\makeatletter

\documentclass[
		a4paper,	%paper size
		oneside,
		%twoside,	%double page
					%may be replaced by oneside
		article, 	%may be relaced by "book"
		%danish,
		english,
					%the last language is standard
					%remove Danish in order to put tikzcd
					%figures in the margins
		%twocolumn,
		% openany, % removing the rule that a new chapter always open on a recto page
		10pt,
		leqno,
		]{memoir}

\newif\ifprenumber % if theorems should use format number. type
\prenumbertrue

% Languages, character encoding etc.

\ifluatex
	\usepackage{
		fontspec, 		%for opentype fonts
		babel,			% in the future, relace by 
						% polyglossia
		lualatex-math,	%fixes for math in latex
		}
	%\usepackage[babelshorthands=true]{polyglossia}
	%\setmainlanguage{danish} &for polyglossia
	%\setotherlanguage{english}
	\setmainfont[Ligatures=TeX]{XITS}
\else
	\ifxetex
		\usepackage{fontspec}
			%for opentype fonts
		\usepackage[babelshorthands=true]{polyglossia}
		}
		\setmainlanguage{danish}
		\setotherlanguage{english}
		\setmainfont[Ligatures=TeX]{XITS}
	\else
		\usepackage[utf8]{inputenc} %for using Unicode
		\usepackage[T1]{fontenc} 	%European fonts
		\usepackage{babel}
		\usepackage[
			noDcommand,
			slantedGreeks,
			%rmx,
			]{kpfonts}
		\begin{comment}
		\makeatletter
		\let\kp@lhook\lhook
		\renewcommand{\lhook}{\mathrel{\text{\fix@lhook}}}
		\newcommand\fix@lhook{%
		  \raisebox{.045ex}{%
		    \scalebox{.85}{$\m@th\kp@lhook$}%
		  }%
		}
		\makeatother
		\end{comment}
		
		\usepackage{newunicodechar}
		
		\def\totheminusone¹{^{-1}}
		
		\newunicodechar{⁻}{\totheminusone}
	\fi
\fi

\frenchspacing

\title{Homotopy limits in the category of dg-categories in terms of \( \mathup{A}_{\infty} \)-comodules}
%\date{\today}
\date{\today}
%\date{}
\author{Sergey Arkhipov and Sebastian Ørsted}

%\title{A simple construction of\\ homotopy limits and homotopy ends\\ in model categories}
%\date{\today}
%\author{Sergey Arkhipov and Sebastian Ørsted}

\newcommand\parsepdfdata{
	\hypersetup{
		pdfauthor={Sergey Arkhipov and Sebastian Ørsted},
		pdftitle={Homotopy limits in the category dg-categories in terms of A\_∞-comodules},
		%pdfsubject={},
		%pdfkeywords={},
		%pdfproducer={Latex with hyperref, or other system},
		%pdfcreator={pdflatex, or other tool},
	}
}

\usepackage{
	%amsmath,	%is loaded by mathtools
	%amssymb,	%symbols for math; removed because kpfonts loads the same. Do the same with unicode-math
	graphicx, 	%importing graphics
	mathtools, 	%extension of amsmath, enabling commands like \abs, \norm
	etoolbox,	%programming facilities,
				%is also loaded by csquotes
	textcomp,	%latex symbols; part of standard latex
%	wrapfig,
	microtype, 	%microtypography
%	bm,			%artificial bold
}
\usepackage{
	lettrine,
	xspace,
	xparse,
	}

\usepackage[shortlabels]{enumitem}%control lists

\usepackage[draft]{fixme}

%\makeatother %%---think about removing this!

%\newcommand*{\yesnumber}{\refstepcounter{equation}\tag{\theequation}}

\usepackage[autostyle,german=guillemets,english=american]{csquotes}
\DeclareQuoteStyle[myguillemotsstyle]{danish}
{\guillemotright}{\guillemotleft}
	%primary quotes: »...«;
	%replace by {\textquotedblleft}{\textquotedblright}
	%for “...”
{\textquoteleft}{\textquoteright} %secondary quotes: ‘...’

\ExecuteQuoteOptions{danish=myguillemotsstyle}

\MakeAutoQuote{“}{”}

\MakeAutoQuote{‘}{’}

\MakeAutoQuote{»}{«}

\usepackage{tikz}

\usetikzlibrary{babel}

\usetikzlibrary{cd}

\tikzcdset{
  %arrow style=math font,
  arrow style=tikz,
}

\tikzset{
  %>/.tip={Stealth[length=2.7pt, width=4.4pt, inset=1.8pt]},
  >/.tip={Stealth[length=2.9pt, width=4.4pt, inset=1.8pt]},
  tikzcd left hook/.tip={Hooks[
	  left,
	  length=2pt,
	  width=5.5pt,
	]},
  iso/.style={
    every to/.append style={
      edge node={
        node [sloped, allow upside down]{
          \raisebox{0.1em}[0pt][0pt]{\ensuremath{\sim}}
        }
      }
    }
  },
  iso'/.style={
    every to/.append style={
      edge node={
        node [sloped, allow upside down]{
          \raisebox{-0.6em}[0pt][0pt]{\ensuremath{\sim}}
        }
      }
    }
  },
  symbol/.style={
      draw=none,
      every to/.append style={
        edge node={node [sloped, allow upside down, auto=false]{$#1$}}}
  },
}

\newcommand\pushout{\arrow[ul,phantom,"\lrcorner",very near start]}
\newcommand\pullback{\arrow[dr,phantom,"\ulcorner",very near start]}

\newcommand\invdot{\mathrlap{.}}
\newcommand\invcomma{\mathrlap{,}}

%BibLaTeX:
\usepackage[backend=biber,citestyle=authoryear-ibid,bibstyle=authoryear,
	uniquename=init,giveninits=true,autolang=hyphen]{biblatex}
\addbibresource{_references/_references_1.bib} %the reference file
\nobibintoc %% Removes bibliography from the pable of contents.

%\nocite{*} %uncomment to include ALL bibliography entries in the printed bibliography, whether they are being used or not.

%\input{_preamble/index/_index_1}

	%Theorem environments:
\usepackage[thmmarks,amsmath]{ntheorem}

% ntheorem configuration:

\input{_preamble/ntheorem/ntheorem_config/_ntheorem_config_1}

\newcommand\danishtheorems{
	\def\@nthm@theorem{Sætning}
	\def\@nthm@proposition{Proposition}
	\def\@nthm@corollary{Korollar}
	\def\@nthm@lemma{Lemma}
	\def\@nthm@claim{Påstand}
	\def\@nthm@definition{Definition}
	\def\@nthm@example{Eksempel}
	\def\@nthm@exercise{Opgave}
	\def\@nthm@remark{Bemærkning}
	\def\@nthm@proof{Bevis}
}

\newcommand\englishtheorems{
	\def\@nthm@theorem{Theorem}
	\def\@nthm@proposition{Proposition}
	\def\@nthm@corollary{Corollary}
	\def\@nthm@lemma{Lemma}
	\def\@nthm@claim{Claim}
	\def\@nthm@definition{Definition}
	\def\@nthm@example{Example}
	\def\@nthm@exercise{Exercise}
	\def\@nthm@remark{Remark}
	\def\@nthm@proof{Proof}
}

\englishtheorems

\theoremseparator{.}

\theorembodyfont{\normalfont}

\ifprenumber
	\theoremstyle{notitle}
\else
	\theoremstyle{postnumpara}
\fi

\newtheorem{para}[equation]{}

\ifprenumber
	\theoremstyle{prenumber}
\else
	\theoremstyle{customplain}
\fi

\theorembodyfont{\itshape}

\newtheorem{theorem}[equation]{\@nthm@theorem}
\newtheorem{corollary}[equation]{\@nthm@corollary}
\newtheorem{proposition}[equation]{\@nthm@proposition}
\newtheorem{lemma}[equation]{\@nthm@lemma}

\theorembodyfont{\normalfont}

\theoremsymbol{\ensuremath{\triangle}}
\newtheorem{remark}[equation]{\@nthm@remark}

\theoremsymbol{\ensuremath{\scriptstyle\bigcirc}}
\newtheorem{example}[equation]{\@nthm@example}

\theoremstyle{notitle}

\theoremstyle{nonumberplainflex}
\theoremsymbol{\ensuremath{\square}}
\theoremheaderfont{\itshape\bfseries}
\newtheorem{proof}{\@nthm@proof}

%Unnumbered variants

\theoremstyle{nonumberplainflex}
\theoremheaderfont{\normalfont\bfseries}
\theoremsymbol{}

\theorembodyfont{\normalfont}

\theorembodyfont{\itshape}
\newtheorem{theoremnonumber}{\@nthm@theorem}

\theorembodyfont{\normalfont}

\theoremsymbol{\ensuremath{\triangle}}

\theoremsymbol{\ensuremath{\scriptstyle\bigcirc}}

%Break variants with line break right after the heading

\theoremsymbol{}
\theoremheaderfont{\normalfont\bfseries}

\ifprenumber
	\theoremstyle{notitlebreak}
\else
	\theoremstyle{postnumparabreak}
\fi

\ifprenumber
	\theoremstyle{prenumberbreak}
\else
	\theoremstyle{custombreak}
\fi

\theorembodyfont{\itshape}
\newtheorem{theorembreak}[equation]{\@nthm@theorem}
\newtheorem{corollarybreak}[equation]{\@nthm@corollary}
\newtheorem{propositionbreak}[equation]{\@nthm@proposition}

\theorembodyfont{\normalfont}

\theoremsymbol{\ensuremath{\triangle}}

\theoremsymbol{\ensuremath{\scriptstyle\bigcirc}}

\theoremsymbol{}

\theoremstyle{notitlebreak}

\theoremstyle{nonumberbreakflex}
\theoremsymbol{\ensuremath{\square}}
\theoremheaderfont{\itshape\bfseries}

% Unnumbered break variants:

\theoremstyle{nonumberbreakflex}
\theoremheaderfont{\normalfont\bfseries}
\theoremsymbol{}
\theorembodyfont{\normalfont}

\theorembodyfont{\itshape}

\theorembodyfont{\normalfont}

\theoremsymbol{\ensuremath{\triangle}}

\theoremsymbol{\ensuremath{\scriptstyle\bigcirc}}

	% Provides the \vref command, for writing e.g. "Chapter~5 on page~123"
\usepackage{varioref}
	
	% Links at references. The "hidelinks" option makes the links colour-neutral
\usepackage[hidelinks]{hyperref}
	
	%====================================================================
%|		CLEVEREF - dynamic cross-references                         |
%====================================================================
% The infamous cleveref package

\usepackage[nameinlink]{cleveref}

	\crefname{para}{}{}
\crefname{theorem}{Theorem}{Theorems}
\crefname{definition}{Definition}{Definitions}
\crefname{example}{Example}{Examples}
\crefname{corollary}{Corollary}{Corollaries}
\crefname{proposition}{Proposition}{Propositions}
\crefname{lemma}{Lemma}{Lemmata}
\crefname{remark}{Remark}{Remarks}
\crefname{claim}{Claim}{Claims}
\crefname{exercise}{Exercise}{Exercises}
\crefname{textexercise}{Exercise}{Ecercises}

\crefalias{parabreak}{para}

\crefalias{theorembreak}{theorem}

\crefalias{definitionbreak}{definition}

\crefalias{examplebreak}{example}

\crefalias{corollarybreak}{corollary}

\crefalias{propositionbreak}{proposition}

\crefalias{lemmabreak}{lemma}

\crefalias{claimbreak}{claim}

\crefalias{remarkbreak}{remark}

\crefalias{textexercisebreak}{textexercise}

 %gives e.g. "table~1--2" instead of "table 1 to~2"

%====================================================================
%|		End of setup of CLEVEREF									|
%====================================================================
	
	% Now to define environments for theorem parts

\newlist{paralist}{enumerate}{2}
\setlist[paralist]{
	label=\textup{(\roman*)},
	ref=\thepara\textup{(\roman*)},
	resume,
}
\crefalias{paralisti}{theorem}

\newlist{theoremlist}{enumerate}{2}
\setlist[theoremlist]{
	label=\textup{(\roman*)},
	ref=\thepara\textup{(\roman*)},
	resume,
}
\crefalias{theoremlisti}{theorem}

\newlist{corollarylist}{enumerate}{2}
\setlist[corollarylist]{
	label=\textup{(\roman*)},
	ref=\thepara\textup{(\roman*)},
	resume,
}
\crefalias{corollarylisti}{corollary}

\newlist{propositionlist}{enumerate}{2}
\setlist[propositionlist]{
	label=\textup{(\roman*)},
	ref=\thepara\textup{(\roman*)},
	resume,
}
\crefalias{propositionlisti}{proposition}

\newlist{lemmalist}{enumerate}{2}
\setlist[lemmalist]{
	label=\textup{(\roman*)},
	ref=\thepara\textup{(\roman*)},
	resume,
}
\crefalias{lemmalisti}{lemma}

\newlist{definitionlist}{enumerate}{2}
\setlist[definitionlist]{
	label=\textup{(\roman*)},
	ref=\thepara\textup{(\roman*)},
	resume,
}
\crefalias{definitionlisti}{definition}

\newlist{textexerciselist}{enumerate}{2}
\setlist[textexerciselist]{
	label=\textup{(\roman*)},
	ref=\thepara\textup{(\roman*)},
	resume,
}
\crefalias{textexerciselisti}{textexercise}

\newlist{remarklist}{enumerate}{2}
\setlist[remarklist]{
	label=\textup{(\roman*)},
	ref=\thepara\textup{(\roman*)},
	resume,
}
\crefalias{remarklisti}{remark}

\newlist{examplelist}{enumerate}{2}
\setlist[examplelist]{
	label=\textup{(\roman*)},
	ref=\thepara\textup{(\roman*)},
	resume,
}
\crefalias{examplelisti}{example}

\newlist{exerciselist}{enumerate}{2}
\setlist[exerciselist]{
	label=\textup{(\roman*)},
	ref=\thepara\textup{(\roman*)},
	resume,
}
\crefalias{exerciselisti}{exercise}

\newlist{claimlist}{enumerate}{2}
\setlist[claimlist]{
	label=\textup{(\roman*)},
	ref=\thepara\textup{(\roman*)},
	resume,
}
\crefalias{claimlisti}{claim}

% The \localref command, allowing references to, say, part (ii) of a theorem without having the theorem number in front; this happens by default due to the above settings

%\makeatletter
\newcounter{localreftmpcnt} %
\newcommand\alphsubformat[1]{\textup{(\roman{#1})}} %adapt ....
\newcommand\localref[2][\alphsubformat]{%
	\ifcsname r@#2@cref\endcsname
	\cref@getcounter {#2}{\mylabel}%
	\setcounter{localreftmpcnt}{\mylabel}%
	\hyperref[#2]{%
		\alphsubformat{localreftmpcnt}%
	}%
	\else ?? \fi}   
%\makeatother
	
	\parsepdfdata 	% parse the command from the top
					% of the document, setting
					% up PDF meta information

\input{_commands/_commands_1}

%\usepackage{pagecolor}

%IM A change has been made in catlimits. It shows the existence of a
%IM kind of bug in automath

%IM Fixed the definition of { prime } in automath. Must do the same with ibullet and idoublebullet.

%IM Removed \!\!\!\! in catlimits

%IM Made a change to the definition of A_{\infty}, e.g. A_{\infty}-fun
%IM became A_{\infty}Fun

%IM BUG!! argwithoutkeyval does not check if argument is empty

%IM Modify keyvals in argument to make equality signs allowed,
%IM in order not to break when using keyval syntax inside the arguments;
%IM only execute code if keyval is recognized, otherwise print
%IM the keyval interface.

%IM Changed the shift key from [{keyvalue}] to {[keyvalue]}
%IM Still, \alg{\dgcatA
%			\tens[\setobjE]
%			\dgcatA
%			\tens[\setobjE]
%			\cdots
%			\tens[\setobjE]
%			\dgcatA}[shift=-n]
%IM throws a bug, as the shift becomes a subscript

%IM  BUUUG!! \catiso{n}[parent=alg,linearize=k ]
%IM causes a loop, for unknown reasons.
%IM Back then, the definition was
%IM \newcosimplicial\catiso{\categoryformat{Iso}}
%IM (no noexpand)

%IM BUUUG!! \geo[dg]{ \eerie[comb]{n} }[parent=alg,h0] throws
%IM an error for some reason.

%IM BUUUG! 		\catainftyfun[shift=1-i] yields an error.

%\numberwithin{equation}{chapter} %MAY change to section

\numberwithin{equation}{section}

\setupclass{category}{output=algmap}

\renewcommand\categoryformat[1]{\textup{\textsf{#1}}}
\newcommand\categoryformatmath[1]{\mathsf{#1}}

\newcommand\adjunctionarrow{\rightleftarrows}

\setupclass{var}{
	singlekeys={
		{smash}{return,command=\noexpand\smash},
		{limits}{command=\mathop,return,symbolputright=\limits},
		{bin}{command=\mathbin},
		{operator}{command=\mathop},
		{reduce}{command=\noexpand\overline},
		{op}{upper=\mathup{op}},
		{*res}{upper=*,nopar,output=map},
		{!kan}{lower=!,nopar,output=map},
		{*kan}{lower=*,nopar,output=map},
		{qpull}{pull,nopar,output=algmap},
		{qpush}{push,nopar,output=algmap},
		{q!}{lower=!,nopar,output=algmap},
		{dg}{symbolputleft=\noexpand\categoryformat{dg}},
		{gr}{symbolputleft=\noexpand\categoryformat{gr}},
		{graded}{upper=\noexpand\categoryformat{gr}},
		{star}{return,symbolputleft=\noexpand\categoryformatmath{*}},
		{nonunital}{lower=\noexpand\categoryformat{nu}},
		{nu}{nonunital},
		{noncounital}{lower=\noexpand\categoryformat{ncu}},
		{ncu}{noncounital},
		{homotopycounital}{cupper=\noexpand\categoryformat{hcu}},
		{hcu}{homotopycounital},
		{formal}{cupper=\noexpand\categoryformat{formal}},
		{dg}{i=\noexpand\vphantom{a}\noexpand\smash{\mathup{dg}},output=alg},
		{hc}{i=\mathup{hc},output=algmap},
		{top}{i=\mathup{top},output=classset},
		{bar}{command=\noexpand\widebar},
		{dgbar}{dg,bar},
		{fat}{i=\noexpand\vphantom{a}\noexpand\smash{\mathup{fat}}},
		{rep}{command=\widetilde,ifoutput=true,outputoptions={smash}},
		{z0}{command=\Zdgcat},
		{h0}{command=\Hdgcat},
		{proj}{modelstructure={\modelstructureproj}},
		{inj}{modelstructure={\modelstructureinj}},
		{reedy}{modelstructure={\modelstructurereedy}},
		{reedydirect}{lower=+},
		{reedyinverse}{lower=-},
		{reedy+}{reedydirect},
		{reedy-}{reedyinverse},
		{lgrade*}{lgrade=\noexpand\smallbullet},
		{rgrade*}{rgrade=\noexpand\smallbullet},
		{red}{command=\noexpand\overline},
%		{shift}{symbolputright={{{{{{{{{[\keyvalue]}}}}}}}}}},
		{aug}{lower=\categoryformatmath{aug}},
		{coaug}{lower=\categoryformatmath{coaug}},
		{augment}{upper=+},
		{cocomplete}{upper=\noexpand\categoryformatmath{c}},
		{comor}{upper=\#},
		{boundary}{symbolputleft=\partial},
	},
	keyvals={
		{powering}{upper={\keyvalue}},
		{latching}{symbolputleft=L_{\keyvalue}},
		{latch}{latching={\keyvalue}},
		{matching}{symbolputleft=M_{\keyvalue}},
		{match}{matching={\keyvalue}},
		{tens}{upper={\tens\keyvalue}},
		{rmod}{symbolputright={/\keyvalue}},
		{lmod}{symbolputleft={\keyvalue\backslash}},
		{mod}{rmod={\keyvalue}},
		{rrmod}{symbolputright={/\!\!/\keyvalue}},
		{llmod}{symbolputleft={\keyvalue\backslash\!\!\!\backslash}},
		{leftrestrict}{symbolputleft={{}_{\keyvalue}}},
		{rightrestrict}{lower={\keyvalue}},
		{leftres}{leftrestrict={\keyvalue}},
		{rightres}{rightrestrict={\keyvalue}},
		{shift}{return,symbolputright={{}\lbrack\keyvalue\rbrack}},
		%IM {} is currently necessary in order not to activate
		%IM keyval interface of anything that might be inside
		%IM the class command.
		{weirdindex}{lower={(\keyvalue)}},
		{iterate}{upper={(\keyvalue)}},
		{over}{symbolputright={/\keyvalue}},
		{res}{return,symbolputright=\vert,lower={\keyvalue}},
		{to}{upper={\keyvalue}},
		{lgrade}{symbolputleft={}_{\keyvalue}},
		{rgrade}{lower={\keyvalue}},
		{setobj}{lower={\keyvalue}},
		{linearize}{symbolputleft={\keyvalue}},
		{modelstructure}{lower={\keyvalue}},
		{commacat}{return,symbolputright={/\keyvalue}},
		{lin}{linearize={\keyvalue}},
		{shift}{return,symbolputright={{}{}{}{}\lbrack\keyvalue\rbrack}},
		{equivar}{upper={\keyvalue}},
		{dom}{symbolputleft={\keyvalue\backslash}},
	},
	output=var,
}

\setupclass{homset}{
	singlekeys={{enrich}{command=\noexpand\underline}},
}

\newvar\valpha{\alpha}
\newvar\vbeta{\beta}
\newvar\vgamma{\gamma}
\newvar\va{a}
\newvar\vb{b}
\newvar\vc{c}
\newvar\vd{d}
\newvar\ve{e}
\newvar\vf{f}
\newvar\vg{g}
\newvar\vh{h}
\newvar\vv{v}
\newvar\vA{A}
\newvar\vB{B}
\newvar\vC{C}
\newvar\vD{D}
\newvar\vE{E}
\newvar\vX{X}
\newvar\vY{Y}
\newvar\vZ{Z}
\newvar\vR{R}
\newvar\vS{S}

\newmap\miota{\iota}
\newmap\diagonalmap{\Delta}
\newmap\mvarphi{\varphi}
\newmap\malpha{\alpha}
\newmap\mbeta{\beta}
\newmap\mtheta{\theta}
\newmap\mpsi{\psi}
\newmap\meta{\eta}
\newmap\mepsilon{\epsilon}
\newmap\ms{s}
\newmap\momega{\omega}
\newmap\mr{r}
\newmap\mW{W}
\newmap\comult{\Delta}
\newmap\counit{\varepsilon}
\newmap\coaug{\eta}
\newmap\aug{\varepsilon}
\newmap\ca{\mathup{ca}}
\newmap\mult{m}
\newmap\mR{R}
\newmap\mS{S}
\newmap\mT{T}
\newmap\mpr{\mathup{pr}}
\newmap\mpi{\pi}

\newclassset\setA{A}
\newclassset\setB{B}
\newclassset\setC{C}
\newclassset\setD{D}
\newclassset\setX{X}
\newclassset\setY{Y}
\newclassset\setZ{Z}
\newclassset\setU{U}
\newclassset\setV{V}
\newclassset\setW{W}
\newclassset\setS{S}
\newclassset\setT{T}
\newclassset\setP{P}

\newalg\algG{G}
\newalg\algH{H}
\newalg\algA{A}
\newalg\algB{B}
\newalg\algC{C}
\newalg\algM{M}
\newalg\algN{N}
\newalg\algP{P}
\newalg\algV{V}
\newalg\algW{W}
\newalg\algU{U}
\newalg\algK{K}

\newalg\fieldk{k}
\newalg\ringk{k}

\setupclass{cohomology}{
	parent=alg,
	singlekeys={
		{hochschield}{symbolputleft={H\!}},
		{hoch}{hochschield},
	},
}

\newmapclass{algmap}[
	parent=alg,output=algmap,
]

\newalgmap\tensoralg{T}[
	singlekeys={
		{red}{command=\noexpand\widebar},
	},
	output=complex,
]
\newalgmap\tensorcoalg{T}[parent=tensoralg,upper=c]

\setupobject\Bar{
	output=alg,
	singlekeys={
		{red}{command=\noexpand\overline},
		{plus}{upper=+},
	},
}

\setupobject\Cob{parent=Bar}

\newmap\difforms{\Omega}

\newalgmap\cone{C}

\newcategory\catqcoh{QCoh}

\newcategory\catcoh{Coh}

\newcategory\catdgqcoh{\textbf{QCoh}}
\newcategory\catdgcoh{\textbf{Coh}}

\newcategory\catdesc{Desc}

\setupobject\catdgalg{
	singlekeys={
		{le0}{upper={\le0}},
		{ge0}{upper={\ge0}},
		{com}{lower=\categoryformatmath{com}},
	},
}

\setupobject\der{
	singlekeys={
		{dg}{lower=\mathup{dg}},
	},
}

\newcategory\cathot{Hot}

\newcategory\catquiv{Quiv}[
	argsinglekeys={
		{E}{\setobjE},
		{F}{\setobjF},
	},
]

\newcategory\catgrquiv{grQuiv}[parent=catquiv]
\newcategory\catdgquiv{dgQuiv}[parent=catquiv]
\newcategory\catstquiv{\cocatstar Quiv}[parent=catquiv]

\newcategory\catsch{Sch}

\newmap\groupoidinv{\iota}

\newcategory\catstalg{\cocatstar Alg}
\newcategory\catgralg{grAlg}
\newcategory\catcoalg{Coalg}
\newcategory\catstcoalg{\cocatstar Coalg}[cocomplete]
\newcategory\catgrcoalg{grCoalg}[cocomplete]
\newcategory\catdgcoalg{dgCoalg}[cocomplete]

\newcategory\catstcoalgnoc{\cocatstar Coalg}

\newcommand\ainfty{\ensuremath{\mathup{A}_{\infty}}}
\newcommand\ainftysf{\ensuremath{\mathsf{A}_{\infty}}}

\newcategory\catcocat{Cocat}[upper=\categoryformatmath{c}]
\newcategory\catdgcocat{dgCocat}[parent=catcocat]
\newcategory\catgrcocat{grCocat}[parent=catcocat]
\newcategory\catstcocat{\cocatstar Cocat}[parent=catcocat]

\newcategory\catcofun{Cofun}
\newcommand\cocatstar{\ensuremath{\mathsf{*}}}

\newcategory\catstcofun{\cocatstar Cofun}
\newcategory\catgrcofun{grCofun}
\newcategory\catdgcofun{dgCofun}

\newcategory\catainftycoalg{\( \categoryformatmath{A_{\infty}} \)Coalg}
\newcategory\catainftycocat{\( \categoryformatmath{A_{\infty}} \)Cocat}
\newcategory\catainftycomod{\( \categoryformatmath{A_{\infty}} \)Comod}
\newcategory\catainftycontra{\( \categoryformatmath{A_{\infty}} \)Contra}
\newcategory\cathom{Hom}
\newmap\coder{\operatorname{Coder}}
\newmap\Der{\operatorname{Der}}

\setupclass{category}{
	singlekeys={
		{infty}{lower=\noexpand\mathsf{\infty}},
	},
}

\newcategory\comcoh{Coh}[output=complex]

\newalgmap\catdelta{\mathbf{\Deltaup}}[
	singlekeys={
		{plus}{lower=+},
	},
]

\newcategory\catsset{SSet}
\newcategory\catvarsset{\catset}[diag=\catdelta]

\newcomplexclass{cosimplicial}[
	parent=var,
	gradingpos=upper,
	keyvals={
			{degreedefault}{sd={\keyvalue},ifoutput=true},
			{sk}{symbolputleft={\noexpand\operatorname{Sk}^{\keyvalue}}},
	},
	singlekeys={
			{deltaplus}{symbolputleft={\miota[*res]}},
	},
	output=classset,
]

\newcomplexclass{simplicial}[
	parent=cosimplicial,
	gradingpos=lower,
	singlekeys={
		{realize}{
			return,leftspar=\lvert,rightspar=\rvert,spar,nowoutput=classset,
		},
		{boundary}{
			symbolputleft=\partial,
		},
	},
	output=classset,
]

\newsimplicial\ssetX{X}
\newsimplicial\ssetY{Y}
\newsimplicial\ssetZ{Z}
\newsimplicial\ssetK{K}
\newsimplicial\ssetL{L}

\newsimplicial\ssetGamma{\Gamma}

\newdelim\geo{\lvert}{\rvert}[
	output=classset,
	singlekeys={
		{dg}{ outputoptions={lower=\noexpand\mathup{dg}} },
	},
]

%\newdelim\lie{[}{]}

\newcomplexclass{cosimplicialdgcat}[parent=cosimplicial,output=algop]

\newcosimplicial\simp{\Deltaup}[
	d={\!},iffirstd=true,
	output=simplicial,
]

\newcosimplicial\catiso{Iso}[parent=category]

\newdelim\ordset{[}{]}[
	output=alg,
	singlekeys={
		{plus}{outputoptions={upper=+}},
		{+}{plus},
	},
]

%\newalgmap\catsimp{\operatorname{Simp}}[output=alg]

\newcategory\catsimp{Simp}

\setupclass{algop}{
	keyvals={
		%{argdefault}{argnokeyvals={\keyvalue}},
		{arg}{argwithoutkeyval={\keyvalue}},
	},
}

\newalgop\Prod{\prod}
\newalgop\Coprod{\coprod}
\newalgop\dirsum{\bigoplus}
%IM Also added parent=var to automathoperator
\newautomathoperator\pushout{\smallcoprod}
\newautomathoperator\varpushout{\mathbin{\textstyle\coprod}}
\newautomathoperator\pullback{\smallprod}
\newautomathoperator\varpullback{\mathbin{\textstyle\prod}}
\newautomathoperator\bigpullback{\prod}
\newautomathoperator\varbigpullback{\bigtimes}

\newalgop\Eq{\operatorname{Eq}}[par]
\newalgop\Coeq{\operatorname{Coeq}}[par]

\newalgop\catob{\operatorname{Ob}}
\newvar\catobpoint{\bullet}
\newvar\objectpoint{*}

\newalgmap\unitmap{\eta}
\newdelim\dgcatdeg{\lvert}{\rvert}[output=alg]

\newalgmap\dgcatA{\mathscr{A}}
\newalgmap\dgcatB{\mathscr{B}}
\newalgmap\dgcatC{\mathscr{C}}
\newalgmap\dgcatD{\mathscr{D}}
\newalgmap\dgcatE{\mathscr{E}}
\newalgmap\dgcatH{\mathscr{H}}
\newalgmap\dgcatunit{\mathbf{1}}
\newalgmap\catunit{\mathbf{1}}
\newvar\finalobject{\ast}

\setupobject\catsset{
	singlekeys={
		{quillen}{modelstructure={\modelstructurequillen}},
	},
}

\newclassset\setobjE{\mathbb{E}}
\newclassset\setobjF{\mathbb{F}}
\newsimplex\basisvector{e}

\newcategory\catkEmodkE{\(\ringk[setobj=\setobjE]\)-mod-\(\ringk[setobj=\setobjE]\)}

\newalg\ringkE{\ringk}[setobj=\setobjE]

\newalgmap\catA{\mathscr{A}}
\newalgmap\catB{\mathscr{B}}
\newalgmap\catC{\mathscr{C}}
\newalgmap\catD{\mathscr{D}}
\newalgmap\catE{\mathscr{E}}
\newalgmap\catV{\mathscr{V}}
\newalgmap\catGamma{\Gamma}

\newtuple\innerhom{[}{,}{]}{\ldots}
\newtuple\powering{\{}{,}{\}}{\ldots}

\setupclass{tuple}{
	singlekeys={
		{left}{outputoptions={lower=l}},
		{right}{outputoptions={lower=r}},
	},
}

\newalgop\Zdgcat{Z}[d=0,nopar]
\newalgop\Hdgcat{H}[d=0,nopar]

\newclassset\setgencof{I}
\newclassset\setgentrivcof{J}

\newclassset\setcof{\operatorname{Cof}}
\newclassset\setfib{\operatorname{Fib}}
\newclassset\setweakeqs{\mathscr{W}}

\newvariableclass{modelstructure}[parent=classset]

\newmodelstructure\modelstructureproj{\operatorname{Proj}}
\newmodelstructure\modelstructureinj{\operatorname{Inj}}
\newmodelstructure\modelstructuretab{\operatorname{Tab}}
\newmodelstructure\modelstructurejoyal{\opereratorname{Joyal}}
\newmodelstructure\modelstructurequillen{\operatorname{Quillen}}
\newmodelstructure\modelstructurereedy{\operatorname{Reedy}}

\newcommand\noloc{%
  \nobreak
  \mspace{6mu plus 1mu}
  {:}
  \nonscript\mkern-\thinmuskip
  \mathpunct{}
  \mspace{2mu}
}

\newmapclass{kanextension}[
	parent=algop,
	output=algmap,
]

\newmap\mprojcof{\mathscr{F}}[output=map]
\newmap\mconst{\mathup{const}}

\newkanextension\mLan{\operatorname{Lan}}
\newkanextension\mRan{\operatorname{Ran}}

\setupobject\catdgfun{
	singlekeys={
		{infty}{lower=\noexpand\boldsymbol{\infty}},
		{o}{upper=\noexpand\boldsymbol{\circ}},
	},
}

\newcategory\catstfun{\cocatstar Fun}[parent=catdgfun]
\newcategory\catgrfun{grFun}[parent=catdgfun]

\newcommand\catxstmod[1]{\category{$#1$-\cocatstar mod}}
\newcommand\catxgrmod[1]{\category{$#1$-grmod}}

\newcommand\catstmodx[1]{\category{\cocatstar mod-$#1$}}

\newcommand\catxstmodx[2]{\category{$#1$-\cocatstar mod-$#2$}}
\newcommand\catxgrmodx[2]{\category{$#1$-grmod-$#2$}}

\setupobject\catainftyfun{
	symbol={\categoryformat{\ainftysf{}Fun}},
	singlekeys={
		{o}{upper=\noexpand\boldsymbol{\circ}},
	},
}

\setupobject\Hom{
	singlekeys={
		{int}{symbol=\noexpand\mathop{\noexpand\mathsf{Hom}}},%internal
		{R}{symbolputleft=\noexpand\mathbb{R}},
	},
}

\setupobject\catdgcat{
	singlekeys={
		{le0}{upper=\le0},
		{ge0}{upper=\ge0},
	},
}

\newcategory\catgrcat{grCat}[parent=catdgcat]
\newcategory\catstcat{\cocatstar Cat}[parent=catdgcat]

\newhomset\RHom{\Hom}[parent=Hom,int,return,R]

\newmapclass{replacementfunctor}[
	parent=algmap,
	%singlekeys={
		%{proj}{lower=\mathup{proj}},
		%{inj}{lower=\mathup{inj}},
		%{tab}{lower=\mathup{tab}},
	%},
	singlekeys={
		{deltaplus}{
			lower=\catdelta[plus],
			output=simplicial,
		},
	},
]

\newreplacementfunctor\mcofrep{Q}
\newreplacementfunctor\mfibrep{R}
\newreplacementfunctor\cofrep{Q}[parent=mcofrep]
\newreplacementfunctor\fibrep{R}[parent=mfibrep]

%\newmap{\face}{d}[gradingpos=lower,nopar]

\newmap{\face}{\partial}[
	parent=algmap,
	gradingpos=lower,
	lower={\!},
	keyvals={
		{min}{min,i={\keyvalue}},
		{max}{max,i={\keyvalue}},
	},
	singlekeys={
		{min}{d=0},
		{max}{d=\mathup{max}},
	},
]
\newmap{\degen}{\sigma}[parent=algmap,gradingpos=lower]

\newmap{\coface}{\partial}[
	parent=algmap,gradingpos=upper,lower={\!},
	keyvals={
		{min}{min,spar,upper={\keyvalue}},
		{max}{max,spar,upper={\keyvalue}},
	},
	singlekeys={
		{min}{d=0},
		{max}{d=\mathup{max}},
	},
]
\newmap{\codegen}{\sigma}[parent=algmap,gradingpos=upper]

\newalgop\End{\int}[
	output=alg,
	singlekeys={
		{rder}{symbolputleft={
			\noexpand\mathbb{R}
			\!\!\mathchoice{\!}{}{}{}
		}},
	},
]
\newalgop\Coend{\int}[
	gradingpos=lower,
	output=alg,
	arg={\mathchoice{\!\!\!}{\!}{\!}{\!}},
	iffirstarg=true,
	singlekeys={
		{lder}{symbolputleft={
			\noexpand\mathbb{L}
			\mathchoice{\!\!}{\!}{\!}{\!}
		}},
	},
]

\newsimplicial\simpdgcatB{\dgcatB}[output=algmap,ifoutput=true]
\newcosimplicial\cosimpdgcatA{\dgcatA}[output=algmap,ifoutput=true]

\setupobject\face{upper={\vphantom{*}}}
\setupobject\degen{upper={\vphantom{*}}}

\setupobject\coface{lower={\vphantom{*}}}
\setupobject\codegen{lower={\vphantom{*}}}

\newalgop\Union{\bigcup}[output=classset]
\newalgop\Intersection{\bigcap}[output=classset]

\newalgmap\Map{\operatorname{Map}}

\newalgmap\ssetmap{\operatorname{Map}}[output=simplicial]

\newhomology\tausset{\tau}

\newtuple\barorderedtensor{(}{<}{)}{\cdots}

\newtuple\naturaltensor{\langle}{,}{\rangle}{\ldots}[output=map]

\newclassset\dgschemeX{\mathbf{X}}
\newclassset\dgschemeY{\mathbf{Y}}
\newclassset\dgschemeZ{\mathbf{Z}}
\newclassset\dgschemeU{\mathbf{U}}
\newsheaf\shM{\mathscr{M}}
\newcategory\catdgaff{dgAff}

\usetikzlibrary{patterns}
%\usetikzlibrary{decorations}
\usetikzlibrary{decorations.pathreplacing,decorations.markings}

\tikzset{
  % style to apply some styles to each segment of a path
  on each segment/.style={
    decorate,
    decoration={
      show path construction,
      moveto code={},
      lineto code={
        \path [#1]
        (\tikzinputsegmentfirst) -- (\tikzinputsegmentlast);
      },
      curveto code={
        \path [#1] (\tikzinputsegmentfirst)
        .. controls
        (\tikzinputsegmentsupporta) and (\tikzinputsegmentsupportb)
        ..
        (\tikzinputsegmentlast);
      },
      closepath code={
        \path [#1]
        (\tikzinputsegmentfirst) -- (\tikzinputsegmentlast);
      },
    },
  },
  % style to add an arrow in the middle of a path
  mid arrow/.style={postaction={decorate,decoration={
        markings,
        mark=at position .5 with {\arrow[#1]{stealth}}
      }}},
}

\makeatother

\begin{document}

\maketitle

%\pagecolor{black}
%
%\color{white}

%\input{equivariance/equivariance_400}

\begin{abstract}
	\noindent
	In this paper, we apply an explicit construction
	of a simplicial powering in dg-categories,
	due to \textcite{holstein} and \textcite{holstein_note},
	as well as our own results
	on homotopy ends
	\parencite{ends},
	to obtain an explicit model for the homotopy
	limit of a cosimplicial system of dg-categories.
	We apply this to obtain a model
	for homotopy descent in terms of \ainfty-comodules,
	proving a conjecture by~\textcite{explicit}
	in the process.
\end{abstract}

\chapter{Introduction}

This is a preparatory paper covering homotopical details
needed to define the derived category of \( \dgcatH \)-equivariant
\( \dgcatA \)-dg-modules in the case where \( \dgcatA \)~is a dg-algebra and \( \dgcatH \)~is a dg-Hopf-algebra acting on~\( \dgcatA \).
The example of interest is when \( \setX \)~is a regular, affine scheme and \( \algG \) an algebraic group acting on~\( \setX \),
and where \( \dgcatA = \difforms{\setX} \) and~\( \dgcatH = \difforms{\algG} \), both equipped with the \emph{zero} differential (\emph{not} the de Rham differential).
Compare this with the classical situation where \( \algA = \reg{\setX} \)~is an ordinary algebra and \( \algH = \reg{\algG} \)~is an ordinary Hopf algebra given by the functions on some algebraic group.
%The category of \( \algH \)-equivariant \( \algA \)-modules
%is a candi
Then we may define the category of \( \algH \)-equivariant \( \algA \)-modules by the \emph{homotopy limit}
\[\textstyle
	\catxmod{\algA}[spar,equivar=\algH]
	=
	\invholim[\catdelta]{ \catxmod{ (\algH[tens=n]\tens\algA) } }
\]
with respect to the model structure on categories
described in~\textcite{rezk}.
In the case where \( \setX[dom=\algG] \)~exists
in schemes and the map~\( \setX \to \setX[dom=\algG] \)
is fully faithful, descent theory tells us that 
\( \catxmod{\algA}[spar,equivar=\algH,smash] \)~recovers~\(
	\catqcoh{{\setX[dom=\algG]}}
\).
If \( \setX[dom=\algG] \)~exists only as a stack,
it will instead recover quasi-coherent sheaves on that.

%If the quotient~\( \setX[dom=\algG] \)
%exists in schemes, it will be an augmentation of the simplicial
%scheme
%\[\begin{tikzcd}[sep=scriptsize]
%	\setX[dom=\algG]
%	\ar[r,<-,dashed]
%		&
%			\setX
%			\ar[r,<-,yshift=1.5pt]
%			\ar[r,->]
%			\ar[r,<-,yshift=-1.5pt]
%				& \algG \times \setX
%				\ar[r,<-,yshift=3pt]
%				\ar[r,->,yshift=1.5pt]
%				\ar[r,<-]
%				\ar[r,->,yshift=-1.5pt]
%				\ar[r,<-,yshift=-3pt]
%					& \algG \times \algG \times \setX
%					\ar[r,<-,yshift=4.5pt]
%					\ar[r,->,yshift=3pt]
%					\ar[r,<-,yshift=1.5pt]
%					\ar[r,->]
%					\ar[r,<-,yshift=-1.5pt]
%					\ar[r,->,yshift=-3pt]
%					\ar[r,<-,yshift=-4.5pt]
%						& \cdots\invcomma
%\end{tikzcd}\]
%the simplicial structure being given in the standard way.
%This is exactly the \enquote{descent groupoid}
%corresponding to the map~\( \setX \to \setX[dom=\algG] \).
More generally, if \( \mf \colon \setX \to \setY \)~is an fpqc morphism of schemes, we may consider its \textdef{descent groupoid},
the internal groupoid in schemes~\( \ssetX{1} \rightrightarrows \ssetX{0} \)
given by~\( \ssetX{0} = \setX \)
and~\( \ssetX{1} = \setX \fibre[\setY] \setX \)
(both maps in the fibre product being~\( \mf \)).
We may then consider its classifying space,
the internal Kan complex in schemes given by
\[
	\ssetX{n}
	=
	\underbrace{
		\ssetX{1}
		\fibre[\ssetX{0}]
		\ssetX{1}
		\fibre[\ssetX{0}]
		\cdots
		\fibre[\ssetX{0}]
		\ssetX{1}
	}_{\text{\(n\)~factors}}
	=
	\underbrace{
		\setX
		\fibre[\setY]
		\setX
		\fibre[\setY]
		\cdots
		\fibre[\setY]
		\setX
	}_{\text{\(n+1\)~factors}}
\]
with the usual simplicial structure, the face maps~\(
	\face[d=i]\colon\ssetX{n}\to\ssetX{n-1}
\)
applying~\( \mf \) at the \( i \)th~step,
and the degeneracy maps~\( \degen[d=i]\colon\ssetX{n}\to\ssetX{n+1} \)
inserting the diagonal map at the \( i \)th~step.
Then \( \setY \)~becomes an augmentation of the
simplicial scheme~\( \ssetX{*} \):
\[\begin{tikzcd}[sep=scriptsize]
	\setY
	\ar[r,<-,dashed]
		&
			\ssetX{0}
			\ar[r,<-,yshift=1.5pt]
			\ar[r,->]
			\ar[r,<-,yshift=-1.5pt]
				& \ssetX{1}
				\ar[r,<-,yshift=3pt]
				\ar[r,->,yshift=1.5pt]
				\ar[r,<-]
				\ar[r,->,yshift=-1.5pt]
				\ar[r,<-,yshift=-3pt]
					& \ssetX{2}
					\ar[r,<-,yshift=4.5pt]
					\ar[r,->,yshift=3pt]
					\ar[r,<-,yshift=1.5pt]
					\ar[r,->]
					\ar[r,<-,yshift=-1.5pt]
					\ar[r,->,yshift=-3pt]
					\ar[r,<-,yshift=-4.5pt]
						& \cdots\invdot
\end{tikzcd}\]
Then descent theory tells us that we
recover quasi-coherent sheaves on~\( \setY \)
by gluing quasi-coherent sheaves on~\( \ssetX{0} \)
via gluing data stored in the categories~\( \catqcoh{\ssetX{i}} \)
for~\( i>0 \). This may be formulated by saying that
\( \catqcoh{\setY} \)~is given by the homotopy limit
(see \cref{res:classical_descent_holim}),
\[\textstyle
	\catqcoh{\setY}
	=
	\invholim[\catdelta]{ \catqcoh{\ssetX{*}} }
	.
\]
The homotopy limit is the derived functor of the limit.
It can be roughly formulated as a homotopy-invariant
version of the limit where the usual squares only hold up to correction (in the case of~\( \catcat \), by isomorphisms).
In other words, up to correction,
we have an augmented cosimplicial system of categories
\[\begin{tikzcd}[sep=scriptsize]
	\catqcoh{\setY}
	\ar[r,->,dashed]
		&
			\catqcoh{\ssetX{0}}
			\ar[r,->,yshift=1.5pt]
			\ar[r,<-]
			\ar[r,->,yshift=-1.5pt]
				& \catqcoh{\ssetX{1}}
				\ar[r,->,yshift=3pt]
				\ar[r,<-,yshift=1.5pt]
				\ar[r,->]
				\ar[r,<-,yshift=-1.5pt]
				\ar[r,->,yshift=-3pt]
					& \catqcoh{\ssetX{2}}
					\ar[r,->,yshift=4.5pt]
					\ar[r,<-,yshift=3pt]
					\ar[r,->,yshift=1.5pt]
					\ar[r,<-]
					\ar[r,->,yshift=-1.5pt]
					\ar[r,<-,yshift=-3pt]
					\ar[r,->,yshift=-4.5pt]
						& \cdots\invcomma
\end{tikzcd}\]
where the cosimplicial maps are given by pullbacks of the simplicial
maps.
We notice that \( \invholim[\catdelta]{ \catqcoh{\ssetX{*}} } \)~makes
sense even if the scheme~\( \setY \) does not exist, as
it depends only on the groupoid~\( \ssetX{1} \rightrightarrows \ssetX{0} \).

%
%
%
%Assuming that \( \setX \to \setX[dom=\algG] \)~is faithfully flat,
%descent theory tells us that
%\[
%	\catqcoh
%\]
%
%
%One way to define \( \algH \)-equivariant \( \algA \)-modules
%is to consider the cosimplicial system of algebras
%\( \algA[d=n] = \algH[tens={n}] \tens \algA = \reg{{\algG[to=n]\times\setX}} \) and the corresponding system of Abelian
%categories~\( \catxmod{\algA[d=n]} \). Then we may define the
%category of \( \algH \)-equivariant \( \algA \)-modules by
%the \emph{homotopy limit}
%\[\textstyle
%	\catxmod{\algA}[spar,equivar=\algH]
%	=
%	\invholim[\catdelta]{ \catxmod{ (\algH[tens=n]\tens\algA) } }
%\]
%with respect to the model structure on categories
%described in~\textcite{rezk}.
%
%
%
%In the language of algebraic geometry, this corresponds to descent. Suppose that \( \mf \colon \setX \to \setY \)~is an fpqc morphism of schemes. This leads to the \textdef{descent groupoid}~\( \ssetX{1} \rightrightarrows \ssetX{0} \), where \( \ssetX{0} = \setX \) and~\( \ssetX{1} = \setX \fibre[\setY] \setX \). More generally, we have the classifying space, the internal Kan complex in schemes given by
%\[
%	\ssetX{n}
%	=
%	\underbrace{
%		\ssetX{1}
%		\fibre[\ssetX{0}]
%		\ssetX{1}
%		\fibre[\ssetX{0}]
%		\cdots
%		\fibre[\ssetX{0}]
%		\ssetX{1}
%	}_{\text{\(n\)~factors}}
%	=
%	\underbrace{
%		\setX
%		\fibre[\setY]
%		\setX
%		\fibre[\setY]
%		\cdots
%		\fibre[\setY]
%		\setX
%	}_{\text{\(n+1\)~factors}}
%\]

Alternatively, the pullback and pushforward functors
\[
	\mf[shpull]
	\colon
	\catqcoh{\setY}
	\adjunctionarrow
	\catqcoh{\setX}
	\noloc
	\mf[shpush]
\]
yield a comonad~\(
	\mT = \mf[shpull] \mf[shpush]
	\colon
	\catqcoh{\setX}
	\to
	\catqcoh{\setX}
\).
Then the Barr--Beck theorem tells us that we recover~\( \catqcoh{\setY} \)
as
\[
	\catqcoh{\setY}
	\cong
	\catxcomod{\mT}{\catqcoh{\setX}}
	,
\]
where the \anrhs the category of \( \mT \)-comodules in~\( \catqcoh{\setX} \).
In the affine situation,
we may write \( \setX = \Spec{\algA} \)
and~\( \ssetX{1} = \Spec{\algC} \)
and observe that \( \algC \)~becomes
a coalgebra in~\( \catxmodx{\algA}{\algA} \) with comultiplication~\( \comult = \face[d=1,comor] \colon \algC \to \algC \tens[\algA] \algC \).
Then
~\(
	\catxcomod{\mT}{\catqcoh{\setX}}
	=
	\catxcomod{\algC}{\catxmod{\algA}}
\)
is just the category of \( \algC \)-comodules
in~\( \catxmod{\algA} \).
Again, this is definable using only the data of the groupoid~\( \ssetX{1} \rightrightarrows \ssetX{0} \),
even if \( \setY \)~does not exist (in schemes).

The purpose of this paper is to prove a homotopy version of the equivalence
\[\textstyle
	\invholim[\catdelta]{ \catqcoh{\ssetX{*}} }
	\cong
	\catxcomod{\algC}{\catxmod{\algA}}
\]
for affine dg-schemes.
More precisely, we prove
\begin{theoremnonumber}[\ref{res:A_infty_comodules}. Theorem]
	Suppose that
	\( \dgschemeX[1] \rightrightarrows \dgschemeX[0] \)
	is a groupoid in affine dg-schemes,
%	with
%	source and target maps~\( \face[d=0],\face[d=1] \)
%	both~flat,
	and consider the associated classifying space~\( \dgschemeX[ibullet] \) given by
	\[\textstyle
		\dgschemeX[n]
		=
		\dgschemeX[1]
		\fibre[\dgschemeX[0]]
		\dgschemeX[1]
		\fibre[\dgschemeX[0]]
		\cdots
		\fibre[\dgschemeX[0]]
		\dgschemeX[1]
		.
	\]
	Write~\( \cosimpdgcatA{n} = \dgcatA[\dgschemeX[n],smash] \)
	for the associated cosimplicial system of dg-algebras.
	Let~\( \dgcatA = \cosimpdgcatA{0} \)
	and~\( \dgcatC = \cosimpdgcatA{1} \),
	and note that \( \dgcatC \)~is a
	counital
	coalgebra in~\( \catxdgmodx{\dgcatA}{\dgcatA} \)
	via the map~\(
		\comult
		=
		\face[d=1,comor]
		\colon
		\dgcatC
		\to
		\dgcatC\tens[\dgcatA]\dgcatC
	\).
	Then we have an equivalence of dg-categories
	\[\textstyle
		\invholim[\catdelta]{
			%\der[dg,nopar]{
				\catdgqcoh{ \dgschemeX[ibullet] }
			%}
		}
		\cong
		\catxcomod{\dgcatC}[infty,hcu,formal]{\dgcatA}
		,
	\]
	where the \anrhs denotes the dg-category
	of formal, homotopy-counital \ainfty-comodules
	over~\( \dgcatC \)
	in~\( \catxdgmod{\dgcatA} \).
\end{theoremnonumber}

Much of the inspiration comes from~\textcite{explicit}.
In the process, we prove their Conjecture~1 and generalize their results.

In \cref{chap:classical_descent}, we set up
classical descent theory, including the homotopy limit and Barr--Beck formulations.
In \cref{sec:differential-graded-categories},
we recall differential graded (co)algebras and (co)categories
and their \ainfty-analogues.
Finally, in \cref{chap:holim_dgcat}, we present our main results
on homotopy limits of dg-categories, including the above theorem.

\section*{Acknowledgements}

We would like to thank Edouard Balzin for his interest in the project, his guidance, suggestions, and for reading an early draft of the paper. We also thank Daria Poliakova for stimulating discussions.

\begingroup

\chapter{Classical descent theory}\label{chap:classical_descent}

Suppose that \( \mf \colon \setX \to \setY \)~is an fpqc morphism of schemes. Then descent theory tells us that we
recover quasi-coherent sheaves on~\( \setY \) by gluing quasi-coherent sheaves on~\( \setX \).
Concretely, if we denote by~\( \ssetX{1} \rightrightarrows \ssetX{0} \)
the \ancech groupoid (see the introduction),
%Concretely, consider the
%\textdef{descent groupoid}~\( \ssetX{1} \rightrightarrows \ssetX{0} \)
%given by~\( \ssetX{0} = \setX \) and~\(
%	\ssetX{1} = \setX \fibre[\setY] \setX
%\)
%(both maps in the fibre product being~\( \mf \)),
%and consider its classifying space, the internal Kan complex
%in schemes given by
%\[
%	\ssetX{n}
%	=
%	\underbrace{
%		\ssetX{1}
%		\fibre[\ssetX{0}]
%		\ssetX{1}
%		\fibre[\ssetX{0}]
%		\cdots
%		\fibre[\ssetX{0}]
%		\ssetX{1}
%	}_{\text{\( n \)~factors}}
%	=
%	\underbrace{
%		\setX
%		\fibre[\setY]
%		\setX
%		\fibre[\setY]
%		\cdots
%		\fibre[\setY]
%		\setX
%	}_{\text{\( n + 1 \)~factors}}
%\]
%with the usual simplicial structure, the face maps~\(
%	\face[d=i]\colon\ssetX{n}\to\ssetX{n-1}
%\)
%applying~\( \mf \) at the \( i \)th~step,
%and the degeneracy maps~\( \degen[d=i]\colon\ssetX{n}\to\ssetX{n+1} \)
%inserting the diagonal map at the \( i \)th~step.
then the category~\( \catqcoh{\setY} \)~is equivalent
to the category of “descent data” on the
groupoid~\( \ssetX{*} \), defined in the following manner:

Let~\( \ssetX{1} \rightrightarrows \ssetX{0} \)
be any internal groupoid in the category of schemes,
and denote by~\( \ssetX{*} \)
its classifying space, the internal Kan complex
in schemes given by~\(
	\ssetX{n}
	=
	\ssetX{1}
	\fibre[\ssetX{0}]
	\cdots
	\fibre[\ssetX{0}]
	\ssetX{1}
\).
The category~\( \catdesc{\ssetX{*}} \) of \textdef{descent data} on~\( \ssetX{*} \)
has objects the pairs~\( \tup{\algM,\mtheta} \),
where \( \algM \in \catqcoh{\ssetX{0}} \),
and \(
	\mtheta \colon \face[d=1,shpull]{\algM}\to\face[d=0,shpull]{\algM}
\)
a map satisfying
the cocycle and unit conditions
\[
	\face[d=0,shpull]{\mtheta}
	\circ
	\face[d=2,shpull]{\mtheta}
	=
	\face[d=1,shpull]{\mtheta}
	\qquad\text{and}\qquad
	\degen[d=0,shpull]{\mtheta} = \id
.\]
A morphism~\( \malpha\colon\tup{\algM,\mtheta}\to\tup{\algN,\meta} \)
is a morphism~\( \malpha \colon \algM \to \algN \)
in~\( \catqcoh{\ssetX{0}} \) such that~\(
	\meta\circ\face[d=1,shpull]{\malpha} = \face[d=0,shpull]{\malpha}\circ\mtheta
\).

%the category~\( \catqcoh{\setY} \)
%is equivalent to the category consisting of
%pairs~\( \tup{\algM,\mtheta} \) where
%\( \algM \in \catqcoh{\setX} \) 

\begin{proposition}\label{res:cocycle_iff_degen=0}
	Assuming the cocycle condition,
	the assumption~\( \degen[d=0,shpull]{\mtheta}=\id \)
	is equivalent to~\( \mtheta \) being an isomorphism.
\end{proposition}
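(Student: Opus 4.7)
The plan is to exploit the groupoid inverse~$\groupoidinv \colon \ssetX{1} \to \ssetX{1}$, which swaps the two face maps, i.e., $\face[d=0] \circ \groupoidinv = \face[d=1]$ and $\face[d=1] \circ \groupoidinv = \face[d=0]$. Consequently the pullback $\groupoidinv[shpull]{\mtheta}$ is naturally a morphism $\face[d=0,shpull]{\algM} \to \face[d=1,shpull]{\algM}$ --- precisely the right type to serve as a candidate inverse to~$\mtheta$.

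For the forward direction (unit condition yields isomorphism), I would introduce two maps $\mr, \mr' \colon \ssetX{1} \to \ssetX{2}$ classifying the composable pairs $\tup{g, \groupoidinv(g)}$ and $\tup{\groupoidinv(g), g}$, and pull back the cocycle condition along each. The key simplicial identities to verify are $\face[d=0] \circ \mr = \groupoidinv$, $\face[d=2] \circ \mr = \id$, and $\face[d=1] \circ \mr = \degen[d=0] \circ \face[d=1]$ --- the last one because composing an arrow with its inverse yields an identity arrow --- together with the analogous identities for~$\mr'$. Pulling back the cocycle along~$\mr$ then gives $\groupoidinv[shpull]{\mtheta} \circ \mtheta = \face[d=1,shpull]{\degen[d=0,shpull]{\mtheta}} = \id$, where the unit condition collapses the right-hand side; the analogous computation for~$\mr'$ yields $\mtheta \circ \groupoidinv[shpull]{\mtheta} = \id$.

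For the converse, I would pull the cocycle back along the iterated degeneracy $\degen[d=0] \circ \degen[d=0] \colon \ssetX{0} \to \ssetX{2}$, which corresponds to the doubly degenerate pair $\tup{\id_x, \id_x}$. Using the simplicial identities $\face[d=0] \circ \degen[d=0] = \face[d=1] \circ \degen[d=0] = \id$ and $\face[d=2] \circ \degen[d=0] = \degen[d=0] \circ \face[d=1]$, each of the three terms in the cocycle collapses to $\degen[d=0,shpull]{\mtheta}$, reducing the equation to the idempotency relation $\degen[d=0,shpull]{\mtheta} \circ \degen[d=0,shpull]{\mtheta} = \degen[d=0,shpull]{\mtheta}$. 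Since $\mtheta$ is an isomorphism, so is its pullback $\degen[d=0,shpull]{\mtheta}$, and any idempotent isomorphism is forced to equal the identity.

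I do not expect any step to present a genuine obstacle: the only task demanding mild care is the bookkeeping for the auxiliary maps~$\mr$ and~$\mr'$, making sure the compositions with face and degeneracy maps come out exactly as required. This is a routine check in the nerve of a groupoid, and no deeper categorical or homotopical machinery is needed.
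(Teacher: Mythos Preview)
Your proposal is correct and follows essentially the same approach as the paper: your maps $r$ and $r'$ are precisely the paper's $(\iota\times\id)\Delta$ and $(\id\times\iota)\Delta$, and your idempotency argument via pullback along $\sigma_0\sigma_0$ is exactly the paper's application of $\sigma_0^*\sigma_0^* = \sigma_0^*\sigma_1^*$ to the cocycle condition.
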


\begin{proof}
	If \( \mtheta \)~is an isomorphism,
	we may apply \(
		\degen[d=0,shpull] \degen[d=0,shpull]
		=
		\degen[d=0,shpull] \degen[d=1,shpull]
	\)
	to both sides of the cocycle condition
	and get~\(
		\degen[d=0,shpull]{ \mtheta }
		\circ
		\degen[d=0,shpull]{ \mtheta }
		=
		\degen[d=0,shpull]{ \mtheta }
	\).
	Now \( \degen[d=0,shpull]{ \mtheta } \)~is the image of an isomorphism and hence an isomorphism, so we obtain~\(
		\degen[d=0,shpull]{ \mtheta } = \id
	\).
	Conversely, if~\( \degen[d=0,shpull]{\mtheta} = \id \),
	then we use the groupoid conditions
	\[
		\face[d=1]
		( \miota\times\id )
		\diagonalmap
		=
		\degen[d=0]
		\face[d=1]
		\qquad\text{and}\qquad
		\face[d=1]
		( \id\times\miota )
		\diagonalmap
		=
		\degen[d=0]
		\face[d=0]	
	\]
	where \( \diagonalmap \)~denotes the diagonal map;
	in the first case, it is the diagonal
	map
	\[
		\diagonalmap
		\colon
		\ssetX{1}
		\to
		\ssetX{1}
		\;
		{}_{\face[d=1]}
		\!
		\fibre[limits,\ssetX{0},return,bin]
		{}_{\face[d=1]}
		\;
		\ssetX{1},
	\]
%	\[\begin{tikzcd}
%		\ssetX{1}
%		\ar[r,"\diagonalmap"]
%			& 
%	\end{tikzcd}\]
	the fibre product being taken on both sides with
	respect to the source map~\( \face[d=1] \).
	In the second equation, \( \face[d=0] \)~is used instead.
	We have~\(
		\map{\degen[d=0]\face[d=1]}[spar,shpull]{\mtheta}
		=
		\face[d=1,shpull]{{ \degen[d=0,shpull]{ \mtheta } }}
		= \id,
	\)
	and hence
	\begin{align*}
		\id
		&=
		\map{ ( \miota\times\id ) \diagonalmap }[spar,shpull]{{
			\face[d=1,shpull]{\mtheta}
		}}
		=
		\map{ ( \miota\times\id ) \diagonalmap }[spar,shpull,par]{{
			\face[d=0,shpull]{\mtheta}
			\circ
			\face[d=2,shpull]{\mtheta}
		}}
	\\
		&=
		\map{ \face[d=0] ( \miota \times \id ) \diagonalmap }[spar,qpull]{
			\mtheta
		}
		\circ
		\map{ \face[d=1] ( \miota \times \id ) \diagonalmap }[spar,qpull]{
			\mtheta
		}
		=
		\miota[qpull]{\mtheta}\circ\mtheta
	.
	\end{align*}
	Similarly, the other equation yields \(
		\id = \mtheta\circ\miota[qpull]{\mtheta}
	\).
\end{proof}

\begin{example}
	%Suppose now that \( \ringk \)~is a field
	%and denote by~\( \catsch[over=\ringk] \) the category of schemes
	%over~\( \Spec{\ringk} \),
	%equipped with any of the usual Groethendieck
	%topologies for which \( \catqcoh \)~satisfies descent.
	Suppose that \( \setY \)~is a scheme
	and \( \bigcup \setU[i] \to \setY \)~an fpqc covering,
	and define~\( \setX = \Coprod{\setU[i]} \),
	so that the morphism~\( \mf\colon\setX\to\setY \) is~fpqc.
	Then the descent groupoid is exactly
	the \textdef{\ancech groupoid}
	given by~\( \ssetX{0} = \Coprod{\setU[i]} \)
	and~\( \ssetX{1} = \Coprod{\setU[ij]} \)
	(here, we use the usual convention
	of letting~\(
		\setU[i_0\cdots i_n]
		=
		\setU[i_0]\cap\cdots\cap\setU[i_n]
	\)).
	The source map~\( \face[d=1]\colon\ssetX{1}\to\ssetX{0} \)
	is given by the embeddings~\( \setU[ij]\into\setU[i] \),
	the target map~\( \face[d=0]\colon\ssetX{1}\to\ssetX{0} \)
	is given by the embeddings~\( \setU[ij]\into\setU[j] \),
	the unit~\( \ssetX{0}\to\ssetX{1} \) is given by~\( \setU[i] \xto{=} \setU[ii] \),
	composition~\(
		\face[d=1]
		\colon
		\ssetX{1}
		\fibre[\ssetX{0}]
		\ssetX{1}
		\to
		\ssetX{1}
	\)
	is given by~\( \setU[ijk]\into\setU[ik] \),
	and inversion~\( \groupoidinv\colon\ssetX{1}\to\ssetX{1} \)
	is given by~\( \setU[ij] \xto{=} \setU[ji] \).
	More generally
	\[\textstyle
		\ssetX{n}
		=
		\ssetX{1}
		\fibre[\ssetX{0}]
		\ssetX{1}
		\fibre[\ssetX{0}]
		\cdots
		\fibre[\ssetX{0}]
		\ssetX{1}
		=
		\Coprod{ \setU[i_0 \cdots i_n] }
	.\]
	If \( \mvarphi\colon\ordset{m}\to\ordset{n} \)
	is a map in~\( \catdelta \),
	the map~\( \mvarphi[pull]\colon\ssetX{n}\to\ssetX{m} \)
	is given by the embedding~\(\smash{
		\setU[i_0\cdots i_n]
		\into
		\setU[i_{\mvarphi{0}} \cdots i_{\mvarphi{m}}]
	}\).
	Then a pair \( \tup{\algM,\mtheta}\in\catdesc{\ssetX{*}} \)~consists
	of collections~\( \algM = \tup{\algM[i]} \)
	of quasi-coherent sheaves~\( \algM[i]\in\catqcoh{\setU[i]} \)
	on each element in the covering,
	and collections~\( \mtheta = \tup{\mtheta[ij]} \)
	of gluing morphisms~\(\smash{
		\mtheta[ij]
		\colon
		\algM[i,res=\setU[ij]]
		\to
		\algM[j,res=\setU[ij]]
	}\),
	subject to the cocycle and unit conditions
	\[
		\mtheta[jk]\circ\mtheta[ij]=\mtheta[ik]
		\qquad\text{and}\qquad
		\mtheta[ii] = \id[\algM[i]]
		.
	\]
	As in the general case, this implies
	that all~\( \mtheta[ij] \) are automatically isomorphisms.
	A morphism~\( \malpha\colon\tup{\algM,\mtheta}\to\alg{\algN,\meta} \)
	is a tuple~\( \malpha = \tup{\malpha[i]} \)
	of morphisms~\( \malpha[i]\colon\algM[i]\to\algN[i] \)
	such that~\(\smash{
		\meta[ij]\circ\malpha[i,res=\setU[ij]]
		=
		\malpha[j,res=\setU[ij]]\circ\mtheta[ij]
	}\)
	for all~\( i,j \).
	Then the descent statement from before simply
	translates to the fact that we recover quasi-coherent sheaves on~\( \setY \)
	from these data.
\end{example}

\section{Barr--Beck theorem and comodules}

One classical way of rewriting the descent condition
is via the Barr--Beck theorem.
We state it in the generality we shall need it.
Following \textcite{maclane},
a \textdef{comonad}
on a category~\( \catC \) is an endofunctor~\(
	\mT\colon\catC\to\catC
\)
together with natural transformations~\(
	\comult\colon\mT\to\mT[squared]
\), called \textdef{comultiplication},
and~\(
	\counit\colon\mT\to\id[\catC]
\),
called \textdef{counit},
such that the following diagrams commute:
\[\begin{tikzcd}
	\mT
	\ar[r,"\comult"]
	\ar[d,"\comult"']
		& \mT[squared]
		\ar[d,"\comult\mT"]
			&	& \mT \ar[d,"\comult"] \ar[ld,equal] \ar[rd,equal]
\\
	\mT[squared]
	\ar[r,"\mT\comult"']
		& \mT[to=3]
			&[3em] \mT
				& \mT[squared] \ar[l,"\counit\mT"] \ar[r,"\mT\counit"']
					& \mT
					\invdot
\end{tikzcd}\]
A \textdef{comodule} for a comonad~\( \mT \)
consists of an object~\( \vx \in \catC \)
together with a morphism~\( \ca \colon \vx \to \mT[nopar]{\vx} \),
called \textdef{coaction},
such that the following diagrams commute:
\[\begin{tikzcd}
	\vx \ar[r,"\ca"] \ar[d,"\ca"']
		& \mT[nopar]{\vx} \ar[d,"\comult"]
			&[3em]
			\vx \ar[r,"\ca"] \ar[rd,equal]
				& \mT[nopar]{\vx} \ar[d,"\counit"]
\\
	\mT[nopar]{\vx} \ar[r,"\mT\ca"']
		& \mT[squared,nopar]{\vx}
			&	& \vx \invdot
\end{tikzcd}\]
A map of comodules~\( \mf \colon \tup{\vx,\ca}\to\tup{\vy,\ca} \) is a map commuting with coaction.
We thus obtain a category~\( \catxcomod{\mT} \)
of comodules over~\( \mT \).

\begin{example}
	Any pair of adjoint functors~\(
		\mF \colon \catD\adjunctionarrow\catC\noloc\mG
	\)
	defines a comonad in~\( \catC \)
	by letting~\( \mT = \mF \mG \)
	and defining the comultiplication~\(
		\comult
		\colon
		\mT
		\to
		\mT[squared]
	\)
	by the unit of adjunction~\( \mF\mG\to \mF(\mG\mF)\mG \),
	and the counit by the counit of adjunction.
	We see that any object of the form~\( \mF{\vx} \in\catC \)
	for~\( \vx \in \catD \)
	is a comodule over~\( \mT \)
	via~\( \mF{x} \to \map{\mF (\mG \mF)}{\vx} \).
\end{example}

\begin{theorem}[Barr--Beck theorem]
	Suppose that \( \catC \) and \( \catD \)~are
	Abelian categories and
	\(
		\mF \colon \catD\adjunctionarrow\catC\noloc\mG
	\)~an adjunction with~\( \mF \)~full and exact.
	If~\( \mT = \mF \mG \)~is the associated comonad,
	\( \mF \)~descends to an equivalence
	of Abelian categories~\( \mF \colon \dgcatC \isoto \catxcomod{\mT} \).
\end{theorem}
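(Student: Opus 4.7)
The plan is to apply the comonadic form of Beck's theorem, whose splitness hypotheses collapse in the Abelian, exact setting. Concretely, we build an explicit quasi-inverse to the comparison functor and verify the two natural isomorphisms by diagram chase. Begin by constructing the comparison $\widetilde{\mF}\colon \catD \to \catxcomod{\mT}$ sending an object $\vx$ to the pair $(\mF(\vx), \mF(\meta_\vx))$, where $\meta\colon \id_{\catD} \to \mG\mF$ is the unit of the adjunction; the comodule axioms follow from the triangle identities and the construction of $\comult, \counit$ in the preceding example, and functoriality is immediate from naturality of $\meta$.

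For the inverse $\mpsi\colon \catxcomod{\mT} \to \catD$, given a comodule $(\vy, \ca)$ define $\mpsi(\vy, \ca)$ to be the equalizer of the parallel pair
\[
\mG(\ca),\;\meta_{\mG(\vy)}\colon \mG(\vy) \longrightarrow \mG\mF\mG(\vy),
\]
which exists because $\catD$ is Abelian; functoriality follows from naturality of $\meta$ together with the coaction-compatibility required for morphisms of comodules. To verify $\widetilde{\mF}\mpsi \simeq \id$, use that $\mF$ is exact and hence preserves this equalizer. The coaction $\ca\colon \vy \to \mF\mG(\vy)$ itself equalizes the pair $\mF\mG(\ca),\, \mF(\meta_{\mG(\vy)})$ by coassociativity of $\ca$, producing a comparison map $\vy \to \mF\mpsi(\vy,\ca)$; the counit $\counit_\vy$ furnishes a two-sided inverse via the counit comodule axiom, and compatibility with coactions is a direct check.

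The remaining and main obstacle is to verify $\mpsi\widetilde{\mF} \simeq \id$, which is where fullness of $\mF$ enters decisively. The unit $\meta_\vx\colon \vx \to \mG\mF(\vx)$ equalizes the pair $\mG\mF(\meta_\vx),\, \meta_{\mG\mF(\vx)}$ by naturality of $\meta$ applied to $\meta_\vx$, and hence induces a canonical map $\vx \to \mpsi\widetilde{\mF}(\vx)$. One must show this map is an isomorphism: after applying $\mF$ (which preserves the equalizer), both sides become canonically identified with $\mF(\vx)$ using the triangle identities, so the map becomes an isomorphism in $\catC$; fullness of $\mF$ then lifts this inverse back to $\catD$. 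Finally, $\mF$ is additive as a left adjoint between Abelian categories, so the resulting equivalence automatically promotes to an equivalence of Abelian categories.
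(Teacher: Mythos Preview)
The paper states the Barr--Beck theorem without proof, treating it as a classical result quoted for later use in the descent discussion; there is no argument in the paper to compare your proposal against. Your outline is the standard comonadic Beck proof via the comparison functor and an equalizer-defined quasi-inverse, and the architecture is sound in the Abelian setting.

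One genuine gap: in the last step you invoke fullness of $\mF$ to ``lift this inverse back to $\catD$'', but fullness only supplies a candidate morphism $f$ with $\mF(f)$ equal to the inverse of $\mF$ applied to the unit; to conclude that $f$ is an actual two-sided inverse you need $\mF$ to be \emph{faithful} (equivalently, conservative, given exactness), since $\mF(f\circ\eta)=\id$ must force $f\circ\eta=\id$. The paper's hypothesis ``full and exact'' is almost certainly a slip for ``faithful and exact'', which is the standard Barr--Beck assumption and the one your argument actually requires.
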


Suppose now that \( \mf \colon \setX \to \setY \)~is a faithfully
flat morphism of affine schemes \( \setX = \Spec{\algA} \)
and~\( \setY = \Spec{\algB} \), and consider the pullback square
\[\begin{tikzcd}
	\setX \fibre[\setY] \setX
	\ar[r,"{\face[d=1]}"] \ar[d,"{\face[d=0]}"']
		& \setX \ar[d,"\mf"]
\\
	\setX \ar[r,"\mf"']
		& \setY \invdot
\end{tikzcd}\]
Then base change shows that the comonad
\[
	\mT = \mf[shpull] \mf[shpush]
	\colon
	\catqcoh{\setX}
	\longto
	\catqcoh{\setX},
	\qquad
	\algM
	\longmapsto
	\algA
	\tens[\algB]
	\algM,
\]
is equal to
\[
	\face[d=1,shpush] \face[d=0,shpull]
	\colon
	\catqcoh{\setX}
	\longto
	\catqcoh{\setX},
	\qquad
	\algM
	\longmapsto
	( \algA \tens[\algB] \algA ) \tens[\algA] \algM
.\]
Now \(
	\algC
	:=
	\algA\tens[\algB]\algA
	=
	\reg{\setX\fibre[\setY]\setX}
	=
	\reg{\ssetX{1}}
\)
is naturally a coalgebra in the monoidal category~\( \tup{ \catxmodx{\algA}{\algA} , \tens[\algA] } \)
of \( \algA \)-bimodules
via the comorphism~\(
	\face[d=1,comor]
	\colon
	\algC
	\to
	\algC \tens[\algA] \algC
\)
associated to~\(
	\face[d=1]
	\colon
	\ssetX{1} \fibre[\ssetX{0}] \ssetX{1}
	\to
	\ssetX{1}
\).
The category of \( \mT \)-comodules is
the same as the category~\( \catxcomod{\algC}{\catxmod{\algA}} \)
of \( \algC \)-comodules
in the category~\( \catxmod{\algA} \),
so Barr--Beck becomes the statement that
	
\begin{proposition}
	If \( \mf \colon \setX = \Spec{\algA} \to \setY = \Spec{\algB} \)~is
	faithfully flat, consider the coalgebra~\(
		\algC = \reg{\setX\fibre[\setY]\setX} = \algA \tens[\algB] \algA
	\).
	Then
	\[
		\mf[shpull]
		\colon
		\catqcoh{\setY} = \catxmod{\algB}
		\longisoto
		\catxcomod{\algC}{\catxmod{\algA}}
	\]
	is an equivalence of Abelian categories.
\end{proposition}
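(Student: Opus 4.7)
The plan is to apply the Barr--Beck theorem directly to the adjunction
	\[
		\mf[shpull]
		\colon
		\catqcoh{\setY}
		=
		\catxmod{\algB}
		\adjunctionarrow
		\catxmod{\algA}
		=
		\catqcoh{\setX}
		\noloc
		\mf[shpush],
	\]
	which is extension and restriction of scalars along the ring map $\algB \to \algA$. Faithful flatness of $\algA$ over $\algB$ guarantees that $\mf[shpull]$ is both exact and faithful (and in particular conservative), so the hypotheses of Barr--Beck are met. The theorem then produces an equivalence of Abelian categories
	\[
		\mf[shpull]
		\colon
		\catxmod{\algB}
		\isoto
		\catxcomod{\mT}{\catxmod{\algA}},
	\]
	where $\mT = \mf[shpull]\mf[shpush]$ is the associated comonad on $\catxmod{\algA}$.

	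It then remains to identify $\catxcomod{\mT}{\catxmod{\algA}}$ with $\catxcomod{\algC}{\catxmod{\algA}}$. Applying flat base change to the pullback square defining $\setX \fibre[\setY] \setX$, the functor $\mT$ is naturally isomorphic to $\face[d=1,shpush]\face[d=0,shpull]$, which sends $\algM$ to $\algC \tens[\algA] \algM$. Under this isomorphism the comonad comultiplication $\comult \colon \mT \to \mT[squared]$, induced by the unit of adjunction $\id \to \mf[shpush]\mf[shpull]$, corresponds to the coalgebra comultiplication $\face[d=1,comor] \colon \algC \to \algC \tens[\algA] \algC$; and the counit $\counit \colon \mT \to \id$, induced by the adjunction counit $\mf[shpull]\mf[shpush]\to\id$ given on elements by $a \tens m \mapsto am$, corresponds to the coalgebra counit $\degen[d=0,comor] \colon \algC \to \algA$, $a\tens b\mapsto ab$. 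Consequently the data of a $\mT$-comodule structure on $\algM$ is precisely the data of a $\algC$-coaction making $\algM$ a $\algC$-comodule in $\catxmod{\algA}$, and the two comodule categories coincide.

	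Combining the two identifications gives the equivalence claimed. The only nontrivial step is the comparison between the comonad structure on $\mT$ inherited from the adjunction and the coalgebra structure on $\algC$ inherited from the groupoid $\ssetX{1} \rightrightarrows \ssetX{0}$; this is a direct computation, tracing the unit $m \mapsto 1 \tens m$ through the base change isomorphism $\mf[shpush]\mf[shpull]\algM \cong (\algA \tens[\algB] \algA) \tens[\algA] \algM$. Once this comparison is in place, the result is an immediate consequence of Barr--Beck.
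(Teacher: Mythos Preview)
Your proof is correct and follows exactly the approach the paper takes: the proposition is stated in the paper immediately after the Barr--Beck theorem and the base-change identification of the comonad $\mT = \mf[shpull]\mf[shpush]$ with $\algC\tens[\algA]\slot$, and is introduced with ``so Barr--Beck becomes the statement that\ldots''. Your write-up simply makes that derivation explicit, including the check that the comonad structure maps match the coalgebra structure on~$\algC$.
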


Even if the descent groupoid~\( \ssetX{*} \)
does not come from the covering of a scheme~\( \setY \),
the category of descend data still becomes equivalent to
comodules over the coalgebra~\( \algC = \reg{\ssetX{1}} \).
Suppose that \( \ssetX{1} \rightrightarrows \ssetX{0} \)~is
an internal groupoid in the category of
\emph{affine} schemes.
%, then \( \algC = \reg{\ssetX{1}} \)
%naturally becomes a coalgebra in the monoidal
%category~\( \tup{\catxmodx{\algA}{\algA},\tens[\algA]} \)
%of \( \algA \)-bimodules
%via the comultiplication map~\(
%	\face[d=1,comor]
%	\colon
%	\algC
%	\to
%	\algC \tens[\algA] \algC
%\).
%Furthermore, by
By
adjunction, the gluing map~\(
	\mtheta
	\colon
	\face[d=1,shpull]{\algM}
	\to
	\face[d=0,shpull]{\algM}
\)
is equivalent to a map~\(
	\ca
	\colon
	\algM
	\to
	\face[d=1,shpush]{{ \face[d=0,shpull]{ \algM } }}
	=
	\algC \tens[\algA] \algM
\).
We claim that this operation makes \( \algM \)~into a
comodule over~\( \algC \) in the category~\( \catxmod{\algA} \).

\begin{proposition}\label{res:descent=comodules}
	If \( \ssetX{1} \rightrightarrows \ssetX{0} \)
	is an internal groupoid in \emph{affine} schemes,
	then we have an equivalence of categories~\( \catdesc{\ssetX{*}} \cong \catxcomod{\algC}{\catxmod{\algA}} \),
	where the \anrhs denotes the category of \( \algC \)-comodules
	in~\( \catxmod{\algA} \).
\end{proposition}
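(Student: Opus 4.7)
The plan is to set up the equivalence via the adjunction $\face[d=1,shpull] \dashv \face[d=1,shpush]$ along the source map, following the same template as the Barr--Beck description preceding the proposition. Given $\tup{\algM, \mtheta} \in \catdesc{\ssetX{*}}$, the gluing morphism $\mtheta \colon \face[d=1,shpull]{\algM} \to \face[d=0,shpull]{\algM}$ corresponds under this adjunction to the composite
\[
	\ca \colon \algM \longto \face[d=1,shpush]{\face[d=1,shpull]{\algM}} \xto{\face[d=1,shpush]{\mtheta}} \face[d=1,shpush]{\face[d=0,shpull]{\algM}} = \algC \tens[\algA] \algM,
\]
where the final equality views $\algC$ as an $\algA$-bimodule with left action from $\face[d=1,comor]$ and right action from $\face[d=0,comor]$; in particular $\ca$ is a morphism in $\catxmod{\algA}$.

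The next step is to show that the descent axioms on $\mtheta$ translate to the comodule axioms on $\ca$. The groupoid composition $\face[d=1]\colon\ssetX{2}\to\ssetX{1}$ induces the comultiplication $\comult = \face[d=1,comor] \colon \algC \to \algC \tens[\algA] \algC$, while the unit $\degen[d=0]\colon\ssetX{0}\to\ssetX{1}$ induces the counit $\counit = \degen[d=0,comor] \colon \algC \to \algA$. Applying $\face[d=1,shpush]\face[d=1,shpush]$ to the cocycle equation and using base change on $\ssetX{2} \cong \ssetX{1} \fibre[\ssetX{0}] \ssetX{1}$ to identify the result with a diagram in $\algC \tens[\algA] \algC \tens[\algA] \algM$, the equation $\face[d=0,shpull]{\mtheta} \circ \face[d=2,shpull]{\mtheta} = \face[d=1,shpull]{\mtheta}$ becomes precisely the coassociativity square $(\comult \tens \id)\ca = (\id \tens \ca)\ca$. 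Similarly, the unit condition $\degen[d=0,shpull]{\mtheta} = \id$ translates, via pushforward along $\degen[d=0]$, to $(\counit \tens \id)\ca = \id$.

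Functoriality in morphisms of descent data and the reverse direction both follow directly from the adjunction bijection on hom-sets, establishing the equivalence. The main obstacle will be the bookkeeping in the second step: one must carefully track the various left and right $\algA$-actions on $\algC$ and on $\algC \tens[\algA] \algC$ (coming from the four iterated face maps $\ssetX{2} \to \ssetX{0}$), verify the Beck--Chevalley isomorphisms identifying successive pushforwards of pullbacks with iterated tensor products, and check that the simplicial identities of the groupoid encode the coassociativity diagram in the correct orientation rather than a twisted variant.
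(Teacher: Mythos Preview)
Your proposal is correct and follows essentially the same approach as the paper: set up the coaction as the adjunct of $\mtheta$, then translate the cocycle and unit conditions into coassociativity and counitality via base change along the pullback square for $\ssetX{2}$. The paper packages the ``bookkeeping'' you flag into two reusable lemmas (\cref{res:base_change_face_maps} and \cref{res:add_comultiplication_map}) that track how iterated push--pulls along $\face[max]$ and $\face[min]$ interact, which it later reuses for the dg-version; otherwise the argument is the same.
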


For this, we need some technical lemmas that will come in handy later.
For a simplicial object~\( \ssetX{*} \) and any~\( n \), we shall use the notation~\( \face[max] \)
for the morphism~\( \face[d=n]\colon\ssetX{n}\to\ssetX{n-1} \).
This allows us to consider powers of these, e.g.
\( \face[max,upper=l] = \face[d=1] \cdots \face[d=n-1] \face[d=n] \).
Similarly, we write \( \face[min=l] = \face[min] \face[min] \cdots \face[min]\).
We use a similar convention in the cosimplicial
case and hence write
e.g.~\( \coface[max=l] = \coface[d=n] \coface[d=n-1]\cdots\coface[d=1] \)
and~\( \coface[min=l] = \coface[d=0] \coface[d=0] \cdots \coface[d=0] \).
%As in the proof of \cref{res:equivariance=comodules},
%base change will be crucial.
Applying
%flat
base change to the pullback diagram
\[\begin{tikzcd}
	\ssetX{i+j} \ar[r,"{\face[max=j]}"] \ar[d,"{\face[min=i]}"']
		& \ssetX{i} \ar[d,"{\face[min=i]}"]
\\
	\ssetX{j} \ar[r,"{\face[max=j]}"']
		& \ssetX{0}
\end{tikzcd}\]
we obtain
\(
	\face[max=j,shpush] \face[min=i,shpull]
	\cong
	\face[min=i,shpull] \face[max=j,shpush]
\).
This implies

\begin{lemma}\label{res:base_change_face_maps}
	Let~\( \ssetX{1} \rightrightarrows \ssetX{0} \)
	be a monoid in the category of schemes,
	with~\( \ssetX{*} \) the associated classifying space.
	%with classifying space~\( \ssetX{*} \).
	Suppose we are given objects~\( \algM , \algN , \algP \in \catqcoh{\ssetX{0}} \),
	along with two maps~\(
		\mtheta
		\colon
		\face[max=i,shpull] { \algM }
		\to
		\face[min=i,shpull] { \algN }
	\)
	in~\( \catqcoh{\ssetX{i}} \)
	and~\(\smash{
		\meta
		\colon
		\face[max=j,shpull] { \algN }
		\to
		\face[min=j,shpull] { \algP }
	}\)
	in~\( \catqcoh{\ssetX{j}} \), where~\( i , j \ge 0 \).
	Then the composition
	\begin{align*}
	\MoveEqLeft
	\begin{tikzcd}[sep=scriptsize,ampersand replacement=\&]
		\algM \ar[r]
			\& \face[max=i,shpush] {{ \face[max=i,shpull] { \algM } }}
			\ar[r,"{\mtheta}"]
				\& \face[max=i,shpush] {{ \face[min=i,shpull] { \algN } }}
				\ar[r]
					\& \face[max=i,shpush] {{
						\face[min=i,shpull] {{
							\face[max=j,shpush] {{
								\face[max=j,shpull] {{
									\algN
								}}
							}}
						}}
					}}
	\end{tikzcd}
	\\[-.7em]
	&\begin{tikzcd}[sep=scriptsize,ampersand replacement=\&]
		{} \ar[r,"{\meta}"']
			\& \face[max=i,shpush] {{
				\face[min=i,shpull] {{
					\face[max=j,shpush] {{
						\face[min=j,shpull] {{
							\algP
						}}
					}}
				}}
			}}
			\ar[r,symbol={\cong}]
				\&[-1.5em] \face[max=i,shpush] {{
				 \face[max=j,shpush] {{
						\face[min=i,shpull] {{
							\face[min=j,shpull] {{
								\algP
							}}
						}}
					}}
				}}
				\ar[r,symbol={=}]
					\&[-1.5em] \face[max=i+j,shpush] {{	
						\face[min=(i+j),shpull] {{ \algP }}
					}}
	\invdot
	\end{tikzcd}\end{align*}
	is the same as the composition
	\begin{align*}
	\MoveEqLeft\begin{tikzcd}[sep=scriptsize,ampersand replacement=\&]
		\algM \ar[r]
			\& \face[max=i+j,shpush] {{
				\face[max=(i+j),shpull] {{ \algM }}
			}}
			\ar[r,"{\face[max=j,shpull]{ \mtheta }}"]
				\&\face[max=i+j,shpush] {{
					\face[max=j,shpull] {{
						\face[ min=i, shpull ] { \algN }
					}}
				}}
	\end{tikzcd}
	\\[-.7em]
	&\begin{tikzcd}[sep=scriptsize,ampersand replacement=\&]
		\&{}= \face[max=i+j,shpush] {{
			\face[min=i,shpull] {{
				\face[ max=j, shpull ] { \algN }
			}}
		}}
		\ar[r,"{\face[min=i,shpull] { \meta }}"']
			\&\face[max=i+j,shpush] {{
				\face[min=i,shpull] {{
					\face[ min=j, shpull ] { \algP }
				}}
			}}
			\ar[r,symbol={=}]
				\&[-1.5em] \face[max=i+j,shpush] {{
					\face[min=\smash{(i+j)},shpull] {{ \algP }}
				}}
				.
	\end{tikzcd}\end{align*}
	In other words, the map is
	the unit of adjunction~\(
		\algM
		\to
		\face[max=i+j,shpush] {{
			\face[max=(i+j),shpull] { \algM }
		}}
	\)
	composed with the
	map~\(\smash{
		\face[min=i,shpull] { \meta }
		\circ
		\face[max=j,shpull] { \mtheta }
	}\).
\end{lemma}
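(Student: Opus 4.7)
The plan is to reduce the stated identity to an equality of adjuncts, and then exploit the naturality of the unit of adjunction together with the explicit description of the base change isomorphism as a mate. Both compositions are morphisms $\algM \to \face[max=i+j,shpush]\face[min=(i+j),shpull]\algP$, so by the adjunction $\face[max=(i+j),shpull] \dashv \face[max=i+j,shpush]$ it suffices to show that the two induced adjuncts $\face[max=(i+j),shpull]\algM \to \face[min=(i+j),shpull]\algP$ in $\catqcoh{\ssetX{i+j}}$ coincide.

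For the second composition, the adjunct is transparent. Using the factorizations $\face[max=(i+j),shpull] = \face[max=j,shpull]\face[max=i,shpull]$ and $\face[min=(i+j),shpull] = \face[min=i,shpull]\face[min=j,shpull]$, along with the identity $\face[max=j,shpull]\face[min=i,shpull] = \face[min=i,shpull]\face[max=j,shpull]$ coming from commutativity of the pullback square, the adjunct is immediately seen to be
\[
	\face[min=i,shpull]\meta \circ \face[max=j,shpull]\mtheta.
\]

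For the first composition one proceeds step by step. The essential input is that the base change isomorphism $\face[min=i,shpull]\face[max=j,shpush] \cong \face[max=j,shpush]\face[min=i,shpull]$ is by construction the mate, under the adjunctions $\face[max=j,shpull] \dashv \face[max=j,shpush]$ (and their iterates), of the identity $\face[max=j,shpull]\face[min=i,shpull] = \face[min=i,shpull]\face[max=j,shpull]$. Taking adjuncts of each of the three arrows making up the first expression, and repeatedly invoking the triangle identities together with naturality of the units of adjunction, one checks that the resulting adjunct is again $\face[min=i,shpull]\meta \circ \face[max=j,shpull]\mtheta$. The main obstacle is purely bookkeeping: one must keep track of which adjunction each unit refers to and unwind the mate description of base change at each instance; the substantive content is entirely packaged in the definition of the base change isomorphism, so the verification reduces to a formal diagram chase.
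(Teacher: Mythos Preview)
Your strategy is correct, and the mate/adjunction bookkeeping you outline does go through, but your approach differs from the paper's. The paper does not work abstractly with adjuncts and the mate description of base change; instead it immediately localizes to the affine case, writes \(\ssetX{0}=\Spec A\), \(\ssetX{1}=\Spec C\), and translates everything into explicit tensor products over \(A\). Then \(\mtheta\) and \(\meta\) become maps \(M\otimes_A C^{\otimes i}\to C^{\otimes i}\otimes_A N\) and \(N\otimes_A C^{\otimes j}\to C^{\otimes j}\otimes_A P\), the units of adjunction become \(m\mapsto m\otimes 1\otimes\cdots\otimes 1\), and the equality of the two compositions reduces to associativity of the tensor product. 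Your route has the advantage of being coordinate-free and applicable in any context where the relevant base change holds; the paper's route is shorter, entirely explicit, and aligns with the concrete tensor-product computations used in the subsequent proofs (e.g.\ of \cref{res:descent=comodules} and \cref{res:A_infty_comodules}). One caveat: your final paragraph defers the actual verification to ``a formal diagram chase''; since that chase is the whole content, in a write-up you should either carry it out or, as the paper does, pass to an explicit model where it becomes a one-line observation.
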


\begin{proof}
	The statement is local, so assume
	that \( \ssetX{0} = \Spec{\algA} \)
	and~\( \ssetX{1} = \Spec{\algC} \)
	are affine.
	The two maps~\(
		\face[d=0],\face[d=1]
		\colon
		\ssetX{1}
		\rightrightarrows
		\ssetX{0}
	\) correspond to maps~\(
		\face[d=0,comor],\face[d=1,comor]
		\colon
		\algA
		\rightrightarrows
		\algC
	\),
	which turn~\( \algC \) into an \( \algA \)-bimodule, say, with~\( \face[d=1,comor] \) providing the left action and~\( \face[d=0,comor] \) the right one.
	Then \( \mtheta \) and~\( \meta \)
	become maps
	\begin{flalign*}
	&&
		\mtheta
		\colon
		\algM
		\tens[\algA]
		\algC[tens=i]
		&\longto
		\algC[tens=i]
		\tens[\algA]
		\algN
		&&\text{in~\( \catxmod{\algC[tens=i]} \)}
			&&
	\\
	&\text{and}&
		\meta
		\colon
		\algN
		\tens[\algA]
		\algC[tens=j]
		&\longto
		\algC[tens=j]
		\tens[\algA]
		\algP
		&&\text{in~\(
			\catxmod{\algC[tens=j]}
		\).}
	\end{flalign*}
	The unit of adjunction~\(
		\algM
		\to
		\face[max=i,shpush]
		\face[max=i,shpull]
		\algM
		=
		\algM
		\tens[\algA]
		\algC[tens=i]
	\)
	is then given by~\( m\mapsto m\tens1\tens\cdots\tens1 \), and
	similarly for the other units of adjunction.
	Then the statement we want simply
	says that
	\[\begin{tikzcd}[sep=small]
		\algM
		\ar[r]
		&
		\algM
		\tens[limits,\algA,return,bin]
		\algC[tens=i]
		\ar[r,"\mtheta"]
		&
		\algC[tens=i]
		\tens[limits,\algA,return,bin]
		\algN
		\ar[r]
		&
		\algC[tens=i]
		\tens[limits,\algA,return,bin]
		(
			\algN
			\tens[limits,\algA,return,bin]
			\algC[tens=j]
		)
		\ar[r,"\meta"]
		&
		\algC[tens=i]
		\tens[limits,\algA,return,bin]
		(
			\algC[tens=j]
			\tens[limits,\algA,return,bin]
			\algP
		)		
	\end{tikzcd}\]
	is equal to
	\[\begin{tikzcd}[sep=small]
		\algM
		\ar[r]
		&
		\algM
		\tens[limits,\algA,return,bin]
		\algC[tens=(i+j)]
		\ar[r,"\mtheta"]
		&
		\algC[tens=i]
		\tens[limits,\algA,return,bin]
		\algN
		\tens[limits,\algA,return,bin]
		\algC[tens=j]
		\ar[r,"\meta"]
		&
		\algC[tens=(i+j)]
		\tens[limits,\algA,return,bin]
		\algP,
	\end{tikzcd}\]
	which boils down to associativity of the tensor product.
\end{proof}

\begin{lemma}\label{res:add_comultiplication_map}
	Let the assumptions be as in~\cref{res:base_change_face_maps}.
	Suppose that we are given
	objects~\( \algM , \algN \in \catqcoh{\ssetX{0}} \)
	along with a
	map~\(
		\mtheta
		\colon
		\face[max=n,shpull] { \algM }
		\to
		\face[min=n,shpull] { \algM }
	\).
%	Then the composition
%	\[\begin{tikzcd}[sep=scriptsize]
%		\algM \ar[r]
%			& \face[max=n,shpush] {{ \face[max=n,shpull] { \algM } }}
%			\ar[r,"{\mtheta}"]
%				& \face[max=n,shpush] {{ \face[min=n,shpull] { \algN } }}
%				\ar[r]
%					& \face[max=n,shpush] {{
%						\face[d=i,shpush] {{
%							\face[d=i,shpull] {{
%								\face[min=n,shpull] { \algN }
%							}}
%						}}
%					}}
%					\ar[r,symbol={=}]
%						&[-1.5em]
%						\face[min=n+1,shpush] {{
%							\face[max=\smash{(n+1)},shpull] {{ \algN }}
%						}} .
%	\end{tikzcd}\]
%	is the same as the composition
%	\[\begin{tikzcd}[sep=scriptsize]
%		\algM \ar[r]
%			& \face[max=n+1,shpush] {{ \face[max=(n+1),shpull] { \algM } }}
%			\ar[r,"{ \face[d=i,shpull] { \mtheta } }"]
%				& \face[max=n+1,shpush] {{
%					\face[min=(n+1),shpull] { \algN }
%				}}
%					&[-1.5em]
%					\face[max=n+1,shpush] {{
%						\face[min=\smash{(n+1)},shpull] {{ \algM }}
%					}}
%					\invdot
%	\end{tikzcd}\]
	Let~\( 0 < i < n \).
	Then we have a commutative diagram
	\[\begin{tikzcd}[sep=scriptsize]
		\algM \ar[r] \ar[d,dashed]
			&[-1.5em] \face[max=n,shpush] {{ \face[max=n,shpull] { \algM } }}
			\ar[r,"{\mtheta}"]
			\ar[d,dashed]
				& \face[max=n,shpush] {{ \face[min=n,shpull] { \algN } }}
				\ar[d]
	\\
		\face[max=n+1,shpush] {{ \face[max=(n+1),shpull] { \algM } }}
		%\ar[r,dashed]
		\ar[r,symbol={=}]
			& \face[max=n,shpush] {{
				\face[d=i,shpush] {{
					\face[d=i,shpull] {{
						\face[max=n,shpull] { \algM }
					}}
				}}
			}}
			\ar[r,dashed,"{\face[d=i,shpull]{\mtheta}}"]
				& \face[max=n,shpush] {{
					\face[d=i,shpush] {{
						\face[d=i,shpull] {{
							\face[min=n,shpull] { \algN }
						}}
					}}
				}}
				\ar[r,symbol={=}]
					&[-1.5em]
					\face[max=n+1,shpush] {{
						\face[min=\smash{(n+1)},shpull] {{ \algN }}
					}} .
	\end{tikzcd}\]
\end{lemma}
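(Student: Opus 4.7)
The plan is to verify each piece of the diagram separately, using only simplicial identities together with formal properties of adjunctions.

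First, I would establish the two key simplicial identities $\face[max=n]\circ\face[d=i] = \face[max=(n+1)]$ and $\face[min=n]\circ\face[d=i] = \face[min=(n+1)]$ as morphisms $\ssetX{n+1}\to\ssetX{0}$, valid for $0 < i < n$. Both follow either from repeated application of the simplicial identity $\face[d=i]\face[d=j] = \face[d=j-1]\face[d=i]$ for $i<j$, or more transparently from the observation that $\face[max=n]$ extracts the first coordinate and $\face[min=n]$ the last, so an interior face map $\face[d=i]$ does not change the end result. Using functoriality of pushforward and pullback, these identities then produce the two horizontal equalities appearing in the bottom row of the diagram.

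The left square commutes by the standard compatibility of units of adjunction with composition: writing $\face[max=(n+1)] = \face[max=n]\circ\face[d=i]$, the unit $\id\to\face[max=n+1,shpush]\face[max=(n+1),shpull]$ decomposes as the composition $\id\to\face[max=n,shpush]\face[max=n,shpull]\to\face[max=n,shpush]\face[d=i,shpush]\face[d=i,shpull]\face[max=n,shpull]$, where the second arrow is obtained by applying $\face[max=n,shpush](-)\face[max=n,shpull]$ to the unit for $\face[d=i]$. The middle square reduces to naturality: the morphism $\mtheta\colon\face[max=n,shpull]\algM\to\face[min=n,shpull]\algN$ in $\catqcoh{\ssetX{n}}$ commutes with the natural transformation $\id\to\face[d=i,shpush]\face[d=i,shpull]$ of endofunctors of $\catqcoh{\ssetX{n}}$, and pushing the resulting naturality square forward along $\face[max=n,shpush]$ yields the middle square of the claimed diagram.

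No deep geometric input is required beyond the base change invoked in \cref{res:base_change_face_maps}; the principal obstacle is purely notational, namely keeping track of which pushforward and pullback functors align through the simplicial identities. Once these identities and the two universal-property arguments above are in place, the full diagram assembles by pasting.
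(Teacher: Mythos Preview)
Your proposal is correct and matches the paper's approach exactly: the paper's proof is the single line ``Clear from naturality of the unit of adjunction,'' and your breakdown into the simplicial identities, the unit-composition compatibility for the left square, and naturality of the unit $\id\to\face[d=i,shpush]\face[d=i,shpull]$ for the right square is precisely the unpacking of that sentence. One minor remark: base change from \cref{res:base_change_face_maps} is not actually used here---that cross-reference only fixes the ambient setup---so you can drop the aside about it.
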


\begin{proof}
	Clear from naturality of the unit of adjunction.
\end{proof}

\begin{proof}[Proof of \cref{res:descent=comodules}]
	%To verify coassociativity of~$\ca$, we must verify commutativity of the diagram
%	Applying base change to the pullback diagram
%	\[\begin{tikzcd}[sep=small]
%		\ssetX{2} \ar[r,"{\face[d=2]}"] \ar[d,"{\face[d=0]}"']
%			& \ssetX{1} \ar[d,"{\face[d=0]}"]
%	\\
%		\ssetX{1} \ar[r,"{\face[d=1]}"']
%			& \ssetX{0}\invcomma
%	\end{tikzcd}\]
%	we obtain $\face[d=2,shpush] { { \face[d=0,shpull] }} = \face[d=0,shpull] {{ \face[d=1,shpush] }}$, which %we shall use repeatedly.\fxfatal{Or just once?}
%	will be crucial.
%	Coassociativity of~$\ca$ is commutativity of the diagram
%	\[\begin{tikzcd}[sep=small]
%		M
%		\ar[r]
%			& \face[d=1,shpush] \face[d=1,shpull] M
%			\ar[r,"\theta"]
%			\ar[d]
%				& \face[d=1,shpush] \face[d=0,shpull] M
%				\ar[d]
%	\\
%			& \face[d=1,shpush] \face[d=1,shpush] \face[d=1,shpull] \face[d=1,shpull] M
%			\ar[r,"\theta"']
%				& \face[d=1,shpush] \face[d=1,shpush] \face[d=1,shpull] \face[d=0,shpull] M
%	\end{tikzcd}\]
	The coassociativity diagram
	\[\begin{tikzcd}
		M
		\ar[r,"\ca"]
			& \algC \tens[\algA] \algM
			\ar[r,yshift=1.5pt,"\id\sotimes\ca"]
			\ar[r,yshift=-1.5pt,"\comult\sotimes\id"']
				& \algC \tens[\algA] \algC \tens[\algA] \algM.
	\end{tikzcd}\]
	is the same as the diagram
	\[\begin{tikzcd}[column sep=scriptsize,row sep=scriptsize]
			&	&
				%\face[d=1,shpush] \face[d=2,shpush]
				%\face[d=2,shpull] \face[d=0,shpull] M
				\face[d=1,shpush] \face[d=0,shpull]
				\face[d=1,shpush] \face[d=1,shpull] \algM
				%\ar[r,symbol={=}]
				\ar[r,"\mtheta"]
					%&[-1.4em]
					&
					%\face[d=1,shpush] \face[d=2,shpush] \face[d=0,shpull] \face[d=1,shpull] \algM
%					\face[d=1,shpush] \face[d=0,shpull]
%					\face[d=1,shpull] \face[d=1,shpush] \algM
%					\ar[d,"{\face[d=0,shpull]{\mtheta}}"]
					\face[d=1,shpush] \face[d=0,shpull]
					\face[d=1,shpush] \face[d=0,shpull] \algM
					\ar[d,equal]
	\\
		M \ar[r]
			& \face[d=1,shpush] \face[d=1,shpull] \algM
			\ar[r,"\mtheta"]
			%& \face[d=1,shpush] \face[d=1,shpull] \ar[r,"\theta"] M
				& \face[d=1,shpush] \face[d=0,shpull] \algM
				\ar[u] %\ar[ru,bend left]
				\ar[d]
				%\ar[rr,bend right]
					& \face[d=1,shpush] \face[d=2,shpush] \face[d=0,shpull] \face[d=0,shpull] \algM
%					\ar[r,symbol={=}]
%						&[-1.5em] \face[d=1,shpush] \face[d=0,shpull] \face[d=1,shpush] \face[d=0,shpull] \algM
	\\%[-.9em]
				&	& \face[d=1,shpush] \face[d=1,shpush] \face[d=1,shpull] \face[d=0,shpull] \algM
						\ar[ur,equal]
	\end{tikzcd}\]
	The two lemmas show that this diagram is the same as the diagram
	\[\begin{tikzcd}[column sep=scriptsize,row sep=scriptsize]
%			& \face[d=1,shpush] \face[d=2,shpush]
%			\face[d=2,shpull] \face[d=0,shpull] \algM
%			\ar[r,symbol={=}]
%				&[-1em]				
%	\\
		\algM \ar[r]
			& \face[d=1,shpush] \face[d=2,shpush]
			\face[d=2,shpull] \face[d=1,shpull] \algM
%			\ar[u,"{\face[d=2,shpull]{\mtheta}}"]
			\ar[r,"{\face[d=0,shpull]{\mtheta}\circ\face[d=2,shpull]{\mtheta}}",yshift=1.5pt]
			\ar[r,"{\face[d=1,shpull]{\mtheta}}"',yshift=-1.5pt]
				&[2em] \face[d=1,shpush] \face[d=2,shpush]
				\face[d=0,shpull] \face[d=0,shpull] \algM
	\invcomma
	\end{tikzcd}\]
	%This together with the relation~$\face[d=1,shpull]\theta = \face[d=0,shpull]\theta\circ \face[d=0,shpull]\theta$ yields the desired commutativity.
	so commutativity boils down to the relation~\(
		\face[d=1,shpull]\theta =
		\face[d=0,shpull]\theta\circ \face[d=2,shpull]\theta
	\).
	
	To verify the unit condition, notice that
	condition~\(
		\map{\counit\tens\id[\algA]}[spar]\circ\ca=\id[\algM]
	\),
	note that the left-hand side is the composition
	\[\begin{tikzcd}%[sep=small]
		\algM \ar[r]
			& \face[d=1,shpush] { { \face[d=1,shpull] { \algM } } }
			\ar[r, "\mtheta"]
				& \face[d=1,shpush] { { \face[d=0,shpull] { \algM } } }
				\ar[r]
					& \face[d=1,shpush] {{
						\degen[d=0,shpush] {{
							\degen[d=0,shpull] {{
								\face[d=0,shpull] { \algM }
							}}
						}}
					}}
					\invdot
	\end{tikzcd}\]
	The fact that this is the identity on~\( \algM \) follows from the commutative diagram
	\[\begin{tikzcd}
		\algM \ar[r] \ar[rd,equal]
			& \face[d=1,shpush] {{ \face[d=1,shpull] { \algM } }}
			\ar[r,"\mtheta"]
			\ar[d,dashed]
				& \face[d=1,shpush] {{ \face[d=0,shpull] { \algM } }}
				\ar[d]
	\\
			& \face[d=1,shpush] {{
				\degen[d=0,shpush] {{
					\degen[d=0,shpull] {{
						\face[d=1,shpull] { \algM }
					}}
				}}
			}}
			\ar[r,dashed,"{\degen[d=0,shpull]{\mtheta}}"]
				& \face[d=1,shpush] {{
					\degen[d=0,shpush] {{
						\degen[d=0,shpull] {{
							\face[d=0,shpull] { \algM }
						}}
					}}
				}}
				\ar[r,symbol={=}]
					&[-2em] \algM
	\end{tikzcd}\]
	and the assumption that~\( \degen[d=0,shpull] { \mtheta} = \id[\algM] \).
\end{proof}

\endgroup

\begingroup

%\chapter{Homotopy limits in \texorpdfstring{\(\catcat\)}{Cat} and
%\texorpdfstring{\( \catdgcat \)}{dgCat}}

\section{Descent via homotopy limits}

In this chapter, we give a small resume
of the exposition of homotopy limits
presented in~\textcite{ends}
and refer the reader to that paper for further details.
If \( \catC \)~is a model category
and~\( \catGamma \) a category,
we may consider the category of
functors~\( \catC[diag=\catGamma] = \catfun{\catGamma,\catC} \)
of functors~\( \catGamma\to\catC \),
which we shall also refer to as “diagrams”.
It makes sense to call a
map of diagrams~\( \malpha\colon\mF\to\mG \)
a \textdef{weak equivalence} if~\(
	\malpha[\vgamma]\colon\mF{\vgamma}\to\mG{\vgamma}
\)
is a weak equivalence for all~\( \vgamma \in \catGamma \).
It is natural to refer to such weak equivalences
as \textdef{componentwise} weak equivalences.
However, we immediately run into the problem
that the limit functor~\(
	\invlim
	\colon
	\catC[diag=\catGamma]
	\to
	\catC
\)
does not in general take componentwise weak equivalences
to weak equivalences in~\( \catC \).
Since \( \invlim \)~is a right adjoint,
this leads us into trying to \emph{derive} it.
The right derived functor of~\( \invlim \)
is called the \text{homotopy limit}
and is denoted~\(
	\invholim
	\colon
	\catC[diag=\catGamma]
	\to
	\catC
\).
Dually, the left derived functor of~\( \dirlim \)
is called the \textdef{homotopy colimit}
and is denoted~\(
	\dirholim
	\colon
	\catC[diag=\catGamma]
	\to
	\catC
\).

Quillen's model category machinery tells us how to derive the limit:
We must equip the diagram category~\( \catC[diag=\catGamma] \)
with a model structure with componentwise weak equivalences and in which the
limit functor~\( \invlim\colon\catC[diag=\catGamma]\to\catC \) is a right Quillen functor.
In this case, the derived functor is given by
\(
	\invholim{ \mF } = \invlim { \mfibrep ( \mF ) }
\)
for some fibrant replacement~\( \mfibrep { \mF } \)
in~\( \catC[diag=\catGamma,smash] \).
Indeed, such a model structure on~\( \catC[diag=\catGamma] \) exists e.g.\ if the model category~\( \catC \) is \emph{combinatorial}.
More precisely, we introduce
\begin{itemize}
	\item The \textdef{projective model structure}~\( \catC[diag=\catGamma,proj] \)
	where weak equivalences and fibrations are calculated componentwise.
	\item The \textdef{injective model structure}~\( \catC[diag=\catGamma,inj] \)
	where weak equivalences and cofibrations are calculated componentwise.
\end{itemize}
Denoting by \( \mconst \colon \catC \to \catC[diag=\catGamma] \)
the constant functor embedding,
we clearly see that
\(
	\mconst
	\colon
	\catC
	\rightleftarrows
	\catC[diag=\catGamma,inj,smash]
	\noloc
	\invlim
\)
is a Quillen adjunction since \( \mconst \)~preserves (trivial)
cofibrations.
Dually,~\(
	\dirlim
	\colon
	\catC[diag=\catGamma,proj,smash]
	\rightleftarrows
	\catC
	\noloc
	\mconst
\)
is a Quillen adjunction.
%If \( \catGamma \)~has the structure of a \textdef{Reedy category},
%there exists a third choice:
%\begin{itemize}
%	\item The \textdef{Reedy model structure}~\( \catC[diag=\catGamma,reedy] \),
%	where weak equivalences are calculated componentwise,
%	but neither cofibrations nor fibrations are.
%\end{itemize}
The injective model structure being in general rather complicated, calculating such a replacement of a diagram in practice becomes very involved for all but the simplest shapes of the category~\( \catGamma \). Therefore, traditionally, other tools have been used.

%One of the most popular ones comes from the observation that
%one may extend the limit
%functor~\( \invlim[\catGamma] \colon \catC[diag=\catGamma] \to \catC \)
%to obtain the \emph{end}
%functor~\(
%	\End[\catGamma,smash]
%	\colon
%	\catC[diag=\catGamma[op]\times\catGamma] \to \catC
%\), which we present below,
%i.e.\ we
%have~\( \invlim[\catGamma]{\mF} = \End[\catGamma]{\mF}[smash] \)
%when \( \mF \in \catC[diag=\catGamma] \)
%is regarded as a bifunctor in~\( \catC[diag=\catGamma[op]\times\catGamma] \)
%which is constant with respect to the first variable.
%This perspective on limits has the advantage that ends, despite their greater complexity, can be much easier to derive.

We shall only be interested in homotopy
limits over the simplex category~\( \catdelta \)
with objects finite ordered
sets~\( \ordset{n} = \set{0,1,...,n} \)
and morphisms the order-preserving (i.e.~non-decreasing)
maps.
In this case, one
of the available formulas for the homotopy
limit is the \textdef{fat totalization formula}:

\begin{propositionbreak}[{Proposition
\parencite[Example~6.4]{ends}}]\label{res:fat_tot}
	Suppose the model category~\( \catC \)
	is combinatorial
	and~\( \cosimplicial{X}{*} \colon \catdelta\to\catC \)
	a cosimplicial diagram.
	Then the homotopy limit over
	the simplex category~\( \catdelta \)
	may be calculated by the formula
	\[
		{\textstyle
		\invholim[\catdelta]{\cosimplicial{X}{*}}
		}
		=
		\End[\ordset{n}\in\catdelta[plus]]{
			\fibrep{\cosimplicial{X}{n}}[n]
		}
	\]
	where
	the integral refers to the \textdef{end}
	construction,
	and where~\(\smash{
		\fibrep
		\colon
		\catC
		\to
		\catC[diag={\catdelta[op,plus]},inj]
	}\)
	is a functor
	that
	takes~\( \vx \in \catC \) to
	an injectively fibrant replacement
	of the constant \( \catdelta[op,plus] \)-diagram at~\( \vx \).
\end{propositionbreak}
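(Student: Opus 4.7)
The plan is to deduce this from the general theory of homotopy ends developed in \parencite{ends}. First I would recall that, since \( \catC \)~is combinatorial, the injective model structure exists on \( \catC[diag=\catdelta,inj] \) and has componentwise cofibrations and weak equivalences. Hence the constant-diagram embedding \( \mconst \colon \catC \to \catC[diag=\catdelta,inj] \) is left Quillen, its right adjoint \( \invlim[\catdelta] \) is right Quillen, and the derived functor \( \invholim[\catdelta] \) is computable by applying \( \invlim[\catdelta] \) to any injectively fibrant replacement of~\( \cosimplicial{X}{*} \).

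Next I would identify the stated end with such a computation. For each~\( n \), the object \( \fibrep{\cosimplicial{X}{n}} \) is by construction an injectively fibrant resolution of the constant \( \catdelta[plus,op] \)-diagram at~\( \cosimplicial{X}{n} \); as~\( n \) varies, these assemble into a bifunctor on~\( \catdelta[plus] \), whose ``fat diagonal'' end is the object \( \End[\ordset{n}\in\catdelta[plus]]{ \fibrep{\cosimplicial{X}{n}}[n] } \). The verification that this end realises \( \invholim[\catdelta]{\cosimplicial{X}{*}} \) would proceed by invoking the invariance and interchange theorems for homotopy ends of~\parencite{ends} to relate it to \( \invlim[\catdelta] \) of an injectively fibrant replacement of~\( \cosimplicial{X}{*} \).

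The main obstacle is justifying the use of the semi-simplex category~\( \catdelta[plus] \) in place of~\( \catdelta \); this is the ``fat'' aspect of the formula. Integrating only over face maps dispenses with the Reedy-type coherence conditions on~\( \fibrep \) that the classical (non-fat) totalization formula would require, and its soundness rests on the fact that the fat end of a levelwise-fibrant bifunctor computes its derived end. Making this precise is the content of Example~6.4 of~\parencite{ends}, which thereby supplies the remainder of the argument.
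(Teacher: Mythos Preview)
The paper does not prove this statement; it is cited directly as Example~6.4 of \parencite{ends} and stated without proof. Your proposal is a reasonable outline of how the argument in that reference goes, so there is nothing in the paper's own treatment to compare it against.
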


Suppose that \( \catC \)~is in fact a \emph{simplicial model category},
meaning that it is enriched, powered, and copowered
over simplicial sets.
The powering is given by a Quillen bifunctor
\[
	\catsset[op]\times\catC\longto\catC,
	\qquad
	\tup{K,\vx}
	\longmapsto
	\vx[powering=K]
	,
\]
where \( \catsset \)~denotes simplicial sets
equipped with the Quillen model structure.
Being a Quillen bifunctor means,
among other things, that it takes (trivial) cofibrations
in~\( \catsset \) to (trivial) fibrations.
Since the standard simplex~\( \simp{*} \) is a projectively cofibrant
\( \catdelta[plus,op] \)-diagram of simplicial sets, this implies
that~\( \catdelta[plus,op]\to\catC \),
\( \ordset{n} \mapsto \vx[powering=\simp{n}] \),
is a fibrant replacement functor like the one in the
proposition.

%\section{Homotopy limits in categories}

	We recall from \textcite{rezk}
	the existence of a model structure
	on the category~\(\catcat \) of categories
	with
	\begin{itemize}
		\item weak equivalences given by equivalences of categories;
		\item cofibrations given by functors which are injective on objects;
		\item fibrations given by isofibrations,
		i.e.\ functors~\( \mF \colon \catC \to \catD \)
		such that any isomorphism of the
		form~\( \mh\colon \mF{c}\to d \)
		in~\( \catD \)
		there exists a morphism~\( \mg \colon c\to c' \)
		in~\( \catC \)
		such that~\( \mF{\mg} = \mh \).
	\end{itemize}
	This model structure is combinatorial
	and simplicial.
	The powering is given by~\(
		\catC[powering=\simp{n}]
		=
		\catfun{\catiso{n},\catC}
	\),
	where \( \catiso{n} \)~denotes
	the category
	with \( n+1 \)~objects, denoted by~\( 0,1,\ldots,n \),
	and one unique arrow between any two objects
	(in particular, all arrows are isomorphisms, and we have a groupoid).

\newcosimplicial\cosimpcatC{{\catC}}

Thus \cref{res:fat_tot} shows that
\begin{proposition}\label{res:holim_cat}
	The homotopy limit~\(
		\invholim[\catdelta]{\cosimpcatC{*}}
	\)
	of a \( \catdelta \)-diagram
	of categories
	is the category with objects
	pairs~\( \tup{ \algM , \mtheta } \),
	where \( \algM \in \cosimpcatC{0} \)
	and \( \mtheta \)~is
	an isomorphism~\(
		\mtheta
		\colon
		\coface[d=1]{\algM}
		\isoto
		\coface[d=0]{\algM}
	\)
	satisfying the cocycle condition~\(
		\coface[d=0]{\mtheta}
		\circ
		\coface[d=2]{\mtheta}
		=
		\coface[d=1]{\mtheta}
	\).
	A morphism~\(
		\malpha
		\colon
		\tup{\algM,\mtheta}
		\to
		\tup{\algN,\meta}
	\)
	consists of a map~\( \malpha\colon\algM\to\algN \)
	such that~\(
		\meta\circ\coface[d=1]{\malpha}
		=
		\coface[d=0]{\malpha}\circ\mtheta
	\).
\end{proposition}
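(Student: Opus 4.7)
The plan is to apply the fat totalization formula of Proposition~\ref{res:fat_tot} to the cosimplicial diagram $\cosimpcatC{*}$. Since $\catcat$ is simplicially powered by $\catC[powering=\simp{n}] = \catfun{\catiso{n}, \catC}$, the assignment $\ordset{n} \mapsto \catC[powering=\simp{n}]$ furnishes a functorial injectively fibrant replacement of the constant $\catdelta[op,plus]$-diagram, yielding
\[
\textstyle\invholim[\catdelta]{\cosimpcatC{*}}
\;\cong\;
\End[\ordset{n} \in \catdelta[plus]]{\catfun{\catiso{n}, \cosimpcatC{n}}}.
\]
The bulk of the work is then to unpack this end. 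Its objects will be coherent families of functors $F_n \colon \catiso{n} \to \cosimpcatC{n}$ that intertwine, for each injection $\mvarphi \colon \ordset{m} \to \ordset{n}$ in $\catdelta[plus]$, the induced functors $\catiso{m} \to \catiso{n}$ and $\cosimpcatC{m} \to \cosimpcatC{n}$; morphisms will be families of natural transformations obeying the analogous compatibility.

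I would then set $\algM := F_0(0)$ and define $\mtheta$ to be the image under $F_1$ of the unique isomorphism $0 \to 1$ in $\catiso{1}$. Compatibility with the two injections $\coface[d=0], \coface[d=1] \colon \ordset{0} \to \ordset{1}$ immediately forces $\mtheta$ to be an arrow $\coface[d=1]{\algM} \to \coface[d=0]{\algM}$, which is automatically an isomorphism because $\catiso{1}$ is a groupoid. Compatibility with the unique injection $\ordset{0} \to \ordset{n}$ with image $\{i\}$ determines $F_n$ on objects, while compatibility with the unique injection $\ordset{1} \to \ordset{n}$ with image $\{i<j\}$ determines $F_n$ on the morphism $i \to j$. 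Functoriality of $F_2$ on the composite $(0\to 2) = (1\to 2)\circ(0\to 1)$, pulled back via $\coface[d=0], \coface[d=1], \coface[d=2] \colon \ordset{1} \to \ordset{2}$, will yield exactly the cocycle relation $\coface[d=1]{\mtheta} = \coface[d=0]{\mtheta} \circ \coface[d=2]{\mtheta}$.

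Conversely, starting from $(\algM, \mtheta)$ satisfying the cocycle, I would define the $F_n$ by these formulas and verify functoriality by pulling the cocycle back along the injection $\ordset{2} \to \ordset{n}$ associated to each triple $i<j<k$; higher cosimplicial compatibility then holds by construction. Morphisms unwind analogously: a compatible family of natural transformations is determined by $\malpha := \alpha_0 \colon \algM \to \algN$, and the naturality square of $\alpha_1$ at the unique isomorphism $0 \to 1$ in $\catiso{1}$ yields precisely $\meta \circ \coface[d=1]{\malpha} = \coface[d=0]{\malpha} \circ \mtheta$. The only genuine obstacle is the combinatorial bookkeeping showing that the levels $n \ge 2$ carry no extra data beyond what is specified at levels $0$ and $1$; I expect a clean induction on $n$ using only injective maps in $\catdelta[plus]$, with the cocycle condition capturing all nontrivial coherence.
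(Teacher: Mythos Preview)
Your proposal is correct and follows essentially the same approach as the paper: both apply the fat totalization formula with the simplicial powering $\catC^{\simp{n}}=\catfun{\catiso{n},\catC}$ to obtain the end $\End[\ordset{n}\in\catdelta[plus]]{\catfun{\catiso{n},\cosimpcatC{n}}}$, and then unpack it by observing that a compatible family of functors $\catiso{n}\to\cosimpcatC{n}$ is determined by its values at levels $0$ and $1$, yielding precisely $(\algM,\mtheta)$ with the cocycle condition. The paper's proof is merely a terser version of the same argument.
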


\begin{proof}
	The object set of the homotopy limit is
	\[
		\End[\ordset{n}\in\catdelta[plus]]{
			\Hom[\catcat]{\catiso{n},\cosimpcatC{n}}
		}
		,
	\]
	which is the set of natural transformations~\(
		\mtheta\colon\catiso{*}\to\cosimpcatC{*}
	\),
	i.e.~maps \( \mtheta[d=n]\colon\catiso{n}\to\cosimpcatC{n} \)
	commuting with all face maps.
	This amounts to an object~\(
		\algM = \mtheta[d=0]{0} \in \cosimpcatC{0}
	\)
	and an isomorphism~\(
		\mtheta
		=
		\mtheta[d=1]
		\colon
		\coface[d=1]{\algM}
		\isoto
		\coface[d=0]{\algM}
	\)
	satisfying the mentioned cocycle condition.
	A morphism~\( \malpha \) consists
	of a collection of natural
	transformations~\( \malpha[d=n] \colon \catiso{n}\to\cosimpcatC{n} \)
	commuting with face maps.
	This boils down to a map~\( \malpha = \malpha[d=0]\colon\algM\to\algN \)
	satisfying the mentioned condition.
\end{proof}

\begin{remark}
	We note that the conditions on~\( \mtheta \) imply that
	\( \codegen[d=0]{\mtheta} = \id \):
	Indeed, applying~\( \codegen[d=0] \codegen[d=1] = \codegen[d=0] \codegen[d=0] \)
	implies~\(
		\codegen[d=0]{\mtheta}
		\circ
		\codegen[d=0]{\mtheta}
		=
		\codegen[d=0]{\mtheta}
	\),
	which together with the isomorphism condition
	implies~\( \codegen[d=0]{\mtheta} = \id \).
	This shows that the homotopy limit in~\( \catcat \)
	is always equal to the \textdef{totalization}
	\[
		\End[\ordset{n}\in\catdelta]{
			\catfun{\catiso{n},\cosimpcatC{n}}
		}
		,
	\]
	where the end is taken over the whole simplex
	category~\( \catdelta \),
	including the degeneracy maps.
	This is a special feature of~\( \catcat \);
	in other simplicial model categories,
	this only holds in some cases, e.g. if the diagram~\( \cosimpcatC{*} \)
	is \emph{Reedy-fibrant}.
\end{remark}

\begin{corollary}\label{res:classical_descent_holim}
	For a groupoid~\( \ssetX{1} \rightrightarrows\ssetX{0} \)
	in schemes with classifying space~\( \ssetX{*} \), we have
	an equivalence of categories~\(
		\catdesc{\ssetX{*}}
		\cong
		\invholim[\catdelta]{\catqcoh{\ssetX{*}}}
	\)
\end{corollary}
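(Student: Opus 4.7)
The plan is to apply \cref{res:holim_cat} to the cosimplicial diagram of categories $\cosimpcatC{n} = \catqcoh{\ssetX{n}}$, whose coface and codegeneracy maps are the pullback functors $\face[d=i,shpull]$ and $\degen[d=i,shpull]$ along the face and degeneracy maps of the simplicial scheme $\ssetX{*}$ (this is automatic, since $\catqcoh{-}$ is a contravariant pseudofunctor of schemes). That proposition immediately identifies $\invholim[\catdelta]{\catqcoh{\ssetX{*}}}$ with the category whose objects are pairs $\tup{\algM,\mtheta}$, where $\algM \in \catqcoh{\ssetX{0}}$ and $\mtheta\colon \face[d=1,shpull]{\algM} \isoto \face[d=0,shpull]{\algM}$ is an \emph{isomorphism} satisfying the cocycle condition
\[
	\face[d=0,shpull]{\mtheta} \circ \face[d=2,shpull]{\mtheta} = \face[d=1,shpull]{\mtheta},
\]
and whose morphisms $\tup{\algM,\mtheta}\to\tup{\algN,\meta}$ are maps $\malpha\colon\algM\to\algN$ in $\catqcoh{\ssetX{0}}$ satisfying $\meta\circ\face[d=1,shpull]{\malpha} = \face[d=0,shpull]{\malpha}\circ\mtheta$.

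To match this with $\catdesc{\ssetX{*}}$ as defined at the beginning of the chapter, I would invoke \cref{res:cocycle_iff_degen=0}: once the cocycle condition holds, requiring $\mtheta$ to be an isomorphism is equivalent to the unit condition $\degen[d=0,shpull]{\mtheta}=\id$. Hence the object classes of the two categories coincide, and the descriptions of morphisms agree on the nose. The identity on underlying data $\tup{\algM,\mtheta}$ therefore defines a functor in either direction, and these are mutually inverse, yielding the asserted equivalence (indeed, an isomorphism) of categories.

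No genuine obstacle arises; the argument is essentially a bookkeeping reconciliation of two presentations of the same data, once \cref{res:holim_cat} and \cref{res:cocycle_iff_degen=0} are available. The only mildly subtle point is the interplay between the isomorphism requirement (built into the model-categorical formula through the powering $\catiso{n}$) and the classical unit condition, which is exactly what \cref{res:cocycle_iff_degen=0} was designed to handle.
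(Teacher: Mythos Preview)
Your proposal is correct and matches the paper's intended argument: the corollary is stated without proof because it follows immediately from \cref{res:holim_cat} together with \cref{res:cocycle_iff_degen=0} (the preceding Remark already notes one direction). Your reconciliation of the isomorphism condition with the unit condition is exactly the point the paper leaves implicit.
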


\endgroup

%\include{sset/sset_11}
%\newpage

%\input{functors/functors_410}
%
%\input{proj_cof_examples/proj_cof_examples_21}

%\newpage

\begingroup

\chapter{Differential graded categories and \texorpdfstring{\ainfty}{A\_infinity}-categories}\label{sec:differential-graded-categories}

\setupobject\catC{symbol={\categoryformatmath{C}}}

In this chapter, we mainly follow the sources
\textcite{lyu},
\textcite{keller}
and \textcite{lh},
with a few generalizations.
%Keller works with coaugmented counital cocategories, while we shall
%mainly follow the two other sources by working with non-coaugmented ones.
%Up to technicalities, they are equivalent by reduction.
%The two notions are equivalent via reduction.
We follow the sign conventions
of~\textcite[section~1.1]{lh}.
In this chapter, we fix a field~\( \ringk \)
and denote by~\( \catvect = \catvect{\ringk} \) the category
of vector spaces over~\( \ringk \).

%In this section, we fix an additive, cocomplete, unital monoidal category~\( \tup{\catC,\tens,\dgcatunit} \) such that \( \tens \)~is additive.

\section{Graded objects, complexes, and sign conventions}

	Consider the category~\( \catvect[graded] \)
	of \( \Z \)-graded vector spaces, which we shall write as tuples~\(
		\algM
		=
		\algM[*]
		=
		\tup{{\algM[d=n]}}[n\in\Z]
	\)
	with~\( \algM[d=n] \in \catvect \).
	The degree of a homogeneous element~\( \vx \in \algM[*] \)
	is denoted~\( \dgcatdeg{\vx} \).
	If \( \algM[*] \) and~\( \algN[*] \) are graded vector spaces,
	their hom space~\( \Hom[\ringk,*]{\algM,\algN} \) is given by
	\[
		\Hom[\ringk,d=n]{ \algM[*],\algN[*] }
		=
		\Prod[m\in\Z]{ \Hom[\ringk]{ {\algM[d=m]} , {\algN[d=m+n]} } }
		.
	\]
	The category~\( \catvect[graded] \) is monoidal with tensor product given by
	\[
		\alg{\algM\tens[\ringk]\algN}[spar,d=n]
		=
		\dirsum[ {p+q=n} ]{ \algM[d=p] \tens[\ringk] \algN[d=q] }
		.
	\]
	If \( \mf \colon \algM[1] \to \algM[2] \)~is a map of degree~\( r \)
	and~\( \mg \colon \algN[1] \to \algN[2] \)~is a map of degree~\( s \),
	then~\(
		\mf \tens \mg
		\colon
		\algM[1] \tens[\ringk] \algN[1]
		\to
		\algM[2] \tens[\ringk] \algN[2]
	\)
	is a map of degree~\( r + s \) given on the \( \tup{p,q} \)-component, where \( p + q = n \),
	by
	\[
		(-1)^{ps}
		\mf[d=p] \tens \mg[d=q]
		\colon
		\algM[1,d=p] \tens \algN[1,d=q]
		\longto
		\algM[2,d=p+r] \tens \algN[2,d=q+s]
		.
	\]
	
	If \( \algM \)~is a graded vector space, we write~\( \algM[shift=p] \)
	for its shifted vector space given by~\( \algM[shift=p,d=n] = \algM[d=n+p] \).
	We denote by~\( \ms \colon \algM\to\algM[shift=1] \)
	the map of degree~\( -1 \) which is the identity in each component.
	The inverse map is written~\( \momega = \ms[inv] \).
	Because of the above sign conventions, we note that
	\(
		\ms[tens=n]
		\colon
		\algM[tens=n]
		\to
		\algM[shift=1,tens=n]
		=
		\algM[tens=n,return,shift=n]
	\)
	is given by
	\[
		\ms[tens=n]{ \vx[n] \sotimes \cdots \sotimes \vx[1] }
		=
		(-1)^{\Sum{(i-1)\dgcatdeg{\vx[i]}}}
		\ms{\vx[n]}
		\tens
		\cdots
		\tens
		\ms{\vx[1]}
	\]
	and similarly for~\( \momega[tens=n] \).
	As a consequence, we have
	\[
		\momega[tens=n]
		\circ
		\ms[tens=n]
		=
		(-1)^{n(n-1)/2}
		\id[ { \algM[tens=n] } ]
		.
	\]
	If \( \algM \) and~\( \algN \) are graded objects
	and~\( n \ge 1 \), we shall make use of the
	bijections
	\begin{align}
	\Hom[\ringk,d=l+1-n]{{\algM[tens=n]},\algN}
		& \longisoto
		\Hom[\ringk,d=l]{{\algM[shift=1,tens=n]},{\algN[shift=1]}}
		\label{eq:coleibniz}
	\\
	\mf
		& \longmapsto
		\mf[prime]
		=
		(-1)^{l}
		\ms\circ\mf\circ\momega[tens=n]
		\nonumber
	\\
	\shortintertext{and}
	\Hom[\ringk,d=l+1-n]{\algM,{\algN[tens=n]}}
		& \longisoto
		\Hom[\ringk,d=l]{{\algM[shift=-1]},{\algN[shift=-1,tens=n]}}
		\label{eq:leibniz}
	\\
	\mf
		& \longmapsto
		\mf[prime] = (-1)^{l} \momega[tens=n]\circ\mf\circ\ms
		\nonumber
		.
	\end{align}

	We denote by~\( \catcom{\ringk} \)
	the category of complexes of \( \ringk \)-vector spaces.
	We use cohomological notation, so to us, a complex is a graded
	vector space~\( \algM[*] \in \catvect[graded] \)
	equipped with a differential
	map~\( \dif \colon \algM \to \algM[shift=1] \)
	with~\( \dif[squared]=0 \).
	If \( \algM \) and~\( \algN \) are complexes, the graded hom
	space~\( \Hom[*,\ringk]{\algM,\algN} \)
	becomes a complex with the differential
	\(\smash{
		\dif[par]{\mf}
		=
		\dif[\algN] \circ \mf
		-
		(-1)^{\dgcatdeg{\mf}} \mf \circ \dif[\algM]
	}\).
	The tensor product~\( \algM \tens[\ringk] \algN \) of graded objects becomes a complex
	via~\(\smash{
		\dif[\algM \tens[\ringk] \algN]
		=
		\dif[\algM]
		\tens
		\id[\algN]
		+
		\id[\algM]
		\tens
		\dif[\algN]
	}
	\).
	The shifted object~\( \algM[shift=n] \) becomes a complex
	via the differential~\(\smash{
		\dif[ {\algM[shift=n,d=i]} ]
		=
		(-1)^{n}
		\dif[ {\algM[d=n+i]} ]
	}\).
	
	Most of the following considerations make sense both
	in the graded and differential graded~(dg-)~case.
	To treat the two cases simultaneously,
	we let the symbol~\( \cocatstar \) stand
	for any of the symbols~\( \categoryformat{gr} \)
	and~\( \categoryformat{dg} \)
	(and for the terms “graded” and “dg-”).
	We shall use the notation~\( \catxstmod{\ringk} \)
	to refer to either~\( \catvect[graded] \)
	or~\( \catcom{\ringk} \).

%\newmapclass{bigradedcomplex}[
%	parent=cohomology,
%	output=complex,
%	singlekeys={
%		{red}{overline},
%	},
%]
%\newbigradedcomplex\biC{\mathscr{C}}
%\newbigradedcomplex\biD{\mathscr{D}}

%\begin{parabreak}[Graded quivers and dg-quivers]

\section{Graded quivers and dg-quivers}

	If \( \setobjE \)~is a set, we may regard it as a discrete category.
	Even though~\( \setobjE[op] = \setobjE \), we shall
	use both \( \setobjE \) and~\( \setobjE[op] \) in the
	following.
	The category~\( \catxstmod{\ringk[\setobjE]} \) of \textdef{left \( \ringk[\setobjE] \)-\cocatstar-modules}
	is the category of functors~\( \setobjE[op]\to\catxstmod{\ringk} \),
	and 
	the category~\( \catstmodx{\ringk[\setobjE]} \) of \textdef{right \( \ringk[\setobjE] \)-\cocatstar-modules}
	is the category of functors~\( \setobjE\to\catxstmod{\ringk} \)
	(the two categories are equal, for the time being).
	If \( \setobjF \)~is another set,
	the category~\( \catxstmodx{\ringk[\setobjE]}{\ringk[\setobjF]} \)
	of \textdef{\( \ringk[\setobjE] \)--\( \ringk[\setobjF] \)-\cocatstar-bimodules}
	is the category of functors~\(
		\setobjE[op]\times\setobjF\to\catxstmod{\ringk}
	\).
	We define a tensor product by convolution, i.e.
	\[
		\tens[\ringk[\setobjE]]
		\colon
		\catstmodx{\ringk[\setobjE]}
		\times
		\catxstmod{\ringk[\setobjE]}
		\longto
		\catxstmod{\ringk}
		,
		\qquad
		\tup{\dgcatC,\dgcatD}
		\longmapsto
		\dirsum[s\in\setobjE]{
			\dgcatC{s}\tens[\ringk]\dgcatD{s}
		}.
	\]
	This extends in a natural way to the case when one or both
	of \( \dgcatC \) or~\(\dgcatD \) are \cocatstar-bimodules.
	In particular,
	\(
		\tup{
			\catxstmodx{\ringk[\setobjE]}{\ringk[\setobjE]} ,
			\tens[\ringk[\setobjE]]
		}
	\)~becomes a monoidal category.
	It is also unital, with unit the bimodule~\( \ringk[\setobjE] \)
	defined by~\(
		\ringk[\setobjE,arg={s,s}] = \ringk
	\)
	and~\( \ringk[\setobjE,arg={s,t}] = 0 \) if~\( s\neq t \).
	This category acts on~\( \catxstmod{\ringk[\setobjE]} \)
	from the left and on~\( \catstmodx{\ringk[\setobjE]} \)
	from the right.
	All of these allow a shift operation~\( \dgcatC \mapsto \dgcatC[shift=n] \), \( n\in\Z\), the shift being applied componentwise.

	We shall also refer to the
	monoidal category~\( \catxstmodx{\ringk[\setobjE]}{\ringk[\setobjE]} \)
	as \textdef{\cocatstar-quivers}
	over~\( \ringk \) with object set~\( \setobjE \)
	and write it as~\( \catstquiv{E} \).
%	We think of elements~\( \dgcatC \in \catgrquiv{E} \)
%	as bigraded graded vector spaces
%	\[
%		\dgcatC
%		=
%		\dirsum[s,t\in\setobjE]{
%			\dgcatC[*]{s,t}
%		}
%	\]
%	with~\(\dgcatC[*]{s,t} \in \catvect[graded]\).
%	We equip~\( \catgrquiv{E} \) with a monoidal structure~\( \tens[\setobjE] \) given by the convolution product
%	\[
%		\algmap{\dgcatC\tens[\setobjE]\dgcatD}[spar]{s,u}
%		=
%		\dirsum[t\in\setobjE] { \dgcatC{ t , u } \tens[\ringk] \dgcatD { s , t } }
%		.
%	\]
%	The unit for~\( \tens[\setobjE] \) is~\( \ringk[\setobjE] \in \catgrquiv { E } \) given by
%	\( \ringk[\setobjE,arg={s,s}] = \ringk \)
%	and \( \ringk[\setobjE,arg={s,t}] = 0 \) for~\( s\neq t\).
	A \textdef{morphism~\( \dgcatC \to \dgcatD\) of degree~\( n \)} is a
	morphism~\( \dgcatC \to \dgcatD[shift=n] \).
	If \( \dgcatC \) and~\( \dgcatD \)~are
	augmented \cocatstar-quivers,
	the tensor product~\(
		\dgcatC \tens[\ringk] \dgcatD
	\)
	has objects~\(
		\catob{\dgcatC}\times\catob{\dgcatD}
	\)
	and morphism spaces given by
	\[
		\algmap{\dgcatC\tens[\ringk]\dgcatD}[spar]{
			\tup{c,c'},
			\tup{d,d'}
		}
		=
		\dgcatC{c,c'}
		\tens[\ringk]
		\dgcatD{d,d'}
		.
	\]

	The \cocatstar-quiver~\( \dgcatC \) is \textdef{augmented}
	if there is a chain of maps of
	\cocatstar-quivers~\(
		\ringk[\setobjE]
		\xto{\meta}
		\dgcatC
		\xto{\epsilon}
		\ringk[\setobjE]
	\)
	whose composition is the identity.
	In that case, its \textdef{reduction}~\( \dgcatC[red,smash] \)
	%is given by~\( \dgcatC[red] { \vx , \vx } = \dgcatC { \vx , \vy } \)
	%for~\( \vx \neq \vy \), but
	%\( \dgcatC[red] { \vx , \vx } = \dgcatC { \vx , \vx } / \Im { \meta } \).
	is given by~\( \dgcatC[red,smash] = \Coker{\meta} \cong \Ker{ \map{\epsilon} } \),
	calculated in each degree.
	This gives us a canonical splitting~\(
		\dgcatC
		=
		\dgcatC[red,smash]
		\oplus
		\ringk[\setobjE]
	\).
	Conversely, the \textdef{augmentation} of a non-augmented \cocatstar-quiver
	is the augmented \cocatstar-quiver~\( \dgcatC[augment,smash] = \dgcatC\oplus\ringk[\setobjE] \).
	If \( \dgcatC \) and~\( \dgcatD \) are augmented \cocatstar-quivers with object set~\( \setobjE \),
	a \textdef{morphism of augmented \cocatstar-quivers}
	is a morphisms~\( \dgcatC \to \dgcatD \)
	that respects \( \meta \) and~\( \map{\epsilon} \).
	This allows us to define a category~\( \catstquiv[aug]{E} \) of augmented \cocatstar-quivers. It is equivalent to~\( \catstquiv{E} \) via reduction, but
	this equivalence is not monoidal.

	The collection~\( \catstquiv \) of \cocatstar-quivers for all choices of the set~\( \setobjE \) also form a category. Namely, if \( \dgcatC \in \catstquiv { E } \) and~\( \dgcatD \in \catstquiv { F } \), a morphism
	\( \mf \colon \dgcatC\to\dgcatD \) consists of a
	map of sets~\( \mf \colon\setobjE\to\setobjF \)
	and a morphism~\(
		\dgcatC\to \mf[qpull] { \dgcatD }
		%\dgcatC[leftres=\mf,rightres=\mf]
	\)
	in~\( \catstquiv{ E } \). Here
	\( \mf[qpull] { \dgcatD } \)~denotes
	the composition \( \dgcatD \circ \map { \mf \times \mf }[spar] \),
	with \( \dgcatD \) regarded as a functor~\( \setobjF[op] \times \setobjF \to \catxstmod{\ringk} \).
%	If \( \mf , \mg \)~are morphisms \( \dgcatC \to \dgcatD \) of quivers as above, a \textdef{natural transformation of degree~\( n \)}
%	consists of a
%	morphism~\(
%		\ringk[\setobjE]\to \map{\mf\times\mg}[spar,qpull] { \dgcatD }[symbolputright={}{}{}{}{},shift=n]
%	\)
%	in~\( \catquiv { \setobjE } \).
%	This allows us to define an inner
%	hom~\(
%		\Hom[int,\catquiv]
%		\colon
%		\catquiv[op]\times\catquiv\to\catquiv
%	\)
%	with objects being morphisms of quivers and morphism complexes being natural transformations of all
%	degrees.\fxfatal{Also, probably better to just remove this notion of natural transformations completely and simply introduce it when you get to cocategories.}
	The category~\( \catstquiv[aug] \) of augmented \cocatstar-quivers for all choices of~\( \setobjE \) is defined analogously.

	If \( \mf , \mg \)~are morphisms \( \dgcatC \to \dgcatD \) of augmented \cocatstar-quivers as above, a \textdef{natural transformation}
	of degree~\( n \)
	consists of a
	morphism~\(
		%\ringk[\setobjE]
		\malpha
		\colon
		\dgcatC
		\to \map{\mf\times\mg}[spar,qpull]{ \dgcatD[red] }[symbolputright={}{}{}{}{},shift=n]
	\)
	in~\( \catstquiv { \setobjE } \),
	where~\(\smash{
		\map{\mf\times\mg}[spar,qpull]{\dgcatD[red]}
		=
		\dgcatD[red]\circ\map{\mf\times\mg}[spar]
	}\).
%	Notice the augmentation of~\( \dgcatC \), which adds the
%	extra
%	data
%	of a morphism~\( \malpha[\vx] \in \dgcatD[d=n]{\mf{\vx},\mg{\vx}} \)
%	for all~\( \vx \in \catob{\dgcatC} \).
	This allows us to define an internal
	%hom~\( \cathom[\catquiv[dg]] \)
	%with~\( \cathom[\catquiv[dg]] { \dgcatC , \dgcatD } \)
	hom in the category~\( \catstquiv \),
	denoted~\( \cathom[\catstquiv[aug]] { \dgcatC , \dgcatD } \),
	with objects the morphisms~\( \dgcatC \to \dgcatD \)
	and morphism %complex~\( \cathom[\catquiv,*] { \dgcatC , \dgcatD } \)
	space
	whose reduction
%	~\(
%		\cathom[\catstquiv[aug]]{\dgcatC,\dgcatD}[red,smash]
%	\)
	consists of natural transformations,
	i.e.
	\begin{align*}
		\cathom[\catstquiv[aug]]{\dgcatC,\dgcatD}[red,d=n]{\mf,\mg}
			&=
			\Hom[\catstquiv{E}]{
				\dgcatC,
				{ \map{\mf\times\mg}[spar,qpull]{\dgcatD[red]}[shift=n] }
			}
	\\
			&=
			\Prod[\vx,\vy\in\setobjE]{
				\Hom[d=n,\ringk,par=\big]{
					\dgcatC[*]{ \vx , \vy } ,
					\dgcatD[red,*]{ \mf{ \vx } , \mg{ \vy } }
				}
			}
	.
	\end{align*}
%	The differential~\( \dif { \malpha } \) is applied componentwise,
%	i.e.~\(
%		\dif { \malpha }
%		\colon
%		\dgcatC[*] { \vx , \vy }
%		\to
%		\dgcatD[*] { \mf { \vx } , \mg { \vy } }
%	\)
%	The differential is applied componentwise.
%	If \( \dgcatC \) and \( \dgcatD \) are augmented,
%	a \textdef{degree~\( n \) natural transformation of augmented graded quivers} is a natural transformation of quivers
%	taking~\( \Ker{\mepsilon} \) to~\( \Ker{\mepsilon} \)
	In the dg-case, the differential is applied componentwise.
	This turns~\( \catstquiv[aug] \) into a closed
	monoidal category, since we have
	\[
		\Hom[\catstquiv[aug]]{
			\dgcatC\tens[\ringk]\dgcatD,
			\dgcatE
		}
		\cong
		\Hom[\catstquiv[aug]]{
			\dgcatC,
			\cathom[\catstquiv[aug]]{
				\dgcatD,
				\dgcatE
			}
		}
	\]
	\parencite[Lemma~5.1]{keller}.

\section{Graded categories and dg-categories}%
\label{para:dg-categories_as_dg-algebras}

	A \textdef{\cocatstar-category} with object set~\( \setobjE \) is an associative algebra in the monoidal
	category~\(
		\tup {
			%{\catcom{\ringk}[diag=\setobjE\times\setobjE]} ,
			\catstquiv { E },
			\tens[\ringk[\setobjE]],
			\ringk[\setobjE]
		}
		=
		\tup{
			\catxstmodx{\ringk[\setobjE]}{\ringk[\setobjE]},
			\tens[\ringk[\setobjE]],
			\ringk[\setobjE]
		}
	\).
	This means exactly that we have a composition
	operation~\( \mult \colon \dgcatA\tens[\ringk[\setobjE]]\dgcatA\to \dgcatA \)
	satisfying associativity.
	Notice that 
%	the multiplication map
%	\( \dgcatA\tens[\ringk[\setobjE]]\dgcatA \to \dgcatA \)
	this
	splits into components
	\( \dgcatA{t,u}\tens \dgcatA{s,t}\to \dgcatA{s,u} \)
	and becomes a composition operation in the categorical sense.
	The \cocatstar-category~\( \dgcatA \) is \textdef{unital} if it is unital
	as an algebra in the above sense, i.e. if
	it is also equipped with a unit map~\( \unitmap \colon \ringk[\setobjE]\to\dgcatA \)
	satisfying the usual the unity axiom.
	By a “\cocatstar-category”, we shall mean a unital \cocatstar-category.
	The category of such will be denoted~\( \catstalg{\ringk[\setobjE]} \).
%	Indeed,
%	Forgetting the grading, we may
%	regard~\( \catcom{\ringk}[diag=\setobjE\times\setobjE] \)
%	as a subcategory
%	of~\( \catcom{\ringk[\setobjE]\tens[\ringk]\ringk[\setobjE]} \),
%	so that any dg-category is in fact a dg-algebra
%	over~\( \ringk[\setobjE]\tens[\ringk]\ringk[\setobjE] \).
%	Notice that each such algebra is a bimodule over~\( \ringk[\setobjE] \) and therefore a module
%	over~\(
%		\ringk[\setobjE,spar,op] \tens[\ringk] \ringk[\setobjE]
%		=
%		\ringk[\setobjE] \tens[\ringk] \ringk[\setobjE]
%	\).
%	We may therefore forget the grading and
%	regard~\( \catcom{\ringk}[diag=\setobjE\times\setobjE] \)
%	as a non-full subcategory of~\( \catcom{\ringk[\setobjE] \tens[\ringk] \ringk[\setobjE]} \).
%	This inclusion is monoidal, so we may regard any algebra
%	in~\( \catcom{\ringk}[diag=\setobjE\times\setobjE] \)
%	as a dg-algebra over~\( \ringk[\setobjE] \tens[\ringk] \ringk[\setobjE] \).
%	A \textdef{\cocatstar-algebra} is a \cocatstar-category
%	with a single object.
	The \textdef{opposite category}~\( \dgcatA[op] \)
	of \cocatstar-category~\( \dgcatA \) has the same objects and
	morphism spaces as~\( \dgcatA \), but
	\( \mf \circ \mg \)~in~\( \dgcatA[op] \)
	is defined to be~\( (-1)^{\dgcatdeg{\mf}\dgcatdeg{\mg}}\mg\circ\mf \)
	in~\( \dgcatA \).

	We want to turn the category of all \cocatstar-categories,
	for varying sets~\( \setobjE \),
	into a category.
	If \( \mf \colon \setobjE \to \setobjF \)~is a map of sets,
	we obtain
	a functor~\(
		\mf[qpull]
		\colon
		\catxstmodx{\ringk[\setobjF]}{\ringk[\setobjF]}
		\to
		\catxstmodx{\ringk[\setobjE]}{\ringk[\setobjE]}
	\)
	given by restriction.
	This functor is lax monoidal:
	Indeed, we obtain a map~\( \ringk[\setobjE]\to\mf[qpull]{\ringk[\setobjF]} \)
	in~\( \catstalg{\ringk[\setobjE]} \),
	and
	if \( \algM,\algN \in \catxstmodx{\ringk[\setobjF]}{\ringk[\setobjF]} \),
	we obtain maps~\(
		\mf[qpull]{\algM}
		\tens[\ringk[\setobjE]]
		\mf[qpull]{\algN}
		\to
		\mf[qpull,par]{ \algM \tens[\ringk[\setobjF]] \algN }
	\).
	This implies that \( \mf[qpull] \)~induces
	a pullback functor~\(
		\mf[qpull]
		\colon
		\catstalg{\ringk[\setobjF]}
		\to
		\catstalg{\ringk[\setobjE]}
	\).
	Given \cocatstar-categories \( \dgcatA \in \catstalg{\ringk[\setobjE]} \)
	and~\( \dgcatB \in \catstalg{\ringk[\setobjF]} \)
	with different sets of objects,
	we can thus define
	a \textdef{\cocatstar-functor}~\( \mF\colon\dgcatA\to\dgcatB \)
	between \cocatstar-categories to be a
	map of sets~\( \mF \colon \catob{\dgcatA}\to\catob{\dgcatB} \)
	along with a map~\( \mF \colon \dgcatA \to \mF[qpull]{\dgcatB} \)
	in~\( \catstalg{\ringk[\setobjE]} \).
	The \cocatstar-functors form the morphisms
	in the category~\( \catstcat = \catstcat{\ringk} \)
	of all \cocatstar-categories over~\( \ringk \).

	The category of \cocatstar-categories is monoidal:
	The tensor product~\( \dgcatA \tens[\ringk] \dgcatB \)
	has objects~\( \catob{\dgcatA}\times\catob{\dgcatB} \),
	and we write a double~\( \tup{a,b} \)
	as~\( a\tens b \). The morphism space is given by
	\[
		\alg{\dgcatA\tens[\ringk]\dgcatB}[spar,arg={a\tens a',b\tens b'}]
		=
		\dgcatA{a,a'}
		\tens[\ringk]
		\dgcatB{b,b'}
		.
	\]
	The composition is given by
	\[
		(\mf[prime]\tens\mg[prime])
		\circ
		(\mf\tens\mg)
		=
		(-1)^{\dgcatdeg{\mg[prime]}\dgcatdeg{\mf}}
		(\mf[prime]\circ\mf)
		\tens
		(\mg[prime]\circ\mg)
		.
	\]
	It is in fact a closed monoidal category:
	The internal hom~\( \catstfun{\dgcatA,\dgcatB} \)
	has objects the set of \cocatstar-functors~\(\mF\colon\dgcatA\to\dgcatB \).
	The morphism space~\( \catstfun{\dgcatA,\dgcatB}{\mF,\mG} \)
	consists of \textdef{natural \cocatstar-transformations}.
	A natural \cocatstar-transformation~\( \malpha\colon\mF\to\mG \)
	of degree~\( d \)
	consists of a collection~\( \malpha = \tup{\malpha[\vx]}[\vx\in\dgcatA] \)
	of maps~\( \malpha[x] \in \dgcatB[d=d]{\mF{\vx},\mG{\vx}} \)
	such that~\(
		\malpha[y] \circ \mF{\mf}
		=
		(-1)^{dk}
		\mG{\mf} \circ \malpha[x]
	\)
	for all~\( \mf \in \dgcatA[d=k]{\vx,\vy} \).
	Composition is applied componentwise,~\(
		\map{\mbeta\circ\malpha}[spar,x]
		=
		\mbeta[x] \circ \malpha[x]
	\).
	In the dg-case, the differential is also applied componentwise,
	i.e.~\( \dif{\malpha}[spar,\vx] = \dif[par]{\malpha[x]} \).

	%We define the category~\( \catxstmod{\dgcatA} \) of \textdef{left \( \dgcatA \)-\cocatstar-modules}
	A \textdef{left \( \dgcatA \)-\cocatstar-module}
%	to be the category of left modules~\( \algM \in \catxstmod{\ringk[\setobjE]} \)
	is a left module~\( \algM \in \catxstmod{\ringk[\setobjE]} \)
	over the algebra~\( \dgcatA \in \catxstmodx{\ringk[\setobjE]}{\ringk[\setobjE]} \) in the categorical sense.
	Thus we require a bifunctor~\(
		\dgcatA
		\tens[\ringk[\setobjE]]
		\algM
		\to
		\algM
	\)
	satisfying the usual associativity and unity conditions,
	and morphisms must respect this structure.
	By the closed monoidal structure, this is the same
	as a \cocatstar-functor~\( \dgcatA\to\catxstmod{\ringk} \),
	which allows us to define
	the \cocatstar-category~\(
		\catxstmod{\dgcatA}
		=
		\catstfun{\dgcatA,\catxstmod{\ringk}}
	\) of left \( \dgcatA \)-modules.
	Similarly, the
	%category~\( \catstmodx{\dgcatA} \) of \textdef{right \( \dgcatA \)-\cocatstar-modules}
%	is the category of right modules~\(
%		\algN \in \catstmodx{\ringk[\setobjE]}
%	\).
%	If \( \dgcatB \)~is another dg-category with object set~\( \setobjF \), 
%	the category of \textdef{\(\dgcatA\)--\( \dgcatB \%)-\cocatstar-bimodules}
%	is the category of bimodules in~\( \catxstmodx{\ringk[\setobjE]}{\ringk[\setobjF]} \).
	a \textdef{right \( \dgcatA \)-\cocatstar-module}
	is a right module~\( \algN \in \catstmodx{\ringk[\setobjE]} \)
	in the categorical sense,
	and the \cocatstar-category of such is~\( \catstmodx{\dgcatA} = \catstfun{\dgcatA[op],\catxstmod{\ringk}} \).
	If \( \dgcatB \)~is another \cocatstar-category with object set~\( \setobjF \), 
	an \textdef{\(\dgcatA\)--\( \dgcatB \)-\cocatstar-bimodules}
	is a bimodule in the category~\( \catxstmodx{\ringk[\setobjE]}{\ringk[\setobjF]} \),
	and the \cocatstar-category of such
	is~\(
		\catxstmodx{\dgcatA}{\dgcatB}
		=
		\catstfun{\dgcatA\tens[\ringk]\dgcatB[op],\catxstmod{\ringk}}
	\).
	We obtain a paring
	\begin{align*}
		\tens[\dgcatA]
		\colon
		\catstmodx{\dgcatA}
		\tens[\ringk]
		\catxstmod{\dgcatA}
		&\longto
		\catxstmod{\ringk}
	\\
		\tup{\algM,\algN}
		&
		\longmapsto
		\Coeq{
			\algM
			\tens[\ringk[\setobjE]]
			\dgcatA
			\tens[\ringk[\setobjE]]
			\algN
			\rightrightarrows
			\algM
			\tens[\ringk[\setobjE]]
			\algN
		}
	\end{align*}
	which extends to a paring
	\[
		\tens[\dgcatA]
		\colon
		\catxstmodx{\dgcatC}{\dgcatA}
		\tens[\ringk]
		\catxstmodx{\dgcatA}{\dgcatB}
		\longto
		\catxstmodx{\dgcatC}{\dgcatB}
		.
	\]
	In particular, we obtain a monoidal
	category~\(
		\tup{
			\catxstmodx{\dgcatA}{\dgcatA},
			\tens[\dgcatA]
		}
	\).
	The unit is~\( \dgcatA \), regarded as an \( \dgcatA \)--\( \dgcatA \)-bimodule.
	
%\end{parabreak}

\newalgmap\dgalgA{A}
\newalgmap\dgalgB{B}

%\begin{parabreak}[Graded algebras and dg-algebras over categories]

\section{Graded algebras and dg-algebras over categories}

	Suppose as above that \( \dgcatA \)~is a \cocatstar-category.
	A (unital) \textdef{\cocatstar-algebra} over~\( \dgcatA \)
	is a (unital) associative algebra in the monoidal
	category~\(
		\tup{
			\catxstmodx{\dgcatA}{\dgcatA} ,
			\tens[\dgcatA],
			\dgcatA
		}
	\).
	A \cocatstar-algebra is assumed to be unital unless explicitly stated otherwise. The category of \( \dgcatA \)-\cocatstar-algebras
	is denoted~\( \catstalg{\dgcatA} \).
	We note that the category of \cocatstar-categories
	with object~\( \setobjE \)
	is equal to~\( \catstalg{\ringk[\setobjE]} \),
	so the notation is consistent with the one defined above.
	The category of non-unital \cocatstar-algebras
	over~\( \dgcatA \)
	is denoted~\( \catstalg[nu]{\dgcatA} \).

	Above we defined morphisms between \cocatstar-categories
	with different objects.
	We can more generally define morphisms between \cocatstar-algebras
	over different \cocatstar-categories.
	If \( \mf \colon \dgcatA[1] \to \dgcatA[2] \)~is
	a \cocatstar-functor between \cocatstar-categories,
	we obtain a restriction functor~\(
		\mf[qpull]
		\colon
		\catxstmodx{\dgcatA[2]}{\dgcatA[2]}
		\to
		\catxstmodx{\dgcatA[1]}{\dgcatA[1]}
	\).
	We claim that this functor is lax monoidal,
	i.e.\ that we have maps~\( \dgcatA[1]\to\mf[qpull]{\dgcatA[2]} \)
	and~\(
		\mf[qpull]{\algM}
		\tens[\dgcatA[1]]
		\mf[qpull]{\algN}
		\to
		\mf[qpull,par]{\algM\tens[\dgcatA[2]]\algN}
	\)
	in the category~\( \catxstmodx{\dgcatA[1]}{\dgcatA[1]} \)
	for all~\( \algM , \algN \in \catxstmodx{\dgcatA[2]}{\dgcatA[2]} \),
	satisfying the usual conditions.
	The first map comes from the definition of a \cocatstar-functor,
%	but for the time being, it is only a map
%	in~\( \catxstmodx{\ringk[\catob{\dgcatA}]}{\ringk[\catob{\dgcatA}]} \).
	noting that it is in fact a map
	in~\( \catxstmodx{\dgcatA[1]}{\dgcatA[1]} \).
	The construction of the second map follows from the universal
	property of the tensor product (we write \( \setobjE[1] = \catob{\dgcatA[1]} \) and~\( \setobjE[2] = \catob{\dgcatA[2]} \)):
	\[\begin{tikzcd}[column sep=small]
		\displaystyle
		\mf[qpull]{\algM}
		\tens[limits,\mathclap{\ringk[\setobjE[1]]},return,bin]
		\dgcatA[1]
		\tens[limits,\mathclap{\ringk[\setobjE[1]]},return,bin]
		\mf[qpull]{\algN}
		\ar[r]
			&
			\mf[qpull]{\algM}
			\tens[limits,\mathclap{\ringk[\setobjE[1]]},return,bin]
			\mf[qpull]{\dgcatA[2]}
			\tens[limits,\mathclap{\ringk[\setobjE[1]]},return,bin]
			\mf[qpull]{\algN}
			\ar[r,yshift=1.5pt]
			\ar[r,yshift=-1.5pt]
			\ar[d]
				&
				\mf[qpull]{\algM}
				\tens[limits,\mathclap{\ringk[\setobjE[1]]},return,bin]
				\mf[qpull]{\algN}
				\ar[r]
				\ar[d]
					&
					\mf[qpull]{\algM}
					\tens[limits,\mathclap{\dgcatA[1]},return,bin]
					\mf[qpull]{\algN}
					\ar[d,dashed]
	\\
			&
			\mf[qpull,par]{{
				\algM
				\tens[limits,\mathclap{\ringk[\setobjE[2]]},return,bin]
				\dgcatB
				\tens[limits,\mathclap{\ringk[\setobjE[2]]},return,bin]
				\algN
			}}
			\ar[r,yshift=1.5pt]
			\ar[r,yshift=-1.5pt]
				&
				\mf[qpull,par]{{
					\algM
					\tens[limits,\mathclap{\ringk[\setobjE[2]]},return,bin]
					\algN
				}}
				\ar[r]
					&
					\mf[qpull,par]{{
						\algM
						\tens[limits,\mathclap{\dgcatA[2]},return,bin]
						\algN
					}}
	\invdot
	\end{tikzcd}\]
%	The unit map

%	The map~\( \dgcatA \to \mf[qpull]{\dgcatB} \)
%	in~\( abekat \)
%	exists by definition of a \cocatstar-functor.
%	the universal property of the tensor product
%	shows that we obtain
%	a lax monoidal functor~\(
%		\mf[qpull]\colon \catstalg{\dgcatB}\to\catstalg{\dgcatA}
%	\).
	Since the functor is lax monoidal,
	it induces a functor~\(
		\mf[qpull]\colon \catstalg{\dgcatA[2]}\to\catstalg{\dgcatA[1]}
	\).
	If \( \dgcatB[1] \in \catstalg{\dgcatA[1]} \)
	and~\( \dgcatB[2] \in \catstalg{\dgcatA[2]} \)
	are \cocatstar-algebras over two different \cocatstar-categories,
	a \textdef{morphism of \cocatstar-algebras}~\(
		\mf
		\colon
		\tup{\dgcatB[1],\dgcatA[1]}
		\to
		\tup{\dgcatB[2],\dgcatA[2]}
	\)
	consists of a \cocatstar-functor~\( \mf\colon\dgcatA[1]\to\dgcatA[2] \)
	along with a morphism of
	algebras~\(
		\dgcatB[1]\to\mf[qpull]{\dgcatB[2]}
	\)
	in~\( \catstalg{\dgcatA[1]} \).
	In particular,
	restricting along the unit~\(
		\mf=\unitmap\colon\ringk[\catob{\dgcatA}]\to\dgcatA
	\),
%	we obtain that an \( \dgcatA \)-algebra
%	is a \cocatstar-category with objects~\( \catob{\dgcatA} \).
	we obtain an embedding~\(
		\catstalg{\dgcatA}
		\into
		\catstalg{\ringk[\catob{\dgcatA}]}
	\).
	Thus we may equivalently
	regard the category of \cocatstar-categories
	as consisting of the collection of all \cocatstar-algebras
	over all \cocatstar-categories,
	each~\( \dgcatB \in \catstalg{\dgcatA} \)
	being identified with its image in~\( \catstalg{\ringk[\catob{\dgcatA}]} \).
	This somewhat circular-looking definition will be
	the one we shall later mimic in our other definitions.

	A (unital) \cocatstar-algebra~\( \dgcatB \)
	over a \cocatstar-category~\( \dgcatA \)
	is~\textdef{augmented}
	if there exists a morphism of
	\(\dgcatA\)-\cocatstar-algebras~\( \dgcatB \to \dgcatA \)
	such that
	\(
		\dgcatA
		\xto{\unitmap}
		\dgcatB
		\xto{\aug}
		\dgcatA
	\)~is the identity.
%	Then \( \dgcatA \)~is in particular augmented as a \cocatstar-quiver,
%	the reduction~\( \dgcatA[red] \)
%	is a non-unital \cocatstar-category,
%	and we have~\( \dgcatA = \ringk[\setobjE] \oplus \dgcatA[red] \).
	In this case, the reduction is given by~\( \dgcatB[red,smash] = \Ker{\aug} \),
	and we obtain the splitting~\( \dgcatB = \dgcatB[red,smash]\oplus\dgcatA \) in~\( \catxstmodx{\dgcatA}{\dgcatA} \).
	Conversely, if \( \dgcatB \)~is a non-augmented, non-unital \cocatstar-algebra, its \textdef{augmentation}
	is the augmented \cocatstar-algebra~\(
		\dgcatB[augment]
		=
		\dgcatB
		\oplus
		\dgcatA
	\)
	with augmentation given by the projection~\(
		\dgcatB[augment]
		\to
		\dgcatB
	\).
	The composition map~\(
		\mult[\dgcatB[augment]]
		\colon
		\dgcatB[augment]
		\tens[\dgcatA]
		\dgcatB[augment]
		\to
		\dgcatB[augment]
	\)
	is given by~\(
		\mult[\dgcatB[augment]]
		=
		\aug\tens\id[\dgcatB]
		+
		\mult[\dgcatB]
		+
		\mult[\dgcatA]
		+
		\id[\dgcatB]\tens\aug
	\).
	A \textdef{morphism of augmented \cocatstar-algebras over~\(\dgcatA\)}
	is a morphism of unital \cocatstar-algebras
	which commutes with the augmentation.
	Thus we obtain a
	category~\( \catstalg[aug]{\dgcatA} \)
	of augmented \( \dgcatA \)-\cocatstar-algebras.
	Given
	\( \dgcatB[1] \in \catstalg[aug]{\dgcatA[1]} \)
	and~\( \dgcatB[2] \in \catstalg[aug]{\dgcatA[2]} \),
	a morphism~\(
		\mf
		\colon
		\tup{\dgcatB[1],\dgcatA[1]}
		\to
		\tup{\dgcatB[2],\dgcatA[2]}
	\)
	of augmented \cocatstar-algebras over different \mbox{\cocatstar-categories}
	is a morphism of unital algebras making the square
	\[\begin{tikzcd}[sep=scriptsize]
		\dgcatB[1] \ar[r] \ar[d]
			& \mf[qpull]{\dgcatB[2]} \ar[d]
	\\
		\dgcatA[1] \ar[r]
			& \mf[qpull]{\dgcatA[2]}
	\end{tikzcd}\]
	commutative.
	Equivalently, it
	consists of a \cocatstar-functor~\( \mf \colon \dgcatA[1]\to\dgcatA[2] \)
	together with a morphism~\(
		\mf
		\colon
		\dgcatB[1]
		\to
		\mf[qpull]{ \dgcatB[2,red] }[spar,augment,smash]
	\)
	in~\( \catstalg[aug]{\dgcatA[1]} \).

	In the case~\( \dgcatA = \ringk[\setobjE] \),
	we obtain the definition of an augmented \cocatstar-category.
	If \( \dgcatB \)~is an augmented \cocatstar-category, it is in particular augmented as a \cocatstar-quiver.

\subsection{Tensor algebra}

	It is possible to freely generate a non-unital \( \dgcatA \)-\cocatstar-algebra from an arbitrary \( \dgcatA \)-\cocatstar-bimodule~\( \algV \), namely the \textdef{non-unital tensor category}~\( \tensoralg[red]{\algV}\in\catstalg{ \dgcatA } \) given by
	\[
		\tensoralg[red]{\algV}
		=
		\dirsum[n\ge1] {
			\algV
			\tens[\dgcatA]
			\algV
			\tens[\dgcatA]
			\cdots
			\tens[\dgcatA]
			\algV
		}
		\qquad\text{(\( n \) factors),}
	\]
	equipped with the multiplication
	map~\(
		\mult
		\colon
		\tensoralg[red]{\algV}
		\tens[\dgcatA]
		\tensoralg[red]{\algV}
		\to
		\tensoralg[red]{\algV}
	\)
	given by
	\(
		\mult{{
			(\vx[n]\tens\cdots\tens\vx[i+1])
			\tens
			(\vx[i]\tens\cdots\tens\vx[1])
		}}
		=
		\vx[n]\tens\cdots\tens\vx[i+1]
		\tens
		\vx[i]\tens\cdots\tens\vx[1]
	\).
	A non-unital \cocatstar-algebra is called \textdef{free} if it is (isomorphic to a \cocatstar-algebra) of this form.
%	It is augmented, and it therefore
%	makes sense to talk of the reduced tensor category~\( \tensoralg[red] { \dgcatC } \).
	One may also define the \textdef{tensor algebra}~\( \tensoralg{\algV} = \tensoralg[red]{\algV}[augment] \).
	A unital \cocatstar-algebra is called \textdef{free}
	if it is (isomorphic to a \cocatstar-algebra)
	of this form.
	%where the summation instead runs over all~\( n\ge 0 \).\fxfatal{NO!! Define how to augment an algebra, then define \( T \) via this.} It is unital and augmented, and its reduction is exactly~\( \tensoralg[red]{\dgcatC} \),
	%as suggested by the notation.
%
%	All of the above constructions may be carried out over~\( \catquiv[gr] \) instead, and we obtain graded categories,~\( \catcat[gr] \).
	The proposition below shows why the term “free” makes sense.
	%We state it for \cocatstar-categories, but an analogous result holds for \cocatstar-algebras over \cocatstar-categories.
%	,
%	as \( \tensoralg \)~is left adjoint to the forgetful
%	functor~\( \catstcat\to \catstquiv \):
	
	If \( \mf , \mg \colon \tup{\dgcatB[1],\dgcatA[1]} \to \tup{\dgcatB[2],\dgcatA[2]} \)~are morphisms of \cocatstar-algebras,
	we define an \textdef{\(\tup{\mf,\mg}\)-derivation} of degree~\( i \)
	%on a \cocatstar-category~\( \dgcatA \)
	to be a map~\( \map{D}\colon\dgcatB[1]\to\map{\mf\times\mg}[spar,qpull]{\dgcatB[2]}[shift=i] \)
	%in~\( \catstquiv{\catob{\dgcatA}} \)
	in~\( \catxstmodx{\dgcatA}{\dgcatA} \)
	such that~\(
		\map{D}\circ\mult
		=
		\map{\mf\times\mg}[spar,qpull]{\mult}\circ(\mf\tens\map{D} + \map{D}\tens\mg)
	\).
	We write~\( \Der[d=i]{\mf,\mg} \) for the set of such.
	A \textdef{derivation} is an \( \tup{\id,\id} \)-derivation.
	A dg-algebra may be considered as a graded algebra
	equipped with a derivation~\( \dif \) satisfying~\( \dif[squared]=0 \).
	
	\begin{propositionbreak}[Proposition
	{\parencite[Lemme~1.1.2.1]{lh}}]
		\begin{propositionlist}
			\item\label{res:free_morphisms}
			If \( \dgcatA \)~is a \cocatstar-category,
			\( \algV \in \catxstmodx{\dgcatA}{\dgcatA} \),
			and~\( \dgcatB \in \catstalg[aug]{\dgcatA} \),
%			If \( \algV \)~is a \cocatstar-quiver
%			and~\( \dgcatA \) a \cocatstar-category,
			we have an isomorphism of sets
			\[
				\Hom[\catstalg[aug]{\dgcatA}]{\tensoralg{\algV},\dgcatB}
				\longisoto
				\Hom[\catxstmodx{\dgcatA}{\dgcatA}]{\algV,\dgcatB[red]}
			\]
			given by precomposition with the
			inclusion~\( \algV\into\tensoralg{\algV} \).
			The inverse map takes the map of bimodules~\( \mf \colon \algV \to \dgcatB[red,smash] \)
	%		whose \( n \)th~component is~\(
	%			\dgcatC[tens=n]
	%			\xto{\mf[tens=n]}
	%			\dgcatA[tens=n]
	%			\to
	%			\dgcat
	%		\)
			to the map
			\[
				\Sum[n\ge0]{ \mult[iterate=n]\circ\mf[tens=n] }
				\colon
				\tensoralg{\algV}\to\dgcatB
			.\]
			Here \( \mult[iterate=n] \)~denotes the \( n \)th~iterate of~\( \mult \).
			\item\label{res:derivations_free}
%			If \( \algV \)~is a \cocatstar-quiver
%			and~\( \dgcatA \) a \cocatstar-category,
			If \( \dgcatA[1] , \dgcatA[2] \)~are \cocatstar-categories,
			\( \dgcatB[2] \in \catstalg{\dgcatA[2]} \),
			\( \algV \in \catxstmodx{\dgcatA[1]}{\dgcatA[1]} \),
			and we are given two maps
			of \cocatstar-algebras~\(
				\mf , \mg
				\colon
				\tup{ \tensoralg{\algV},\dgcatA[1]}
				\to
				\tup{ \dgcatB[2],\dgcatA[2]}
			\),
			precomposition with the inclusion~\(
%				\dgcatA[1]
%				\oplus
				\algV
				\into
				\tensoralg{\algV}
			\)
			yields an isomorphism of sets
			\[
				\Der[d=i]{\mf,\mg}%{\tensoralg[red]{\algV},\dgcatA}
				\longisoto
				\Hom[\catxstmodx{\dgcatA}{\dgcatA}]{
					\algV,
					{\map{\mf\times\mg}[spar,qpull]{\dgcatB[2]}[shift=i]}
				}.
			\]
			The map in the opposite direction takes
			a map~\(
				\map{h} %= \map{h}[\dgcatA[1]]+\map{h}[\algV]
				\colon
%				\dgcatA[1]
%				\oplus
				\algV
				\to
				\map{\mf\times\mg}[spar,qpull]{\dgcatB}[2,shift=i]
			\)
			to the \( \tup{\mf,\mg} \)-derivation~\( \tensoralg{\algV}\to\dgcatB[2,shift=i] \)
			whose \( n \)th 
			%component,~\( n\ge1 \), is
			component is
			\[
				\mult[iterate=n]
				\circ
				\map{
					\Sum[from={i+1+j=n}]{
						\mf[tens=i]
						\tens
						\map{h}
						\tens
						\mg[tens=j]
					}
				}[spar=\big]
				.
			\]
%			and whose \( 0 \)th~component is~\(
%				\map{h}[\dgcatA[1]]
%				\colon
%				\dgcatA[1]
%				\to
%				\map{\mf\times\mg}[spar,qpull]{\dgcatB[2]}[shift=i]
%			\).
	\end{propositionlist}
	\end{propositionbreak}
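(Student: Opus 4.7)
The plan is to verify both parts by the standard universal-property arguments for tensor algebras, reduced to checking that the stated inverse formulas really land in the right space and really invert the restriction map. The only subtlety is bookkeeping over two different \cocatstar-categories \( \dgcatA[1] , \dgcatA[2] \) in part~\localref{res:derivations_free}, but once the correct underlying bimodule category is identified this reduces to the single-base case.

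For part~\localref{res:free_morphisms}, I would first note that the restriction map is well-defined because the image of \( \algV \) under any augmented algebra map \( \tensoralg{\algV}\to\dgcatB \) lies in \( \dgcatB[red,smash] \) (since \( \algV \) lies in \( \tensoralg[red]{\algV} = \Ker{\aug[{\tensoralg{\algV}}]} \)). To build the inverse, given \( \mf \colon \algV \to \dgcatB[red,smash] \) in \( \catxstmodx{\dgcatA}{\dgcatA} \), I extend by \( \mult[iterate=n]\circ\mf[tens=n] \) on \( \algV[tens=n] \) for \( n \ge 1 \) and by \( \unitmap \) on \( \dgcatA \). Multiplicativity is checked on a pure tensor \( (\vx[n]\tens\cdots\tens\vx[i+1])\tens(\vx[i]\tens\cdots\tens\vx[1]) \) and reduces to associativity of \( \mult \) in~\( \dgcatB \). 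That the two constructions are mutually inverse is immediate: one composition is the identity on \( \algV \) by construction, and the other is the identity on \( \tensoralg{\algV} \) by induction on tensor degree, using the definition of \( \tensoralg{\algV} \) as being generated under \( \mult \) by \( \algV \) and the unit.

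For part~\localref{res:derivations_free}, the restriction map is again well-defined by composing a derivation \( \map{D}\colon\tensoralg{\algV}\to\map{\mf\times\mg}[spar,qpull]{\dgcatB[2]}[shift=i] \) with the inclusion \( \algV\into\tensoralg{\algV} \). The construction of the inverse and the proof of invertibility proceed by induction on tensor degree: the Leibniz-type axiom \( \map{D}\circ\mult = \map{\mf\times\mg}[spar,qpull]{\mult}\circ(\mf\tens\map{D}+\map{D}\tens\mg) \) forces the value of \( \map{D} \) on a product \( \vx\tens\vy\in\algV[tens=n] \) (where \( \vx\in\algV[tens=p] \), \( \vy\in\algV[tens=q] \), \( p+q=n \)) to be \( \mf{\vx}\cdot\map{D}{\vy}+\map{D}{\vx}\cdot\mg{\vy} \). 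Unrolling this recursion gives precisely the formula~\( \mult[iterate=n]\circ\sum \mf[tens=i]\tens\map{h}\tens\mg[tens=j] \) on \( \algV[tens=n] \); conversely, a direct calculation using associativity of \( \mult \) and the fact that \( \mf \) and \( \mg \) are algebra maps shows that the map built from any \( \map{h} \) satisfies the derivation axiom.

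The main obstacle I anticipate is purely notational: tracking signs from the bijections~\eqref{eq:coleibniz} and~\eqref{eq:leibniz} together with the Koszul rule when commuting \( \map{D} \) of degree~\( i \) past \( \mf \) or \( \mg \); the verification of the Leibniz rule for the extension must include the sign~\( (-1)^{i\dgcatdeg{\vx}} \) in the middle term, and checking that the recursion closes up correctly with these signs is the only place where care is needed. Everything else is a straightforward induction on tensor length, and the compatibility with the two bases \( \dgcatA[1], \dgcatA[2] \) is automatic because the derivation lives in the restricted bimodule~\( \map{\mf\times\mg}[spar,qpull]{\dgcatB[2]} \) over~\( \dgcatA[1] \), where both \( \tens[\dgcatA[1]] \) and the relevant module structure are the natural ones.
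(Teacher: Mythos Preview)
Your proposal is correct and is the standard universal-property argument for tensor algebras. Note, however, that the paper does not actually prove this proposition: it is stated with a citation to \textcite[Lemme~1.1.2.1]{lh} and no proof is given in the paper itself, so there is nothing to compare your argument against. Your approach is precisely what one finds in the cited source.
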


\section{Graded coalgebras and dg-coalgebras}

	If \( \dgcatA \)~is a \cocatstar-category,
	the category of \textdef{\cocatstar-coalgebras} over~\(\dgcatA \)
	is the category of coalgebras~\( \tup{\dgcatC,\comult} \) in the monoidal
	category~\(
		\tup{
			\catxstmodx{\dgcatA}{\dgcatA},
			\tens[\dgcatA],
			\dgcatA
		}
	\).
	In other words, we require a
	comultiplication map~\(
		\comult
		\colon
		\dgcatC
		\to
		\dgcatC \tens[\dgcatA] \dgcatC
	\) satisfying the usual coassociativity axiom.
	It is \textdef{counital}
	if we are also given a counit map~\(
		\counit\colon \dgcatC\to\dgcatA 
	\)
	satisfying the counit axiom.
	We shall assume by default that \cocatstar-coalgebras are counital.
	A morphism of counital \( \dgcatA \)-\cocatstar-coalgebras
	is a morphism that commutes with the unit.
	We call~\( \dgcatC \) \textdef{cocomplete}
	if any element is annihilated by sufficiently many iterations
	of~\( \comult \),
	where by an “element”, we mean any
	morphism in~\( \dgcatC{a,a'} \) for~\( a,a' \in \catob{\dgcatA} \).
	The category of cocomplete, counital \cocatstar-coalgebras
	over~\( \dgcatA \)
	is denoted~\( \catstcoalg{\dgcatA} \).
	The category of cocomplete, non-counital \cocatstar-coalgebras
	over~\( \dgcatA \)
	is denoted~\( \catstcoalg[ncu]{\dgcatA} \).

	If \( \mf \colon \dgcatA[1] \to \dgcatA[2] \)~is
	a \cocatstar-functor, the restriction functor~\(
		\mf[qpull]
		\colon
		\catxstmodx{\dgcatA[2]}{\dgcatA[2]}
		\to
		\catxstmodx{\dgcatA[1]}{\dgcatA[1]}
	\)
	has a left adjoint~\(
		\mf[q!]
		\colon
		\catxstmodx{\dgcatA[1]}{\dgcatA[1]}
		\to
		\catxstmodx{\dgcatA[2]}{\dgcatA[2]}
	\)
	given by
	\[
		\mf[q!]{\algM}
		=
		\dgcatA[2]
		\tens[\dgcatA[1]]
		\algM
		\tens[\dgcatA[1]]
		\dgcatA[2]
		.
	\]
	It is oplax monoidal:
	Indeed, by adjunction, we obtain a
	map~\( \mf[q!]{\dgcatA[1]} \to \dgcatA[2] \)
	in~\( \catxstmodx{\dgcatA[2]}{\dgcatA[2]} \),
	and if~\( \algM , \algN \in \catxstmodx{\dgcatA[1]}{\dgcatA[1]} \),
	the unit of adjunction provides us with a
	map~\(
		\algM
		\tens[\dgcatA[1]]
		\algN
		\to
		\mf[qpull]{ \mf[q!]{ \algM } }
		\tens[\dgcatA[1]]
		\mf[qpull]{ \mf[q!] { \algN } }
		\to
		\mf[qpull,par]{
			\mf[q!]{\algM}
			\tens[\dgcatA[2]]
			\mf[q!]{\algN}
		}
	\),
	which by adjunction is the same as a
	map~\(
		\mf[q!,par]{ \algM \tens[\dgcatA[1]] \algN }
		\to
		\mf[q!]{ \algM } \tens[\dgcatA[2]] \mf[q!] { \algN }
	\).
	This implies that \( \mf[!] \)~descends to a
	functor~\(
		\mf[q!]
		\colon
		\catstcoalg{\dgcatA[1]}
		\to
		\catstcoalg{\dgcatA[2]}
	\).
	If \( \dgcatC[1] \in \catstcoalg{\dgcatA[1]} \)
	and~\( \dgcatC[2] \in \catstcoalg{\dgcatA[2]} \)
	are \cocatstar-algebras over different \cocatstar-categories,
	we can define a
	\textdef{morphism of \cocatstar-algebras}~\(
		\mf
		\colon
		\tup{ \dgcatC[1] , \dgcatA[1] }
		\to
		\tup{ \dgcatC[2] , \dgcatA[2] }
	\)
	to be a \cocatstar-functor~\(
		\mf
		\colon
		\dgcatA[1]
		\to
		\dgcatA[2]
	\)
	together with a map~\(
		\mf[q!]{ \dgcatC[1] }
		\to
		\dgcatC[2]
	\)
	in~\( \catstcoalg{\dgcatA[2]} \).
	By adjunction, when stating such map,
	one may as well state the map~\(
		\mf
		\colon
		\dgcatC[1]
		\to
		\mf[qpull]{\dgcatC[2]}
	\)
	in~\( \catxstmodx{\dgcatA[2]}{\dgcatA[2]} \),
	and we shall usually do so,
	even though this is only a map of \cocatstar-bimodules.
	One may similarly obtain a left
	adjoint~\(
		\map{\mf\times\mg}[spar,q!]
		\colon
		\catxstmodx{\dgcatA[1]}{\dgcatA[1]}
		\to
		\catxstmodx{\dgcatA[2]}{\dgcatA[2]}
	\)
	to the restriction functor~\(
		\map{\mf\times\mg}[spar,qpull]
		\colon
		\catxstmodx{\dgcatA[2]}{\dgcatA[2]}
		\to
		\catxstmodx{\dgcatA[1]}{\dgcatA[1]}
	\)
	when \( \mf , \mg \colon \dgcatA[1] \to \dgcatA[2] \)~are
	\cocatstar-functors.

	A (left) \textdef{\cocatstar-comodule}
	over a \cocatstar-coalgebra~\(
		\dgcatC
		\in
		\catstcoalg{\dgcatA}
	\)
	is a left comodule in the categorical sense
	over~\( \dgcatC \)
	in the category~\( \catxstmod{\dgcatA} \).
	This amounts to an object~\( \algM \in \catxstmod{\dgcatA} \)
	together with a
	coaction map~\(
		\ca
		\colon
		\algM
		\to
		\dgcatC
		\tens[\dgcatA]
		\algM
	\)
	in~\( \catxstmod{\dgcatA} \),
	satisfying the usual coassociativity
	and counity conditions.

	A counital \cocatstar-algebra is \textdef{coaugmented}
	if we are given a morphism of unital
	\( \dgcatA \)-\cocatstar-coalgebras~\(
		\coaug
		\colon
		\dgcatA
		\to
		\dgcatC
	\).
	It follows from the axioms
	of a unital \cocatstar-coalgebras
	that
	\(
		\dgcatA
		\xto{\coaug}
		\dgcatC
		\xto{\counit}
		\dgcatA
	\)~is the identity.
	Here \( \dgcatA \)~is regarded as an \( \dgcatA \)-\cocatstar-coalgebra, the comultiplication being the identity map.
	In that case, we can define the \textdef{reduction} of~\( \dgcatC \)
	as the non-counital \cocatstar-coalgebra~\( \smash{\dgcatC[red] = \Coker{\coaug}} \).
	On the level of \( \dgcatA \)-bimodules, we then have the splitting~\(
		\smash{\dgcatC = \dgcatC[red]\oplus\dgcatA}
	\).
	Conversely, if \( \dgcatC \)~is a non-counital \cocatstar-coalgebra,
	its \textdef{coaugmentation} is the counital, coaugmented
	\cocatstar-algebra~\( \dgcatC[augment] = \dgcatC\oplus\dgcatA \).
	We equip it with the comultiplication map~\(
		\comult[\dgcatC[augment]]
		\colon
		\dgcatC[augment]
		\to
		\dgcatC[augment]\tens[\dgcatA]\dgcatC[augment]
	\)
	given by~\(
		\comult[\dgcatC[augment]]
		=
		\coaug\tens\id[\dgcatC]
		+
		\comult[\dgcatC]
		+
		\comult[\dgcatA]
		+
		\id[\dgcatC]\tens\coaug
	\).
	%The maps \( \coaug\id[\dgcatC] \) and~\( \id[\dgcatC]\coaug \)
	%use the left and right action of \( \dgcatA \) on~\( \dgcatC \).
	A \textdef{morphism of coaugmented \cocatstar-coalgebras}
	is a morphism of counital \cocatstar-coalgebras that commutes with the coaugmentation.
	Thus we obtain a category~\( \catstcoalgnoc[coaug]{\dgcatA} \) of coaugmented
	\cocatstar-coalgebras over~\( \dgcatA \).
	If \(
		\dgcatC[1]
		\in
		\catstcoalgnoc[coaug]{\dgcatA[1]}
	\)
	and~\(
		\dgcatC[2]
		\in
		\catstcoalgnoc[coaug]{\dgcatA[2]}
	\),
	a morphism~\(
		\mf
		\colon
		\tup{\dgcatC[1],\dgcatA[1]}
		\to
		\tup{\dgcatC[2],\dgcatA[2]}
	\)
	of coaugmented \cocatstar-coalgebras
	over different \cocatstar-categories
	is a counital morphism making the square
	\[\begin{tikzcd}[sep=scriptsize]
		\mf[q!]{\dgcatC[1]} \ar[r] \ar[d]
			& \dgcatC[2] \ar[d]
	\\
		\mf[q!]{\dgcatA[1]} \ar[r]
			& \dgcatA[2]
	\end{tikzcd}\]
	commutative.
	Equivalently
	is a \cocatstar-functor~\(
		\mf \colon \dgcatA[1] \to \dgcatA[2]
	\)
	together with
	a morphism ~\(
		\mf
		\colon
		\mf[q!]{\dgcatC[1,red,smash]}[spar,augment]
		\to
		\dgcatC[2]
	\)
	in~\( \catstcoalgnoc[coaug]{\dgcatA[2]} \).
	A coaugmented \cocatstar-cocategory
	is \textdef{cocomplete} if its reduction
	is cocomplete in the above sense.
	Thus we obtain a category~\( \catstcoalg[coaug]{\dgcatA} \)
	of coaugmented, cocomplete \cocatstar-coalgebras
	over~\( \dgcatA \).
	
	A \textdef{\cocatstar-category}
	with set of objects~\( \setobjE \) is a \cocatstar-coalgebra
	over~\( \ringk[\setobjE] \).
	We denote by~\( \catstcocat = \catstcocat{\ringk} \)
	the category of \emph{cocomplete} \cocatstar-cocategories
	with arbitrary object sets
	and with morphisms the maps of coalgebras
	in the above sense.

\subsection{Tensor coalgebra}

	It is possible to (co)freely (co)generate a non-counital \cocatstar-coalgebra from
	any~\( \algV \in \catxstmodx{\dgcatA}{\dgcatA} \),
%	the \textdef{non-counital tensor cocategory}~is the dg-quiver~\( \tensorcoalg[red] { \dgcatC }\in\catquiv { E }\)
%	given by
%	\[
%		\tensorcoalg[red] { \dgcatC }
%		=
%		\dirsum[n\ge1] {
%			\dgcatC
%			\tens[\setobjE]
%			\dgcatC
%			\tens[\setobjE]
%			\cdots
%			\tens[\setobjE]
%			\dgcatC
%		}
%		\qquad\text{(\( n \) factors).}
%	\]
	the \textdef{non-counital tensor coalgebra}~\( \tensorcoalg[red]{\algV}\in\catstcoalg{\dgcatA} \),
	which is defined
	by the same formula as~\( \tensoralg[red]{\algV} \) above, but
	is equipped with the comultiplication
	map~\(
		\comult
		\colon
		\tensorcoalg[red]{ \algV }
		\to
		\tensorcoalg[red]{ \algV }
		\tens[\dgcatA]
		\tensorcoalg[red]{ \algV }
	\)
%	given
%	by~\(
%		\comult { c_n \sotimes c_{n-1} \sotimes \cdots \sotimes c_0}
%		=
%		\Sum[from={j=0},to={n}] {{
%			(c_n \sotimes
%		}}
%	\)
	given by
	\[\textstyle
%		\comult { \mf[n-1,n] \sotimes \cdots \sotimes \mf[0,1] }
%		=
%		\Sum[i] {{
%			( \mf[n-1,n] \sotimes \cdots \sotimes \mf[i,i+1] )
%			\sotimes
%			( \mf[i-1,i] \sotimes \cdots \sotimes \mf[0,1] )
%		}}
		\comult{ \vx[n] \tens \cdots \tens \vx[1] }
		=
		\Sum[i]{
			(\vx[n]\tens\cdots\tens\vx[i+1])
			\tens
			(\vx[i]\tens\cdots\tens\vx[1])
		}
	\]
	%for all collections of maps~\( \mf[j,j+1] \in \dgcatC { \vx[j],\vx[j+1] } \).
	for all~\( \vx[n],\ldots,\vx[1]\in\dgcatC \).
	A non-counital \cocatstar-coalgebra is called \textdef{cofree} if it is (isomorphic to a \cocatstar-coalgebra) of this form.
	Note that a cofree non-counital \cocatstar-coalgebra is cocomplete since
	a string of \( n+1 \)~morphisms will be annihilated by \( n \)~iterations of~\( \comult \).
	One may also define the 
	\textdef{coaugmented tensor coalgebra}
	%by summing over~\( n\ge 0 \).
	as \( \tensorcoalg{\algV} = \tensorcoalg[red]{\algV}[augment] \).
	A counital \cocatstar-coalgebra
	is \textdef{cofree} if it is (isomorphic to a \cocatstar-coalgebra) of this form.
%	The comultiplication map~\(
%		\comult[\tensorcoalg{\dgcatC}]
%		\colon
%		\tensorcoalg{\dgcatC}
%		\to
%		\tensorcoalg{\dgcatC}
%		\tens[\setobjE]
%		\tensorcoalg{\dgcatC}
%	\)
%	is given on~\( \tensorcoalg[red]{\dgcatC}\subset\tensorcoalg{\dgcatC} \)
%	by
%	\[
%		\comult[\tensorcoalg{\dgcatC}]{
%			{\mf[n-1,n] \tens \cdots \tens \mf[0,1]}
%		}
%		=
%		1 \tens (\mf[n-1,n] \tens \cdots \tens \mf[0,1])
%		+
%		\comult[\tensorcoalg[red]{\dgcatC}]{{\mf[n-1,n] \tens \cdots \tens \mf[0,1]}}
%		+
%		({\mf[n-1,n] \tens \cdots \tens \mf[0,1]}) \tens 1
%		.
%	\]
	
%	The \textdef{(unital) tensor quiver}~\( \tensoralg{\dgcatC} \) is given by the same recipe, but
%	also includes the summand~\( n = 0 \).\fxfatal{Introduce reduction, tensor algebra and coalgebra, plus augmentations and units, and maybe internal homs, c.f. natural transformations.}\fxfatal{WRONG, see Keller.}
	
	%We need a notion of “natural transformation” between morphisms of \cocatstar-cocategories.
	%If \( \dgcatC \) and~\( \dgcatD \) are \cocatstar-cocategories
	%and \( \mf , \mg \colon \dgcatC \to \dgcatD \) maps of \cocatstar-cocategories, 
	If~\(
		\mf , \mg
		\colon
		\tup{\dgcatC[1],\dgcatA[1]}
		\to
		\tup{\dgcatC[2],\dgcatA[2]}
	\)
	are maps of \cocatstar-coalgebras,
	an~\textdef{\( \tup { \mf , \mg } \)-coderivation} of degree~\( i \) is %defined to be
	a map~\(
%		\malpha
%		\colon
%		\dgcatC
%		\to
%		\map{\mf\times\mg}[spar,qpull] { \dgcatD }[shift=d]
		\malpha
		\colon
		\map{\mf\times\mg}[spar,q!]{\dgcatC[1]}
		\to
		\dgcatC[2,shift=i]
	\)
	in the category~\(
		%\catquiv[star] { \catob { \dgcatC } }
		%\catxstmodx{\ringk[\catob{\dgcatD}]}{\ringk[\catob{\dgcatD}]}
		%\catstcoalg{\dgcatA[2]}
		\catxstmodx{\dgcatA[2]}{\dgcatA[2]}
	\)
	such
	that~\(
		\comult[\dgcatD]\circ\malpha =
		( \mf\tens\malpha + \malpha\tens\mg ) \circ \map{\mf\times\mg}[spar,q!]{\comult[\dgcatC]}
	\).
	By adjunction, we may equivalently regard
	an \( \tup{\mf,\mg} \)-coderivation
	as a map~\(
		\malpha
		\colon
		\dgcatC[1]
		\to
		\map{\mf\times\mg}[spar,qpull]{\dgcatC[2]}[shift=i]
	\)
	in~\(
		\catxstmodx{\dgcatA[1]}{\dgcatA[1]}
	\),
	and we shall usually do so.
	We shall write~\( \coder[d=i]{ \mf , \mg } \) for the
	set of such.
	A \textdef{coderivation} is an \( \tup{ \id , \id } \)-coderivation.
	A dg-coalgebra may be considered as an algebra equipped with a coderivation~\( \dif \) satisfying~\( \dif[squared] = 0 \).
%	In the case when
%	\( \dgcatC \) and~\( \dgcatD \)
%	are \cocatstar-quivers, this notion does not depend on whether we consider them 
%	as \cocatstar-quivers or graded quivers
%	\parencite[see also][Lemma~5.4]{keller}.
	
	\begin{propositionbreak}[Proposition {\parencite[Lemma~1.1.2.2]{lh}}]
		\begin{propositionlist}
			\item\label{res:cofun_cofree_morphisms}
%			If\fxfatal{Rewrite this in terms of coalgebras over categories.} \( \algV \)~is a \cocatstar-quiver and~\( \dgcatC \)
%			a \cocatstar-cocategory,
			If \( \dgcatA \)~is a \cocatstar-category,
			\( \algV \in \catxstmodx{\dgcatA}{\dgcatA} \),
			and \( \dgcatC\in\catstcoalg[coaug]{\dgcatA} \)
			then postcomposition
			with the projection~\( \tensorcoalg{\algV}\onto \algV \)
			yields an isomorphism of sets
			\[
				\Hom[\catstcoalg[coaug]{\dgcatA}]{\dgcatC,\tensorcoalg{\algV}}
				\longisoto
				\Hom[\catxstmodx{\dgcatA}{\dgcatA}]{\dgcatC[red],\algV}.
			\]
			The inverse map takes a map of bimodules~\( \mf \colon \dgcatC[red] \to \algV \)
%			,
%			regard it as a map~\(
%				\mf\colon\mf[q!]{\dgcatC}\to\algV
%			\),
%			and sends it to the map
			to the map
			\[
				\Sum[n\ge0]{
					\mf[tens=n]
					\circ
					\comult[iterate=n]
				}
				\colon
				\dgcatC
				\to
				\tensorcoalg{\algV}
				.
			\]
			\item\label{res:cofun_cofree_precase}
%			If \( \dgcatD = \tensorcoalg[red] { \algV } \)~is
%			cofree and \( \mf , \mg \colon \dgcatC\to\tensorcoalg[red]{\algV} \) are maps of \cocatstar-cocategories,
			If \( \dgcatA[1] , \dgcatA[2] \) are \cocatstar-categories,
			\( \dgcatC \in \catstcoalg[coaug]{\dgcatA[1]} \),
			\( \algV \in \catxstmodx{\dgcatA[2]}{\dgcatA[2]} \),
			and we are given two maps of 
			coaugmented
			\cocatstar-coalgebras~\(
				\mf , \mg
				\colon
				\tup{ \dgcatC[1] , \dgcatA[1] }
				\to
				\tup{ \tensorcoalg{\algV} , \dgcatA[2] }
			\),
			then postcomposition
			with the
			projection~\(
				\tensorcoalg{\algV}
				\onto
%				\dgcatA[2]
%				\oplus
				\algV
			\)
			yields an isomorphism of sets
			\[
				\coder[d=i]{ \mf , \mg }
				\cong
				\Hom[\catxstmodx{\dgcatA[1]}{\dgcatA[1]}]{
					\dgcatC ,
					%{\map{\mf\times\mg}[spar,qpull,par] { \dgcatA[2]\oplus\algV }[shift=i]}
					{\map{\mf\times\mg}[spar,qpull]{ \algV }[shift=i]}
				}
			.\]
			The inverse map takes a morphism~\(
				\mh
				\colon
				\dgcatC
				\to
				%\map{\mf\times\mg}[spar,qpull,par] { \dgcatA[2]\oplus\algV }[shift=i]
				\map{\mf\times\mg}[qpull,spar]{ \algV }[shift=i]
			\),
			regards it as a map~\(
				\mh
%				=
%				\mh[\dgcatA[2]]
%				+
%				\mh[\algV]
				\colon
				\map{\mf\times\mg}[spar,q!]{\dgcatC}
				\to
				%\alg{\dgcatA[2]\oplus\algV}[spar,shift=i]
				\alg{\algV}[shift=i]
			\),
			and maps it
%			of \cocatstar-quivers
			to the
			%coderivation~\( \naturaltensor{h} \)
			\( \tup{\mf,\mg} \)-coderivation~\(
				\map{\mf\times\mg}[spar,q!]{\dgcatC}
				\to
				\tensorcoalg{\algV}[shift=i]
			\)
			%~\( \naturaltensor{h} \)
			whose \( n \)th~component, \( n\ge1 \), is
			\[
%				\naturaltensor{h}[n]
%				=
				\Sum[i={i+1+j=n}]{
					\mf[tens=i]\tens\mh\tens\mg[tens=j]
				}[spar=\Big]
				\circ
				\comult[der=n]
				.
			\]
%			and whose \( 0 \)th~component
%			is~\( \map{h}[\dgcatA[2]] \).
		\end{propositionlist}
	\end{propositionbreak}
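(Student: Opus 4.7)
My plan is to dualize the proof of the preceding proposition on free tensor algebras. For part~(i), I will first note that the forward direction (postcomposition with the projection $\tensorcoalg{\algV}\onto\algV$) is manifestly a well-defined map of sets, so the real content is constructing the inverse. Given a bimodule map~$\mh\colon\dgcatC[red]\to\algV$, I will extend it by zero on the coaugmentation summand of~$\dgcatC$ and define $\tilde{\mh}=\sum_{n\ge0}\mh[tens=n]\circ\comult[iterate=n]$, where by convention the $n=0$ summand is the composition $\coaug\circ\counit\colon\dgcatC\to\dgcatA\into\tensorcoalg{\algV}$ landing in the length-zero component. Cocompleteness of~$\dgcatC$ is essential here: it guarantees that for any individual morphism in~$\dgcatC$ all but finitely many summands vanish, so $\tilde{\mh}$ is pointwise well defined.

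I will then verify that $\tilde{\mh}$ is a morphism of coaugmented counital coalgebras. Coaugmentation compatibility is built into the $n=0$ summand, and counit compatibility is immediate since the counit on $\tensorcoalg{\algV}$ is the projection onto the length-zero component. The central identity $\comult_{\tensorcoalg{\algV}}\circ\tilde{\mh}=(\tilde{\mh}\tens\tilde{\mh})\circ\comult_{\dgcatC}$ will reduce to the explicit description of $\comult_{\tensorcoalg{\algV}}$, which decomposes a word $\vx[n]\tens\cdots\tens\vx[1]$ as the sum of all prefix-suffix splittings, matched against iterated coassociativity of $\comult_{\dgcatC}$. The two constructions are mutually inverse: the projection of $\tilde{\mh}$ to $\algV$ extracts precisely the $n=1$ term, while conversely any coaugmented coalgebra morphism $\dgcatC\to\tensorcoalg{\algV}$ is forced by the coalgebra axiom to coincide with the above formula when evaluated on its own projection to~$\algV$.

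For part~(ii) I will run the same strategy with coderivations replacing morphisms. Given $\mh\colon\dgcatC\to\map{\mf\times\mg}[spar,qpull]{\algV}[shift=i]$, the inverse will be given by the explicit formula in the statement; the essential check is the coderivation axiom $\comult\circ\malpha=(\mf\tens\malpha+\malpha\tens\mg)\circ\comult$. This reduces to a combinatorial identity tracking the position of the single $\mh$-factor after one splitting: on the left of the axiom, the $\mh$-factor lies in either the prefix or the suffix of the decomposition, while on the right it sits in exactly one of the two tensor factors of the splitting. The main obstacle I anticipate is keeping the left--right decorations by $\mf$ and $\mg$ consistent, since each tensor factor in $\tensorcoalg{\algV}$ must be pulled back under either $\mf$ or $\mg$ depending on its position relative to the~$\mh$-factor; in the dg-case, compatibility with differentials then follows from the graded case together with the Koszul sign conventions of~\eqref{eq:coleibniz} and~\eqref{eq:leibniz}.
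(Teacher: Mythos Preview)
Your proposal is correct and follows the standard argument; note, however, that the paper does not actually prove this proposition itself but simply cites \parencite[Lemma~1.1.2.2]{lh}, so there is nothing to compare your approach against within the paper. The argument you sketch is essentially the one found in the cited reference, dualising the free-algebra case exactly as you describe.
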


\endgroup

\begingroup

\section{\texorpdfstring{\ainfty}{A\_∞}-algebras and \texorpdfstring{\ainfty}{A\_∞}-categories}\label{para:intro_ainfty_cats}

%\begin{parabreak}[\ainfty-algebras and \ainfty-categories]\label{para:intro_ainfty_cats}
%	We continue to let~\( \ringk \) be an arbitrary commutative ring.
%	\begin{itemize}
%	\item a collection \( \dgcatA = \catob{\dgcatA} \) of objects;
%	\item for any \( \vx , \vy \in \dgcatA \), a hom complex
%	\(
%		\dgcatA[*]{ \vx , \vy }
%		= \Hom[\dgcatA,*] { \vx , \vy }
%		\in \catcom{k}
%	\);
%	\item for any \( k\ge1 \) and any \( \vx[0], \vx[1],\ldots, \vx[i] \in \dgcatA \), a degree~\( 2-i \) map of complexes
%	\[
%		\mult[i]
%		\colon
%		%\dirsum[ \vx[1] , \ldots , \vx[k-1] \in \dgcatA ] {
%			\dgcatA[*] { \vx[i-1] , \vx[i] }
%			\tens[\ringk]
%			\dgcatA[*] { \vx[i-2] , \vx[i-1] }
%			\tens[\ringk]
%			\cdots
%			\tens[\ringk]
%			\dgcatA[*] { \vx[0] , \vx[1] }
%		%}
%		\longto
%		\dgcatA[*] { \vx[0] , \vx[i] }.
%	\]
%	\end{itemize}
	Let~\( \dgcatA \) be a dg-category.
	We define a (non-unital) \textdef{\ainfty-algebra}~\( \dgcatB \)
	over~\( \dgcatA \)
	to be a cocomplete, coaugmented dg-coalgebra~\( \tup{\dgcatC,\dif} \in \catdgcoalg[coaug]{\dgcatA} \)
	whose underlying graded coaugmented coalgebra is cofree in~\( \catgrcoalg[coaug]{\dgcatA} \)
	and with \( \dif \)~vanishing on~\( \dgcatA \subset \dgcatC \).
	We write it as~\(
		\dgcatC=\tensorcoalg{{\dgcatB[shift=1]}}
	\)
	for some~\( \dgcatB \in \catxgrmodx{\dgcatA}{\dgcatA} \).
	We shall usually focus on~\( \dgcatB \) instead
	of~\( \dgcatC \)
	and therefore refer to~\( \dgcatB \) as an \ainfty-algebra
	over~\( \dgcatA \),
	while \( \dgcatC \)~is called the \textdef{bar construction}
	of~\( \dgcatB \) and is written~\(
		\dgcatC
		=
		\Bar{}{\dgcatB}
		=
		\tup{\tensorcoalg{{\dgcatB[shift=1]}},\dif}
	\).
%	It follows from the co-Leibniz
%	rule that \( \dif \)~vanishes
%	on~\( \dgcatA \subset \Bar{}{\dgcatB} \).\fxfatal{Probably false; the real statement probably is that the component \( C\to A \) of the differential is zero, c.f.~the statement about coderivations on cofree coalgebras.}
%	It follows from the co-Leibniz
%	rule that the composition~\(
%		\Bar{}{\dgcatB}
%		\xto{\dif}
%		\Bar{}{\dgcatB}
%		\xonto{\counit}
%		\dgcatA
%	\)
%	is zero.
%	
%	is an object~\( \dgcatB \in \catxdgmodx{\dgcatA}{\dgcatA} \)
%	and a coderivation~\(
%		\dif
%		\colon
%		\Bar[red]{}{\dgcatB}
%		\to
%		\Bar[red]{}{\dgcatB}
%	\)
%	%in~\( \catdgcoalg{\dgcatA} \)
%	of degree~\( 1 \) such that~\( \dif[squared] = 0 \),
%	where~\(
%		\Bar[red]{}{\dgcatB}
%		=
%		\tensorcoalg[red]{{\dgcatB[shift=1]}}
%		\in \catdgcoalg{\dgcatA}
%	\).
%	The dg-coalgebra~\(
%		\tup{\Bar[red]{}{\dgcatB},\dif}
%		\in
%		\catdgcoalg{\dgcatA}
%	 \)
%	is called the \textdef{bar construction} of~\( \dgcatB \).
%	In other words, an \ainfty-algebra is a cocomplete dg-algebra whose underlying graded coalgebra is cofree.
%	Therefore,
	\Cref{res:cofun_cofree_precase}
	shows that the coderivation~\( \dif \)
	is determined by the projection~\(
		\tensorcoalg{{\dgcatB[shift=1]}}
		\to
		\tensorcoalg{{\dgcatB[shift=1]}}[shift=1]
		\onto
		\dgcatB[shift=2]
	\),
	which is the same as maps~\(
%		\dgcatA[tens=i,shift=i]
%		=
	\smash{
		\mult[i,prime]
		\in
		%\Hom[\catgrquiv{\catob{\dgcatB}},d=1]{
		\Hom[\catxgrmodx{\dgcatA}{\dgcatA},d=1]{
			{\dgcatB[shift=1,tens=i]} ,
			{\dgcatB[shift=1]}
		}
	}\).
	Via the bijection~\eqref{eq:coleibniz}, these~\( \mult[i,prime] \) are the same
	as maps~\(\smash{
		\mult[i]
		\in
		%\Hom[\catgrquiv{\catob{\dgcatA}},d=2-i]{
		\Hom[\catxgrmodx{\dgcatA}{\dgcatA},d=2-i]{
			{\dgcatB[tens=i]},
			\dgcatB
		}
	}\).
	Then the equation~\( \dif[squared]=0 \)
	is equivalent to having for all~\( m\ge 1 \)
	the equation
%	A (non-unital) \textdef{\ainfty-category}~\( \dgcatA \) with objects~\( \setobjE \) consists of a graded quiver~\( \dgcatA \in \catgrquiv { E } \) and for all~\( n\ge 1 \), a degree~\( 2-i \) map
%	\[
%		\mult[n]
%		\colon
%		%\dirsum[ \vx[1] , \ldots , \vx[k-1] \in \dgcatA ] {
%			\dgcatA[tens=n]
%			=
%			\dgcatA
%			\tens[\setobjE]
%			\dgcatA
%			\tens[\setobjE]
%			\cdots
%			\tens[\setobjE]
%			\dgcatA
%		%}
%		\longto
%		\dgcatA%[shift=2-i]
%	\]
%	in~\( \catgrquiv { E } \),
%	satisfying for all \( m\ge1 \) the equation
	\begin{equation}\label{eq:ainfty_equations}
		\Sum {
			(-1)^{ij+k}
			\mult[l] {
				{\id[tens=i]}
				\tens
				\mult[j]
				\tens
				{\id[tens=k]}
			}
		} = 0
	\end{equation}
%	where the sum runs over the nonnegative integers~\( i,j,k,l \)
%	such that
%	\( j+k+l=m \)
%	and
%	\( i = j + 1 + l \).
	where the sum runs over the integers~\( i , k\ge 0 \)
	and~\( l , j \ge 1 \)
	such that \( l = i + 1 + k \)
	and~\( m = i + j + k \).
	Clearly, a dg-category is an \ainfty-category with~\( \mult[1] \)
	the differential, \( \mult[2] \)~the composition map,
	and all \( \mult[i] = 0 \) for~\( i>2 \).
%	The \ainfty-category~\( \dgcatA \) is called
%	\textdef{unital}
%	if for each object \( \vx \in \dgcatA \), there
%	exists~\( \id[\vx] \in \dgcatA[d=0] { \vx , \vx } \)
%	such that
%	\( \mult[2] { \mf , \id[\vx] } = \mf \)
%	and
%	\( \mult[2] { \id[\vx] , \mg } = \mg \)
%	for all~\( \mf , \mg \), and such that
%	\( \mult[n] { ... , \id[\vx] , ... } = 0 \)
%	for all~\( n\neq2 \).
	A \textdef{unit} for the \ainfty-algebra~\( \dgcatB \)
	consists of a map~\( \unitmap\colon \dgcatA \to \dgcatB \)
	in~\( \catxgrmodx{\dgcatA}{\dgcatA} \)
	such that
	\(
		\mult[2]\map{\id\otimes\unitmap}[spar]
		= \id =
		\mult[2]\map{\unitmap\otimes\id}[spar]
	\)
	and \( \mult[m]\map{{\id[tens=i]}\tens\unitmap\tens{\id[tens=j]}}[spar] = 0 \) for  and all~\( m\neq2 \) and all~\( i , j \ge 0 \) with~\( i + 1 + j = m \).
	By an “\ainfty-algebra”, we shall in general mean a unital \ainfty-algebra.
	%The \textdef{augmented bar construction}
	We are also going to consider the reduced bar construction~\(
		\Bar[red]{}{\dgcatA} = \Bar{}{\dgcatA}[red,smash]
	\).
	
	Most definitions related to \ainfty-algebras
	carry over from the definitions on dg-coalgebras.
	A \textdef{morphism of \ainfty-algebras}~\(
		\mf
		\colon
		\dgcatB[1]
		\to
		\dgcatB[2]
	\)
	over~\( \dgcatA \)
	is
	%defined to be
	a morphism~\(
	%of coaugmented dg-coalgebras~\(
		\mf
		\colon
		\Bar{}{\dgcatB[1]}
		\to
		\Bar{}{\dgcatB[2]}
	\)
	in~\( \catdgcoalg[coaug]{\dgcatA} \).
	By~\cref{res:cofun_cofree_morphisms},
	this amounts to a map~\(\smash{
		\mf
		\in
		\Hom[\catxgrmodx{\dgcatA}{\dgcatA},d=0]{
			\Bar[red]{}{\dgcatB[1]} ,
			{ \dgcatB[2,shift=1] }
		}
	}\),
	which is the same as a 
	collection of maps~\(\smash{
		\mf[i,prime]
		\in
		\Hom[\catxgrmodx{\dgcatA}{\dgcatA},d=0]{
			{ \dgcatB[1,shift=1,tens=i] } ,
			{ \dgcatB[2,shift=1] }
		}
	}\) for all~\( i\ge 1 \).
	Via the bijection~\eqref{eq:coleibniz},
	this is the same as a collection of maps~
	\[
	%\(\smash{
		\mf[i]
		\in
		\Hom[\catxgrmodx{\dgcatA}{\dgcatA},d=1-i]{
			{ \dgcatB[1,tens=i] } ,
			\dgcatB[2]
		}
	%}
	\]
	for all~\( i\ge 1 \).
	These must be subject to some technical conditions that
	we shall not write
	\parencite[see e.g.][Définition~1.2.1.2]{lh},
	which are equivalent to~\( \mf \) commuting
	with differentials.
	These form the maps in the
	category~\( \catainftyalg[nu]{\dgcatA} \)
	of non-unital \ainfty-algebras over~\( \dgcatA \).
	If \( \dgcatB[1] \) and~\( \dgcatB[2] \)
	are unital, the morphism~\( \mf \)
	is called \textdef{unital}
	if~\(
		\mf[1]\circ\unitmap
		=
		\unitmap\circ\mf[1]
		=
		\id
	\)
	and~\(
		\mf[n]{
		{\id[tens=i]}
		\tens
		\unitmap
		\tens
		{\id[tens=j]}
		}
		=
		0
	\)
	for all~\( n>1 \) and~\( i,j\ge0 \) with~\( i + 1 + j = n \).
	These form the morphisms of the category~\( \catainftyalg{\dgcatA} \)
	of (unital) \ainfty-algebras over~\( \dgcatA \).
	
	Suppose that \( \dgcatB[1] \in \catainftyalg{\dgcatA[1]} \)
	and~\( \dgcatB[2] \in \catainftyalg{\dgcatA[2]} \)
	are \ainfty-algebras over different dg-categories.
	A (unital) morphism~\(
		\mf
		\colon
		\tup{\dgcatB[1],\dgcatA[1]}
		\to
		\tup{\dgcatB[2],\dgcatA[2]}
	\)
	is a morphism of coaugmented dg-coalgebras~\(
		\mf
		\colon
		\Bar{}{\dgcatB[1]}
		\to
		\Bar{}{\dgcatB[2]}
	\)
	in the above sense,
	satisfying the unital condition.
	In other words,
	it is a dg-functor~\(
		\mf \colon \dgcatA[1] \to \dgcatA[2]
	\)
	together with a
	morphism~\(
		\mf
		\colon
		\mf[q!]{ \Bar[red]{}{\dgcatB[1]} }[spar,augment]
		\to
		\Bar{}{\dgcatB[2]}
	\)
	in~\( \catdgcoalg[coaug]{\dgcatA[2]} \),
	subject to the unital condition.
	\Cref{res:cofun_cofree_morphisms}
	shows that
	such map is the same as a map~\(\smash{
		\mf[q!]{ \Bar[red]{}{\dgcatB[1]} }
		\to
		\dgcatB[2,shift=1]
	}\)
	in~\( \catxgrmodx{\dgcatA[2]}{\dgcatA[2]} \),
	which by adjunction is the same as a map~\(
		\Bar[red]{}{\dgcatB[1]}
		\to
		\mf[qpull]{{ \dgcatB[2,shift=1] }}
	\)
	in~\( \catxgrmodx{\dgcatA[1]}{\dgcatA[1]} \).
	Another application of \cref{res:cofun_cofree_morphisms} then shows that this is the same as a
	(unital) morphism~\(
		\mf
		\colon
		\Bar{}{\dgcatB[1]}
		\to
		\Bar{}{ \mf[qpull]{{ \dgcatB[2] }} }
	\)
	of \ainfty-algebras
	in~\(
		\catainftyalg{\dgcatA[1]}
	\).
%	Thus an \ainfty-functor is exactly
%	the same as a map of
%	\ainfty-algebras~\(
%		\mf
%		\colon
%		\tup{ \dgcatB[1] , \ringk[\catob{\dgcatB[1]}] }
%		\to
%		\tup{ \dgcatB[2] , \ringk[\catob{\dgcatB[2]}] }
%	\)
%	in the above sense.
	Thus we may as well define
	a morphism~\(
		\mf
		\colon
		\tup{\dgcatB[1],\dgcatA[1]}
		\to
		\tup{\dgcatB[2],\dgcatA[2]}
	\)
	of \ainfty-algebras
	to be a dg-functor~\( \mf \colon \dgcatA[1] \to \dgcatA[2] \)
	together with a morphism~\(
		\mf
		\colon
		\dgcatB[1]
		\to
		\mf[qpull]{ \dgcatB[2] }
	\)
	in~\( \catainftyalg{\dgcatA[1]} \);
	we notice that it makes sense to regard~\( \mf[qpull]{\dgcatB[2]} \)
	as an \ainfty-algebra
	this way because \( \mf[qpull] \)~is lax monoidal.
	(Remarkably, this shows that cofree dg-cocategories, unlike general dg-cocategories, allow a pullback operation.)
	%(In other words, cofree dg-cocategories, unlike general cocategories,
	%allow a pullback operation.)

	A unital \ainfty-algebra is \textdef{augmented}
	if it is equipped with a unital
%	\ainfty-functor~\( \aug\colon\dgcatA\to\ringk[\catob{\dgcatA}] \)
%	in~\( \catxgrmodx{\.
	morphism
	of unital \ainfty-algebras~\(
		\aug
		\colon
		\dgcatB
		\to
		\dgcatA
	\)
	in~\( \catainftyalg{\dgcatA} \).
	It is a consequence of the definition of unital morphisms
	that the composition
	\(
		\dgcatA
		\xto{\unitmap}
		\dgcatB
		\xto{\aug}
		\dgcatA
	\)~is the identity.
	Similarly to the dg-case,
	we may define the \textdef{reduction} of~\( \dgcatA \)
	by~\( \dgcatA[red] = \Ker{\aug} \).
	Conversely, a non-unital \ainfty-algebra~\( \dgcatB \)
	can be made into a unital, augmented \ainfty-algebra
	by~\( \dgcatB[augment] = \dgcatB\oplus\dgcatA \).
	The \ainfty-operations on this will be
	given by~\(
		\mult[1,\dgcatB[augment]]
		=
		\mult[1,\dgcatB]
		+
		\dif[\dgcatA]
	\),
	\(
		\mult[2,\dgcatB[augment]]{b+a,b'+a'}
		=
		\mult[2,\dgcatB]{b,b'}
		+
		ab'
		+
		a'b
		+
		a a'
	\),
	and \( \mult[i,\dgcatB[augment]] = \mult[i,\dgcatB] \)
	for~\( i>2 \).	
	If \( \dgcatB \)~is augmented, one may consider
	the \textdef{augmented bar construction}~\(\smash{
		\Bar[plus] {}{\dgcatB}
		=
		\Bar{}{\dgcatB[red]}
	}\).

	An \textdef{\ainfty-category}
	with object set~\( \setobjE \)
	is a defined to be a (unital) \ainfty-algebra over the dg-category~\( \ringk[\setobjE] \).
	Denote by~\( \catainftycat = \catainftycat{\ringk} \)
	the category of (unital) \ainfty-categories
	with arbitrary object sets and
	morphisms
	the unital
	maps of \ainfty-algebras in the above sense,
	known as
	\textdef{\ainfty-functors}.
	The collection of non-unital \ainfty-categories
	also form a category~\(
		\catainftycat[nu] = \catainftycat[nu]{\ringk}
	\).

\section{\texorpdfstring{\ainfty}{A\_∞}-coalgebras and \texorpdfstring{\ainfty}{A\_∞}-cocategories}

	Let again~\( \dgcatA \) be a dg-category.
	An \textdef{\ainfty-coalgebra} over~\( \dgcatA \)
	is an augmented dg-algebra~\( \tup{\dgcatB,\dif} \in \catdgalg[aug]{\dgcatA} \)
	whose underlying graded augmented algebra is free in~\( \catgralg[aug]{\dgcatA} \)
	and for which \( \dgcatB \xto{\dif} \dgcatB \onto \dgcatA \)
	is zero.
	We therefore write it as~\(
		\dgcatB = \tensoralg{{ \dgcatC[shift=-1] }}
	\)
	for some~\( \dgcatC\in\catxgrmodx{\dgcatA}{\dgcatA} \).
	We shall usually focus on~\( \dgcatC \)
	instead of~\( \dgcatB \) and therefore
	refer to~\( \dgcatC \) as an
	\ainfty-coalgebra over~\( \dgcatA \),
	while \( \dgcatB \)~is called the \textdef{cobar construction}
	of~\( \dgcatC \) and is written~\(
		\dgcatB
		= \Cob{}{\dgcatC}
		= \tup{ \tensoralg{{\dgcatC[shift=-1]}} , \dif }
	\).
%	It follows from the Leibniz rule that
%	\( \dif \)~vanishes on~\(
%		\dgcatA \subset \Cob{}{\dgcatC}
%	\).
	Therefore,
	\cref{res:derivations_free}
	shows that the derivation~\( \dif \)
	on the free algebra~\( \tensoralg{{ \dgcatC[shift=-1] }} \)
	is determined by the inclusion
	\[
		\dgcatC[shift=-1]
		\into
		\tensorcoalg{{ \dgcatC[shift=-1] }}
		\to
		\tensorcoalg{{ \dgcatC[shift=-1] }}[shift=1]
	,
	\]
	which is the same as maps~\(\smash{
		\comult[i,prime]
		\in
		\Hom[\catxgrmodx{\dgcatA}{\dgcatA},d=1]{
			{ \dgcatC[shift=-1] } ,
			{ \dgcatC[shift=-1,tens=i] }
		}
	}\).
	Via the bijection~\eqref{eq:leibniz},
	these~\( \comult[i,prime] \) are the same as maps~\(\smash{
		\comult[i]
		\in
		\Hom[\catxgrmodx{\dgcatA}{\dgcatA},d=2-i]{
			\dgcatC ,
			{ \dgcatC[tens=i] }
		}
	}\).
	Then the equation~\( \dif[squared] = 0 \) is equivalent to having for
	all~\( m\ge 1 \) the equation
%	
%	Dually, an \textdef{\ainfty-cocategory} with objects~\( \setobjE \) consists
%	a dg-quiver~\( \dgcatC \in \catquiv[dg] { E } \) and for each~\( k\ge 1 \),
%	a degree~\( 2-i \) map
%	\[
%		\comult[i]
%		\colon
%		\dgcatC
%		\longto
%		\dgcatC[tens=i]
%		=
%		\dgcatC
%		\tens[\setobjE]
%		\dgcatC
%		\tens[\setobjE]
%		\cdots
%		\tens[\setobjE]
%		\dgcatC
%	\]
%	in~\( \catquiv[dg] { E } \),
%	such that the product
%	map~\( \Prod[i\ge 1] { \comult[i] }\colon \dgcatC\to \Prod[i\ge 1] {{\dgcatC[tens=i]}} \) factors
%	through the direct sum~\( \dirsum[i\ge 1] {{\dgcatC[tens=i]}} \), and such that we
%	have for all~\( m\ge 1 \) the equation
	\begin{equation}\label{eq:ainfty_cocategory_equation}
		\Sum {
			(-1)^{i+jk}
			(
				\id[tens=i]\tens\comult[j]\tens\id[tens=k]
			)
			\comult[l]
		}
		=0
	\end{equation}
	for all~\( i,j,k,l \) such that \( i + j + k = m \)
	and~\( l = i + 1 + k \).
	Furthermore, for the summation in the cobar construction to be meaningful, we require that the product
	map~\( \Prod[i\ge 1] { \comult[i] }\colon \dgcatC\to \Prod[i\ge 1] {{\dgcatC[tens=i]}} \) must factor
	through the direct sum~\( \dirsum[i\ge 1] {{\dgcatC[tens=i]}} \).
	The \ainfty-algebra~\( \dgcatC \) is \textdef{counital} if
	it is equipped with a
	map~\( \counit \colon \dgcatC\to\dgcatA \)
	in the category~\( \catxgrmodx{\dgcatA}{\dgcatA} \)
	satisfying~\(
		\map { \id \tens \counit }[spar]
		\comult[2]
		=
		\id
		=
		\map { \counit \tens \id }[spar]
		\comult[2]
	\)
	and~\( \map { \id[tens=i]\tens\counit\tens\id[tens=j] }[spar] \comult[m]=0\)
	for all~\( m\neq 2 \) and all~\( i, j\ge 0 \) with~\( i + 1 + j = m \).
%	As above, we may simplify these equations by introducing
%	the \textdef{cobar construction}
%	the non-unital graded algebra given
%	by~\(
%		\Cob[red]{}{\dgcatC} = \tensoralg[red] {{ \dgcatC[shift=-1] }}
%	\).
%	Then the maps~\( \comult[i] \) extend uniquely
%	to a derivation~\(
%		\dif
%		\colon
%		\Cob[red] {} { \dgcatC }
%		\to
%		\Cob[red] {} { \dgcatC }[shift=1]
%	\).
%	More explicitly, there is a bijection
%	\begin{equation}\label{eq:coalg_shift_bijection}
%	\begin{split}
%		\Hom[\catgrquiv{E},d=l+1-n]{
%			\dgcatC,
%			{\dgcatC[tens=n]}
%		}
%			& \longisoto
%			\Hom[\catgrquiv{E},d=l]{
%				{\dgcatC[shift=-1]},
%				{\dgcatC[shift=-1,tens=n]}
%			}
%	\\
%		%\comult[n]
%		\mf
%			& \longmapsto
%			%\comult[n,prime]
%			\mf[prime]
%			=
%			(-1)^{l}
%			\momega[tens=n]
%			\circ
%			%\comult[n]
%			\mf
%			\circ
%			\ms
%		.
%	\end{split}
%	\end{equation}
%	Then the sum~\(\smash{
%		\dif
%		=
%		\dirsum[n\ge1]{\comult[n,prime]}
%		\colon
%		\dgcatC[shift=-1]
%		\to
%		%\tensoralg[red]{{\dgcatC[shift=-1]}}
%		\Cob[red]{}{\dgcatC}
%	}\)
%	extends uniquely via a result dual to~\cref{res:cofun_cofree_precase}
%	to a derivation~\(\smash{
%		\dif
%		\colon
%		\Cob[red]{}{\dgcatC}
%		\to
%		\Cob[red]{}{\dgcatC}[shift=1]
%	}\).
%	Then the equations above become equivalent to~\( \dif[squared] = 0 \).
%	In other words, an \ainfty-cocategory is the same thing
%	as a dg-category whose underlying graded category is free.
	By an “\ainfty-coalgebra”,
	we shall usually mean a counital one.
	We are also going to consider the non-augmented
	algebra~\( \Cob[red]{}{\dgcatC} = \Cob{}{\dgcatC}[red] \).
	
	Most definitions related to \ainfty-coalgebras
	carry over from the definitions on dg-algebras.
	A \textdef{morphism of \ainfty-coalgebras}~\(
		\mf \colon \dgcatC[1] \to \dgcatC[2]
	\)
	over~\( \dgcatA \)
	is a morphism~\(
		\mf
		\colon
		\Cob{}{\dgcatC[1]}
		\to
		\Cob{}{\dgcatC[2]}
	\)
	in~\( \catdgalg[aug]{\dgcatA} \).
	By~\cref{res:free_morphisms},
	this amounts to a map~\(\smash{
		\mf
		\in
		\Hom[d=0,\catxgrmodx{\dgcatA}{\dgcatA}]{
			{ \dgcatC[1,shift=-1] } ,
			\Cob[red]{}{\dgcatC[2]}
		}
	}\),
	which is the same as a collection
	of maps~\(\smash{
		\mf[i,prime]
		\in
		\Hom[d=0,\catxgrmodx{\dgcatA}{\dgcatA}]{
			{ \dgcatC[1,shift=-1] } ,
			{ \dgcatC[2,shift=-1,tens=i] }
		}
	}\).
	Via the bijection~\eqref{eq:leibniz},
	this is the same as a collection
	of maps
	\[
		\mf[i]
		\in
		\Hom[d=1-i,\catxgrmodx{\dgcatA}{\dgcatA}]{
			\dgcatC[1] ,
			{ \dgcatC[2,tens=i] }
		}
		.
	\]
	These must be subject to some technical conditions
	that we shall not write,
	which are equivalent to~\( \mf \)
	commuting with the differential.
	These form the morphisms in the
	category~\( \catainftycoalg[nu]{\dgcatA} \)
	of non-counital \ainfty-coalgebras
	over the dg-category~\( \dgcatA \).
	If \( \dgcatB[1] \) and~\( \dgcatB[2] \)
	are counital,
	a morphism~\( \mf \)
	is called \textdef{counital}
	if~\(
		\mf[1]
		\circ
		\counit
		=
		\counit
		\circ
		\mf[1]
		=
		\id
	\)
	and~\(
		\mf[n]{{
			\id[tens=i]
			\tens
			\counit
			\tens
			\id[tens=j]
		}}
		=
		0
	\)
	for all~\( n>1 \)
	and~\( i,j\ge0 \)
	with~\( i + 1 + j = n \).
	These form the morphisms in the
	category~\( \catainftycoalg{\dgcatA} \)
	of (counital) \ainfty-coalgebras over~\( \dgcatA \).

	Suppose that \( \dgcatC[1] \in \catainftycoalg{\dgcatA[1]} \)
	and~\( \dgcatC[2] \in \catainftycoalg{\dgcatA[2]} \)
	are \ainfty-coalgebras over different dg-categories.
	A (unital) morphism~\(
		\mf
		\colon
		\tup{\dgcatC[1],\dgcatA[1]}
		\to
		\tup{\dgcatC[2],\dgcatA[2]}
	\)
	is a morphism of augmented dg-algebras~\(
		\mf
		\colon
		\Cob{}{\dgcatC[1]}
		\to
		\Cob{}{\dgcatC[2]}
	\)
	in the above sense,
	satisfying the counital condition.
	In other words, it consists
	of a dg-functor~\( \mf \colon \dgcatA[1]\to\dgcatA[2] \)
	together with a morphism~\(
		\mf
		\colon
		\Cob{}{\dgcatC[1]}
		\to
		\mf[qpull]{ \Cob[red]{}{\dgcatC[2]} }[spar,augment]
	\)
	in~\( \catdgalg[aug]{\dgcatA[1]} \),
	subject to the counital condition.
	By~\cref{res:free_morphisms},
	such map is the same as a map~\(
		\dgcatC[1,shift=-1]
		\to
		\mf[qpull]{ \Cob[red]{}{\dgcatC[2]} }[smash]
	\)
	in~\( \catxgrmodx{\dgcatA[1]}{\dgcatA[1]} \),
	which by adjunction is the same as a map~\(
		\mf[q!]{{ \dgcatC[1,shift=-1] }}
		\to
		\Cob[red]{}{\dgcatC[2]}
	\).
	Another application of \cref{res:free_morphisms} shows that
	%we obtain that
	this is the same as
	a morphism~\(
		\mf
		\colon
		\Cob{}{ \mf[q!]{ \dgcatC[1] } }
		\to
		\Cob{}{ \dgcatC[2] }
	\)
	of \ainfty-coalgebras
	in~\( \catainftycoalg{ \dgcatA[2] } \).
	In other words, we may as well define
	a morphism~\(
		\mf
		\colon
		\tup{\dgcatC[1] , \dgcatA[1]}
		\to
		\tup{\dgcatC[2] , \dgcatA[2]}
	\)
	to be a dg-functor~\(
		\mf
		\colon
		\dgcatA[1]
		\to
		\dgcatA[2]
	\)
	together with a morphism~\(
		\mf
		\colon
		\mf[q!]{ \dgcatC[1] }
		\to
		\dgcatC[2]
	\)
	in~\( \catainftycoalg{\dgcatA[2]} \);
	we notice that it makes sense to
	regard~\( \mf[q!]{ \dgcatC[1] } \)
	as an \ainfty-coalgebra
	because \( \mf[q!] \)~is oplax monoidal.
	(Remarkably, this shows that dg-algebras which are free as graded algebras,
	unlike general dg-algebras,
	allow a \( ! \)-push\-forward operation.)

	A counital \ainfty-coalgebra~\( \dgcatC \)
	is \textdef{coaugmented}
	if it is equipped with a counital
	morphism of counital \ainfty-coalgebras~\(
		\coaug
		\colon
		\dgcatA
		\to
		\dgcatC
	\)
	in~\( \catainftycoalg{\dgcatA} \).
	It is a consequence of the definition of
	counital morphisms that the composition~\(
		\dgcatA
		\xto{\coaug}
		\dgcatC
		\xto{\counit}
		\dgcatA
	\)
	is the identity.
	Similarly to the dg-case,
	we may define the \textdef{reduction}
	of~\( \dgcatC \)
	by~\(
		\dgcatC[red,smash] = \Coker{\coaug}
	\).
	Conversely, a non-counital \ainfty-coalgebra~\( \dgcatC \)
	can be made into a counital,
	coaugmented \ainfty-coalgebra
	by~\(
		\dgcatC[augment]
		=
		\dgcatC
		\oplus
		\dgcatA
	\).
	The \ainfty-coalgebra structure on this is dual
	to the one we defined on augmented \ainfty-algebras.
	If \( \dgcatC \)~is in augmented,
	we may define the \textdef{augmented cobar construction}
	by~\(
		\Cob[plus]{}{ \dgcatC }
		=
		\Cob{}{ \dgcatC[red,smash] }
	\).
	
	An \textdef{\ainfty-cocategory}
	with set of objects~\( \setobjE \)
	is an \ainfty-coalgebra
	over the dg-category~\( \ringk[\setobjE] \).
	Denote by~\( \catainftycocat = \catainftycocat{\ringk} \)
	the category of \ainfty-cocategories
	with arbitrary object sets and
	morphisms the counital maps of \ainfty-coalgebras
	in the above sense.
	The collection of non-counital \ainfty-cocategories
	also form a category~\(
		\catainftycocat[nu] = \catainftycocat[nu]{\ringk}
	\).

	Restricting \( \Bar[plus] {} \) and~\( \Cob[plus] {} \)
	to honest, (co)augmented dg-(co)categories, we obtain an adjunction
	\[
%		\Hom[\catcocat[dg]] { \dgcatC , \Bar[red] { \dgcatA } }
%		\cong
%		\Hom[\catdgcat] { \Cobar[red] { \dgcatC , 
		\begin{tikzcd}[sep=scriptsize]
			\Cob[plus]{}
			\colon
			\catdgcocat[coaug]
			\ar[r,yshift=1.5pt]
			&
			\ar[l,yshift=-1.5pt]
			\catdgcat[aug]
			\noloc
			\Bar[plus]{}
			\invcomma
		\end{tikzcd}
	\]
	see \textcite[Lemma~1.2.2.5]{lh}.
	
	\begin{example}
		Denote by~\( \ordset{n} \) the poset consisting of \( n + 1 \)~objects~\( 0 \), \( 1 \), \ldots, \( n \) and a unique morphisms~\( \tup{i,j}\colon i\to j \) for all~\( i\le j \).
		Let~\( \ordset{n}[linearize=\ringk] \) be its \( \ringk \)-linearization,
		and regard it as an augmented dg-category with zero differential.
		The augmented bar construction
		\[
			\Bar[plus]{}{{\ordset{n}[linearize=\ringk]}}
			=
			\Bar[red]{}{{\ordset{n}[linearize=\ringk]}}
			\oplus
			\ringk[\ordset{n}]
		\]
		has set of objects~\( \ordset{n} \)
		and morphisms
		given by tensor products
		\(
			\tup{i_0,i_1,...,i_k}
			=
			\tup{i_{k-1},i_k}
			\tens
			\tup{i_{k-2},i_{k-1}}
			\tens
			\cdots
			\tens
			\tup{i_0,i_1}
		\).
		The differential is given by
		\[
			\dif{\tup{i_0,i_1,...,i_k}}
			=
			\Sum[from={j=1},to={k-1}]{
				(-1)^{k-j+1}
				\tup{i_0,...,\hat{\imath}_j,...,i_k}
			}
		\]
		
		Applying the cobar construction,
		we obtain the free dg-category
		\[
			%\simp[dg]{n}
			%:=
			\Cob[plus]{}{ \Bar[plus]{}{{
					\ordset{n}[linearize=\ringk]
				}}
			}
			=
			\Cob[red]{}{
				\Bar[red]{}{{
					\ordset{n}[linearize=\ringk,red]
				}}
			}
			\oplus
			\ringk[\ordset{n}]
		.\]
%		The objects are the same as those of~\( \ordset{n}[linearize=\ringk] \).
%		For every sequence
		This again has object
		set~\( \ordset{n} %= \set{0,1,...,n}
		\)
		and with morphisms freely generated
		by morphisms~\(
			\mf[i_0 i_1 \cdots i_k]
			\colon
			i_0
			\to
			i_k
		\)
		of degree~\( 1-k \)
		for all sequences~\( 0 \le i_0 < i_1 < \cdots < i_k \le n \),~\( k>0 \).
		The differential is given by
		\[
			\dif{ \mf[i_0 i_1 \cdots i_k] }
			=
			\Sum[from={j=1},to={k-1}]{
			(-1)^{k-j}
			\bigl(
				\mf[i_0\cdots \hat{\imath}_j \cdots i_k]
				-
				\mf[i_j \cdots i_k]
				\circ
				\mf[i_0\cdots i_j]
			\bigr)
			}
			.
		\]
		In this formula, we use the convention that
		\( \mf[j_0 j_1 \cdots j_l] = 0 \)
		if the index contains repetitions
		and~\( l>1 \),
		and that~\( \mf[j_0 j_0] = \id \).
%		To obtain this formula, let~\( \tup{ i , j } \) denote the morphism~\( i\to j \)
%		in~\( \ordset{n}[linearize=\ringk] \).
%		Then
%		\( \mf[i_0 i_1 \cdots i_k] \)~is the generator
%		\[
%			\mf[i_0 i_1 \cdots i_k]
%			=
%			(-1)^{k}
%			\tup{i_{k-1} , i_k}
%			\sotimes
%			\tup{i_{k-2} , i_{k-1}}
%			\sotimes
%			\cdots
%			\sotimes
%			\tup{i_0 , i_1}
%			.
%		\]
%
%		To obtain this formula,
%		let
%		\[
%			\barorderedtensor{i_0,i_1,...,i_k}
%			:=
%			(i_{k-1}\to i_{k})
%			\sotimes
%			(i_{k-2}\to i_{k-1})
%			\sotimes
%			\cdots
%			\sotimes
%			(i_0\to i_1)
%			\in
%			\Bar[red]{}{{\ordset{n}[linearize=\ringk,red]}}
%		\]
%		be the tensor product of the maps \( i_{j-1}\to i_j \)
%		in~\( \ordset{n}[linearize=\ringk] \),
%		and
%%		\[
%%			\mf[i_0 i_1 \cdots i_k]
%%			:=
%%			(-1)^{k}
%%			\barorderedtensor{i_0,i_1,...,i_k}
%%			\in
%%			\Cob[red]{}{ \Bar[red]{}{{\ordset{n}[linearize=\ringk,red]}} }
%%			.
%%		\]
%		let~\( \mf[i_0 i_1 \cdots i_k] \)
%		be the generator
%		in~\(
%			\Cob[red]{}{ \Bar[red]{}{{\ordset{n}[linearize=\ringk,red]}} }
%		\)
%		corresponding
%		to~\( \barorderedtensor{i_0,i_1,...,i_k} \).
	\end{example}

\subsection{\texorpdfstring{\ainfty}{A\_∞}-comodules}

	Let~\( \dgcatC \) be an \ainfty-coalgebra
	over a dg-category~\( \dgcatA \).
	A \textdef{left \ainfty-comodule} over~\( \dgcatC \)
	is a dg-module
	in~\( \catxdgmod{\dgcatA} \)
	over the dg-algebra~\(
		\Cob{}{\dgcatC}
	\) whose underlying graded module is \emph{free}.
	By free, we mean
	that it has the form~\(
		\Cob{}{\dgcatC}
		\tens[\dgcatA]
		\algM[shift=-1]
	\)
	for some~\( \algM \in \catxgrmod{\dgcatA} \).
	As with coalgebras, we shall usually
	focus on~\( \algM \) and refer to it as an
	\ainfty-comodule,
	while
	the dg-module~\(
		\Cob{}{\dgcatC,\algM}
		:=
		\Cob{}{\dgcatC}
		\tens[\dgcatA]
		\algM[shift=-1]
	\)
	is called the \textdef{cobar construction}
	of~\( \algM \).
	The action map
	\[
		\Cob{}{\dgcatC}
		\tens[\dgcatA]
		\Cob{}{\dgcatC,\algM}
		\to
		\Cob{}{\dgcatC,\algM}
	\]
	amounts, by freeness and~\eqref{eq:leibniz},
	to maps~\(
		\ca[i]
		\colon
		\algM
		\to
		\algC[tens=(1-i)]
		\tens[\dgcatA]
		\algM[shift=2-i]
	\)
	for all~\( i\ge 1\),
	subject to the equation~\eqref{eq:ainfty_cocategory_equation}
	where \( \comult[l] \)~is understood as~\( \ca[l] \),
	while \( \comult[j] \)~must
	be interpreted as~\( \ca[j] \) in the case~\( k=0 \).
	Furthermore, in order for these maps to be definable
	on the cobar construction,
	we must require
	that the product~\(
		\Prod[i\ge1]{\ca[i]}
		\colon
		\algM
		\to
		\Prod[i\ge1]{
			\dgcatC[tens=(i-1)]
			\tens[\dgcatA]
			\algM
		}
	\)
	factors through~\(
		\dirsum[i\ge1]{
			\dgcatC[tens=(i-1)]
			\tens[\dgcatA]
			\algM
		}
	\).
	We call~\( \algM \) a \textdef{formal \ainfty-comodule}
	if we omit this last condition.
	
%	consists of an object~\( \algM \in \catquiv[gr] { E } \)
%	together with a collection of coaction
%	maps~\(
%		\ca[i]
%		\colon
%		\algM
%		\to
%		\dgcatC[tens={(i-1)}]
%		\tens[\setobjE]
%		\algM
%	\)
%	in~\( \catquiv[gr] { E } \)
%	satisfying the equation~\eqref{eq:ainfty_cocategory_equation}
%	where \( \comult[l] \)~is interpreted as~\( \ca[l] \),
%	while \( \comult[j] \)~is interpreted as~\( \ca[j] \)
%	in the case~\( k = 0 \).
%	This is equivalent to the structure of a left
%	\( \Cob[red]{}{\dgcatC}[smash] \)-dg-module\fxfatal{Define dg-modules}
%	on~\(
%		\Cob[red]{}{\dgcatC,\algM}[smash]
%		=
%		\dirsum[n\ge1]{
%			\alg{
%				\dgcatC[tens=(n-1)]
%				\tens
%				\algM
%			}[spar,shift=1-n]
%		}[smash]
%	\).

	The collection of \ainfty-comodules over~\( \dgcatC \)
	form a dg-category,
	namely the full dg-subcategory
	of~\(
		\catxdgmod{\Cob{}{\dgcatC}}
	\)
	consisting of dg-modules that are free as graded modules.
	A map \( \mf \colon \algM \to \algN \)
	of degree~\( \dgcatdeg{\mf} \) is the same as a
	%degree~\( \dgcatdeg{\mf} \)
	map of \( \Cob{}{\dgcatC}[smash] \)-dg-modules
	\(
		\mf
		\colon
		\Cob{}{\dgcatC,\algM}[smash]
		\to
		\Cob{}{\dgcatC,\algN}[smash,shift=\dgcatdeg{\mf}]
	\).
	This is equivalent via~\eqref{eq:leibniz} to a
%	If \( \algM \) and~\( \algN \) are \ainfty-comodules,
%	a map \( \mf \colon \algM \to \algN \)~of \ainfty-comodules
%	is a
	collection of maps
	\[
		\mf[i]
		\colon
		\algM
		\longto
		\dgcatC[tens={(i-1)}]
		\tens[\dgcatA]
		\algN[shift=\dgcatdeg{\mf}+1-i]
	\]
	for~\( i\ge 1 \),
	such that the product~\(
		\Prod[i\ge1] { \mf[i] }
		\colon
		\algM
		\to
		\Prod[i\ge1]{
			\dgcatC[tens={(i-1)}]
			\tens[\dgcatA]
			\algN[shift=\dgcatdeg{\mf}+1-i]
		}
	\)
	factors through~\(
		\dirsum[i\ge1]{
			\dgcatC[tens={(i-1)}]
			\tens[\dgcatA]
			\algN[shift=\dgcatdeg{\mf}+1-i]
		}
	\).
%	satisfying for all~\( m\ge 1 \) the equation
%	\[
%		\Sum {
%			(-1)^{i+jk}
%			\map {
%				\id[tens=i]
%				\tens
%				\comult[j]
%				\tens
%				\id[tens=k]
%			}[spar]
%			\mf[l]
%		}
%		=
%		\Sum {
%			\map{
%				\id[tens=s]\tens\mf[r]
%			}[spar]
%			\ca[s+1]
%		}
%	\]
%	where \( i + j + k = m \),
%	\( i + 1 + k = l \),
%	and~\( m = r + s \).
%	This is equivalent to the statement that
	The differential
	\[
		\dif[par]{\mf}
		= \dif[\Cob{}{\dgcatC,\algN}] \circ \mf
		-
		(-1)^{\dgcatdeg{\mf}}
		\mf \circ \dif[\Cob{}{\dgcatC,\algM}]
	\]
	is given by
	\[
		\dif[par]{\mf}[n]
		=
		\Sum{
			(-1)^{i+jk}
			\map{
				\id[tens=i]
				\tens
				\comult[j]
				\tens
				\id[tens=k]
			}[spar]
			\mf[m]
		}
		-
		\Sum{
			(-1)^{p\dgcatdeg{\mf}}
			\map{
				\id[tens=(p-1)]
				\tens
				\mf[q]
			}[spar]
			\ca[p]
		}
	\]
	where the first sum runs over \( i,k\ge 0\) and \( j,m>1 \)
	with \( i + 1 + k = m \) and~\( i + j + k = n \)
	and the second over the~\( p , q \ge 1 \) with~\( p + q - 1 = n \).
	If \( i = 0 \), \( \comult[j] \)~should be understood as~\( \ca[j] \).
	If \( \mf \colon \algM \to \algN \) and \( \mg \colon \algN \to \algP \)
	are maps of \ainfty-comodules, their composition~\( \mg \circ \mf \)
	is the composition
	\(\smash{
		\Cob{}{\dgcatC,\algM}
		\xto{\mf}
		\Cob{}{\dgcatC,\algN}
		\xto{\mg}
		\Cob{}{\dgcatC,\algP}
	}\).
	One checks via~\eqref{eq:leibniz} that the~\( n \)th component is
	\begin{align*}
		\map{\mg\circ\mf}[spar,n]
%		&=
%		\momega[tens=n,spar,inv]
%		\circ
%		\Sum[lower={n=l+k-1}]{
%			\map{
%				\id[tens=l-1]
%				\tens
%				\momega[tens=k]
%				\mg[k]
%				\ms
%			}[spar]
%			\circ
%			\map{
%				\momega[tens=l]
%				\mf[l]
%			}[spar]
%		}
%	\\
		&=
		%\Sum[from={l=1},to={n}]{
		\Sum[lower={n=l+k-1}]{
			(-1)^{\dgcatdeg{\mg}(l-1)}
			\map{\id[tens=(l-1)]\tens\mg[k]}[spar]
			\circ
			\mf[l]
		}
		.
	\end{align*}
	If \( \dgcatC \)~is counital,
	the \ainfty-comodule~\( \algM \) is called \textdef{counital}
	if \( \ca[2](\counit\tens\id[\algM]) = \id[\algM] \)
	and~\(
		\ca[n]
		(
			\id[tens=i]
			\tens
			\counit
			\tens
			\id[tens=j]
			\tens
			\id[\algM]
		) = 0
	\)
	for all~\( n>2 \)
	and all~\( i,j \ge0 \) with~\( i + j + 2 = n \).
	It is called \textdef{homotopy-counital}
	%if the first condition holds \emph{up to homotopy},
	%i.e.
	if we have a homotopy~\( \ca[2](\counit\tens\id[\algM])\simeq\id[\algM]\)
	with respect to the differential~\( \ca[1] \)
	on~\( \algM \).
	We denote the dg-category of such
	by~\( \catxcomod{\dgcatC}[infty,hcu]{\dgcatA} \).
%\end{para}

\section{The \texorpdfstring{\ainfty}{A\_∞}-category of \texorpdfstring{\ainfty}{A\_∞}-functors}

There exists an internal hom in the category of \ainfty-categories,
known as the \ainfty-category of \ainfty-functors.
It is in fact just a special case of an internal hom in the category of dg-cocategories.
In this section, we stick mostly to the approach
of \textcite{keller}.
We define the tensor product~\(
	\dgcatC\tens[\ringk]\dgcatD
\) of two \cocatstar-categories
over~\( \ringk \) as the tensor product of the underlying \cocatstar-quivers,
equipped with the natural diagonal \cocatstar-category structure.

%	\begin{propositionbreak}[Proposition {\parencite[Lemma~1.1.2.2]{lh}}]\label{res:cofun_cofree_precase} If \( \dgcatD = \tensorcoalg[red] { \algV } \)~is
%			cofree and \( \mf , \mg \colon \dgcatC\to\tensorcoalg[red]{\algV} \) are maps of \cocatstar-cocategories, then composition
%			with the
%			projection~\( \tensorcoalg[red]{\algV}\to\algV \)
%			yields an isomorphism of sets
%			\[
%				\coder { \mf , \mg }
%				\cong
%				\Hom[\catquiv[gr]{\catob{\dgcatC}}] {
%					\dgcatC ,
%					\map{\mf\times\mg}[spar,qpull] { \algV }
%				}
%			.\]
%			The inverse map takes a morphism~\(
%				\mh
%				\colon
%				\dgcatC
%				\to
%				\map{\mf\times\mg}[spar,qpull] { \algV }
%			\)
%			to the
%			%coderivation~\( \naturaltensor{h} \)
%			\( \tup{\mf,\mg} \)-coderivation~\( \naturaltensor{h} \)
%			whose \( n \)th~component is
%			\[
%				\naturaltensor{h}[n]
%				=
%				\Sum[i={i+1+j=n}] {
%					\mf[tens=i]\tens\mh\tens\mg[tens=j]
%				}[spar=\Big]
%				\circ
%				\comult[der=n]
%			.\]
%	\end{propositionbreak}
	
	\begin{propositionbreak}[{Proposition
	\parencite[Theorem~5.3, Lemma~5.2, Lemma~5.4]{keller}}]\label{res:cofun}
		%Write~\cocatstar\ for any of the symbols \categoryformat{dg} or~\categoryformat{gr}.\fxfatal{Add assumptions of cocompleteness!}
		\begin{propositionlist}
			\item The category~\( \catstcocat[coaug] \) of cocomplete,
			coaugmented \cocatstar-cocategories
			is a closed monoidal category
			with an internal hom which we denote by~\( \catstcofun \).
			The \cocatstar-cocategory~\( \catcofun[star]{ \dgcatC , \dgcatD } \)
			has objects the morphisms of
			coaugmented \cocatstar-cocategories~\( \dgcatC\to\dgcatD \).
			It is a \cocatstar-subquiver
			of~\(\cathom[\catstquiv[aug]]{ \dgcatC , \dgcatD }\),
			the internal hom in augmented \cocatstar-quivers.
%			If \( \mf , \mg \colon \dgcatC \to \dgcatD \)~are
%			two morphisms of \cocatstar-cocategories,
%			then
%			\(
%				\catcofun{\dgcatC,\dgcatD}{\mf,\mg}
%				\subset
%				\cathom[\catquiv[star]]{\dgcatC,\dgcatD}{\mf,\mg}
%			\)
%			consists ot the natural
%			transformations~\(
%				\dgcatC[augment]
%				\to
%				\map{\mf\times\mg}[spar,qpull]{\dgcatD}
%			\) which are \( \tup{\mf,\mg} \)-coderivations (here \( \mf \) and~\( \mg \) are extended by zero to morphisms~\( \dgcatC[augment]\to\dgcatD \)).
%			In the dg-case, we endow
%			the internal hom with the inherited differential.
			\item\label{res:dgcofun_subset_grcofun}
			If \( \dgcatC \) and~\( \dgcatD \)~are dg-cocategories,
			 \( \catdgcofun{\dgcatC,\dgcatD} \)~is
			the full subcocategory of~\( \catgrcofun{\dgcatC,\dgcatD} \)
			consisting of morphisms of graded coaugmented cocategories commuting with the differential.
			%from~\( \cathom[\catquiv[star]]{\dgcatC,\dgcatD} \).
			\item\label{res:cofun_cofree} If \( \dgcatD = \tensorcoalg{ \algV } \)~is cofree,
			then
			\[
				\catcofun[star]{ \dgcatC , \tensorcoalg{ \algV } }
				\cong
				\tensorcoalg[par=\big]{{ \cathom[\catstquiv[aug]]{ \dgcatC , \algV }[red] }}
			\]
			is also cofree.
%			The isomorphism on objects
%			\begin{align*}
%				\MoveEqLeft
%				\catob[par]{ \catcofun[star] { \dgcatC , \tensorcoalg[red] { \algV } } }
%					\cong
%					\catob[par]{ \tensorcoalg[red] { \cathom[\catquiv[star]] { \dgcatC , \algV } } }
%			\\
%					&=
%					\catob[par] { \cathom[\catquiv[star]] { \dgcatC , \algV } }
%			\end{align*}
%			is given as follows:
%			The map in the rightwards direction
%			composes~\( \dgcatC\to\tensorcoalg[red]{\algV} \)
%			with the projection~\( \tensorcoalg[red]{\algV}\to\algV \).
%			The map in the leftwards direction takes the quiver
%			map~\( \mh\colon\dgcatC\to\algV \)
%			to~\(
%%				\naturaltensor{\mh}
%%				:=
%				\Sum[n\ge1] { \mh[tens=n]\circ\comult[der=n] }
%				\colon
%				\dgcatC\to\tensorcoalg[red]{\algV}
%			\).
			The isomorphisms on objects is given
			as in~\cref{res:cofun_cofree_morphisms}.
			On morphisms,
			suppose that
			\[\begin{tikzcd}[sep=scriptsize]
				\mf[n]
				\ar[r,"{\malpha[n]}",<-]
				&
				\mf[n-1]
				\ar[r,"{\malpha[n-1]}",<-]
				&
				\cdots
				\ar[r,"{\malpha[1]}",<-]
				&
				\mf[0]
			\end{tikzcd}\]
			is a composable string
			of natural transformations
%			maps
			in~\( \cathom[\catquiv[star]]{\dgcatC,\algV} \),
			that is,
			\(
				%\malpha[i] \in \coder { \mf[i-1] , \mf[i] }
				\malpha[i] \in
				\cathom[\catquiv[star]]{\dgcatC,\algV}{ \mf[i-1] , \mf[i] }
				\cong
%				=
%				\Hom[\catquiv[star]{\catob{\dgcatC}}]{
%					\dgcatC[augment],
%					{
%						\map{\mf[i-1]\times\mf[i]}[spar,qpull]{
%							\algV
%						}
%					}
%				}
				\coder{\mf[i-1],\mf[i]}
			\).
%			
%			where \( \malpha[i] \)~is 
%			the leftwards arrow
%			takes the
%			tensor~\( \malpha[n] \tens \cdots \tens \malpha[1] \),
%			where
%			\(
%				%\malpha[i] \in \coder { \mf[i-1] , \mf[i] }
%				\malpha[i] \in
%				\cathom{\dgcatC,\dgcatD}{ \mf[i-1] , \mf[i] }
%				=
%				\Hom[\catgrquiv{\catob{\dgcatC}}]{
%					\dgcatC[augment],
%					{
%						\map{\mf[i-1]\times\mf[i]}[spar,qpull]{
%							\dgcatD
%						}
%					}
%				}
%			\),
			Then the leftwards arrow takes the
			tensor~\( \malpha[n]\tens\cdots\tens\malpha[1] \)
			to the natural
			transformation~\(
				\naturaltensor{\malpha[n],...,\malpha[1]}
			\)
%			transformation
			whose \( N \)th
			component~\(
				\naturaltensor{\malpha[n],...,\malpha[1]}[N]
				\colon
				\dgcatC
				\to
				\map{\mf[0]\times\mf[n]}[spar,qpull]{{\algV[tens=N]}}
			\)
			is
			given by the sum of terms
			\[
%				\naturaltensor{\malpha[n],...,\malpha[1]}[N]
%				=
				\map {
					\mf[n,tens=r_n]
					\tens
					\malpha[n]
					\tens
					\mf[n-1,tens=r_{n-1}]
					\tens
					\cdots
					\tens
					\malpha[1]
					\tens
					\mf[0,tens=r_0]
				}[spar=\big]\circ\comult[der=N]
			\]
			with \( N = n + \Sum { r_i } \).
%			This generalizes the formula in~\localref{res:cofun_cofree_precase}.
			Here we regard each~\( \mf[i] \)
			as a natural transformation~\( \mf[i]\to \mf[i] \)
			of degree~\( 1 \)
			via~\(
				\dgcatC
				\onto
				\dgcatC[red,smash]
				\to
				\map{\mf[i]\times\mf[i]}[spar,qpull]{ \algV }
			\).
		\end{propositionlist}
	\end{propositionbreak}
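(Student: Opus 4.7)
The plan is to treat (iii) as the computational anchor and deduce (i) and (ii) from it. For (iii), I would test both sides of the claimed isomorphism against an arbitrary $\dgcatE \in \catstcocat[coaug]$ and construct a natural bijection of hom-sets chaining four adjunctions: the universal property of the internal hom (absorbing $\dgcatC$ into the source), cofreeness of $\tensorcoalg{\algV}$ via \cref{res:cofun_cofree_morphisms} (turning a cocategory morphism out of $\dgcatE \tens[\ringk] \dgcatC$ into a quiver morphism on its reduction), the closed monoidal structure on augmented \cocatstar-quivers (moving $\dgcatC$ across the tensor), and cofreeness once more to repackage the result as a morphism into $\tensorcoalg{\cathom[\catstquiv[aug]]{\dgcatC, \algV}[red]}$. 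The delicate point is the handling of augmentations, since $(\dgcatE \tens[\ringk] \dgcatC)[red]$ does not split as a tensor product of reductions; I would resolve this by explicit decomposition along the canonical splittings of both $\dgcatE$ and $\dgcatC$ and check that the parametrizations agree on each summand.

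For the explicit morphism formula in (iii), I would unfold the above bijections on a composable string $\malpha[n] \tens \cdots \tens \malpha[1]$ using \cref{res:cofun_cofree_precase} in reverse. That proposition writes a coderivation into a cofree cocategory as a sum involving the iterated comultiplication $\comult[der=N]$, so inserting identity factors $\mf[i,tens=r_i]$ at each step of the string yields the advertised formula with $N = n + \Sum{r_i}$. The main obstacle throughout is sign management via the co-Leibniz bijection~\eqref{eq:coleibniz}: each $\malpha[i]$ carries its own degree, and the signs must reassemble correctly after the tensor is re-expanded, so I would track them factor by factor rather than invoke a generic formula.

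For (i), a general cocomplete coaugmented $\dgcatD$ is not cofree, but the canonical map $\dgcatD \to \tensorcoalg{\dgcatD[red]}$ obtained by applying \cref{res:cofun_cofree_morphisms} to $\id \colon \dgcatD[red] \to \dgcatD[red]$ identifies $\dgcatD$ as the equalizer of two cocategory maps between cofree objects (the comultiplication of $\dgcatD$ against its iterated cofree image). Taking the corresponding equalizer of cofree internal homs furnished by (iii) produces $\catstcofun{\dgcatC, \dgcatD}$, and since the construction factors through augmented quivers, the inclusion $\catstcofun{\dgcatC, \dgcatD} \subset \cathom[\catstquiv[aug]]{\dgcatC, \dgcatD}$ is automatic. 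For (ii), a morphism between dg-cocategories is a morphism of underlying graded cocategories commuting with the differentials; applying the graded version of (i) and restricting to this compatibility condition identifies $\catdgcofun{\dgcatC, \dgcatD}$ as the full subcocategory of $\catgrcofun{\dgcatC, \dgcatD}$ described in the statement.
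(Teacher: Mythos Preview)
The paper does not supply its own proof of this proposition: it is stated with attribution to \textcite{keller} (Theorem~5.3, Lemma~5.2, Lemma~5.4) and immediately used. So there is no in-paper argument to compare against.

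Your strategy is sound and is essentially the one Keller uses. The Yoneda-style chain of bijections for~(iii) is the standard route, and your identification of the delicate point---that $\overline{\dgcatE\tens[\ringk]\dgcatC}$ is not $\overline{\dgcatE}\tens[\ringk]\overline{\dgcatC}$ but rather has cross-terms---is exactly right; handling this via the splittings is what makes the argument go through. One remark on~(i): your equalizer presentation of a general cocomplete~$\dgcatD$ inside cofree cocategories is correct in spirit, but you should be explicit that cocompleteness is what guarantees the canonical map $\dgcatD\to\tensorcoalg{\overline{\dgcatD}}$ is injective (without it the map need not be monic, and the equalizer description fails). With that caveat, the reduction to the cofree case and the definitional argument for~(ii) are fine.
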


%\begin{parabreak}[The \ainfty-category of \ainfty-functors]
%	The set of \ainfty-functors~\( \dgcatA \to \dgcatB \)
%	form the objects of an \( \ainfty \)-category,
%	denoted~\( \catainftyfun { \dgcatA , \dgcatB } \),
%	called the \textdef{\ainfty-category of \ainfty-functors}.
	Suppose now that \( \dgcatA \) and~\( \dgcatB \)
	are non-unital \ainfty-categories, and let us
	show the existence of a non-unital \ainfty-category of
	non-unital \ainfty-functors~\( \dgcatA \to \dgcatB \).
%	We introduce it following \textcite[section~5]{lyu}
%	and~\textcite{keller}.
	Since the underlying graded cocategory of~\( \Bar{}{\dgcatB} \) is cofree, we obtain
	from~\cref{res:cofun_cofree} that the graded
	cocategory
	\begin{align*}
%		\MoveEqLeft
		\catgrcofun{ \Bar{}{\dgcatA} , \Bar{}{\dgcatB} }[smash]
		=
		%\tensorcoalg { \catcofun[gr] { \Bar{}{\dgcatA} , \dgcatB}[smash] }
		\tensorcoalg{ \cathom[\catgrquiv]{ \Bar{}{\dgcatA} , {\dgcatB[shift=1]} }}[smash]
%	\\
%		&=
%		\tensorcoalg[red] { { \cathom[\catquiv[gr]] { \Bar[red]{}{\dgcatA} , {\dgcatB} }[shift=1] } }[smash]
	\end{align*}
	is also cofree.
%	Since both of the arguments are in fact dg-cocategories,
%	the graded cocategory~\( \catcofun[gr] { \Bar[red]{}{\dgcatA} , \Bar[red]{}{\dgcatB}}[smash] \)~also
%	inherits the differential via the differential on each hom space.
	Also, \cref{res:dgcofun_subset_grcofun} shows that,
%	,
	as a graded cocategory,
	\(
		\catdgcofun{\Bar{}{\dgcatA},\Bar{}{\dgcatB}}
		\subset
		\catgrcofun{\Bar{}{\dgcatA},\Bar{}{\dgcatB}}
	\)~is the full subcocategory
%	of~\(
%		\catgrcofun{\Bar{}{\dgcatA},\Bar{}{\dgcatB}}
%	\)
	consisting of morphisms of graded cocategories commuting with the differential.
	Thus the underlying graded cocategory of~\(
		\catdgcofun{\Bar{}{\dgcatA},\Bar{}{\dgcatB}}
	\)
	is also cofree, being a full subcocategory of a cofree graded cocategory.
	In other words, it is an \ainfty-category.
	Therefore, we may write it as~\( \Bar{}{ \catainftyfun[nonunital]{ \dgcatA , \dgcatB } } \) and call~\( \catainftyfun[nonunital]{ \dgcatA , \dgcatB } \)
	the (non-unital) \textdef{\ainfty-category of non-unital \ainfty-func\-tors}.
	In other words, the underlying graded quiver of~\(
		\catainftyfun[nu]{\dgcatA,\dgcatB}
	\)
	is a full graded subquiver of the graded quiver~\(
		\cathom[\catgrquiv]{
			\Bar{}{\dgcatA} ,
			{\dgcatB[shift=1]}
		}[shift=-1]
	\),
%	\[
%		\catainftyfun[nu] { \dgcatA , \dgcatB }
%		=
%		\Hom[\catquiv[gr]] {
%			\Bar[red] {}{\dgcatA } , {\dgcatB[shift=1]}
%			}[ shift=-1 ]
%		=
%		\Hom[\catquiv[gr]] {
%			\Bar[red] {}{\dgcatA} , \dgcatB
%		}
%		,
%	\]
	%and from \cref{res:cofun_cofree_precase}, we find that
	%in this underlying quiver,
	and hence we obtain that
	the morphism space between \ainfty-functors~\( \mf , \mg \colon \dgcatA \to \dgcatB \)
	is
	%the shift of the space of coderivations between \( \mf , \mg \colon \Bar[red]{}{\dgcatA}\to\Bar[red]{}{\dgcatB} \):
	\begin{align*}
		%\MoveEqLeft
		\catainftyfun[nu]{ \dgcatA , \dgcatB }{ \mf , \mg }
		%=
		%\coder { \mf , \mg }[shift=-1]
		%=
	%\\
		&=
		\Hom[\catgrquiv{\catob{\dgcatA}}]{ \Bar{}{\dgcatA} ,
		{\map{\mf\times\mg}[spar,qpull] {{\dgcatB[shift=1]}}} }[shift=-1]
	\\
		&=
		\Hom[\catquiv[gr]{\catob{\dgcatA}}]{
			\Bar{}{\dgcatA}
			,
			\map{\mf\times\mg}[spar,qpull] { \dgcatB }
		}
		.
	\end{align*}
	We shall refer to this as the space of \textdef{non-unital \ainfty-transformations}~\( \mf \to \mg \).
	In \( \catdgcofun { \Bar{}{\dgcatA} , \Bar{}{\dgcatB } } \),
	the morphism space will instead be composable tensors~\(
		\naturaltensor{
			\ms{\malpha[n]},
			...,
			\ms{\malpha[1]}
		}
	\),
	where~\(
		\malpha[i]
		\in
		\catainftyfun[nu]{\dgcatA,\dgcatB}{\mf[i-1],\mf[i]}
	\).
	
	\begin{example}\label{ex:functors_as_transformations}
		Since a non-unital \ainfty-functor~\( \mf \colon \dgcatA \to \dgcatB \)
		is a map of graded quivers~\( \Bar[red]{}{\dgcatA} \to \dgcatB[shift=1] \),
		we may regard it as a
		non-unital \ainfty-trans\-forma\-tion~\( \mf \to \mf \)
		of degree~\( 1 \)
		via the composition
		\(
			\Bar{}{\dgcatA}
			\onto
			\Bar[red]{}{\dgcatA}
			\to
			\dgcatB[shift=1]
%			\xto{\momega}
%			\dgcatB
		\).
%		,
%		which we shall write as~\( \momega\circ\mf \).
		Note that it is \emph{not} an identity morphism on~\( \mf \),
		as it does not even have degree zero.
	\end{example}
	
%	We\fxfatal{We need to introduce the shifting operators, including the signs of the composition \( s^{\tens n} \circ \omega^{\tens n} \).} can describe the differential on~\( \catainftyfun[nonunital] { \dgcatA , \dgcatB } \) more explicitly.
%	Indeed, the
%	differential~\(
%		\dif
%		\colon
%		\Bar{}{ \catainftyfun[nonunital] { \dgcatA , \dgcatB } }
%		\to
%		\Bar{}{ \catainftyfun[nonunital] { \dgcatA , \dgcatB } }[shift=1]
%	\)
%	is
%	an \( \tup { \id , \id } \)-coderivation
%	on the cofree
%	cocategory~\(
%		\tensorcoalg{ { \catainftyfun[nonunital] { \dgcatA , \dgcatB}[shift=1] } }
%	\).
%	So we get from \cref{res:cofun} that it is determined
%	%by the restriction
%	%to~\( \catainftyfun[nonunital] { \dgcatA , \dgcatB } \).
%	by the
%	restriction
%	\[
%		\mpr[1]\circ\dif
%		\colon
%		\Bar[red]{}{ \catainftyfun[nonunital] { \dgcatA , \dgcatB } }
%		\to
%		\catainftyfun[nonunital] { \dgcatA , \dgcatB }[shift=1]
%		.
%	\]
	We can describe the differential
	\[
		\dif
		\colon
		\Bar{}{ \catainftyfun[nonunital]{ \dgcatA , \dgcatB } }
		\to
		\Bar{}{ \catainftyfun[nonunital]{ \dgcatA , \dgcatB } }[shift=1]
	\]
	more explicitly.
	A morphism on the \anlhs consists of a sum of tensors
	\[
		\ms[tens=n]{
			\malpha[n]
			%\tens
			%\malpha[n-1]
			\sotimes
			\cdots
			\sotimes
			\malpha[1]
		}
		\in
		\alg{
			\catainftyfun[nu]{\dgcatA,\dgcatB}{\mf[n-1],\mf[n]}
			%\tens
			%\catainftyfun{\dgcatA,\dgcatB}{\mf[n-2],\mf[n-1]}
			\tens
			\cdots
			\tens
			\catainftyfun[nu]{\dgcatA,\dgcatB}{\mf[0],\mf[1]}
		}[spar,shift=n]
		.
	\]
%	where we have
%	\(
%		\malpha[i]
%		\in
%		\catainftyfun{\dgcatA,\dgcatB}{\mf[i-1],\mf[i]}
%		\subset
%		\cathom[\catquiv] { \Bar{}{\dgcatA} , \Bar{}{\dgcatB } }{ \mf[i-1],\mf[i] }
%		=
%		\Prod[\vx,\vy\in\dgcatA] {
%			\Hom[*] {
%				\Bar{}{\dgcatA}{\vx,\vy} ,
%				\Bar{}{\dgcatB}{\mf[i-1]{\vx},\mf[i]{\vy}}
%			}
%		}
%	\)
%	In other words, \( \malpha \)~is the free composition
%	of the coderivations
%	\(
%		\mf[0]
%		\xto{\malpha[1]}
%		\mf[1]
%		\xto{\malpha[2]}
%		\cdots
%		\xto{\malpha[n]}
%		\mf[n]
%		.
%	\)
%	Then \( \dif { \malpha[n]\tens\cdots\malpha[1] } \)~h
	Then
	\begin{align*}
		\dif[par] {{ \ms[tens=n]{ \malpha[n]\tens\cdots\tens\malpha[1] } }}
		&=
		\dif[\Bar{}{\dgcatB}]
		\circ
		\ms[tens=n]{ \malpha[n]\tens\cdots\tens\malpha[1] } 
	\\
		& \qquad\qquad
		{}-
		(-1)^{\dgcatdeg { \malpha[n] } + \cdots + \dgcatdeg { \malpha[1] }-n}
		{ \ms[tens=n]{ \malpha[n]\tens\cdots\tens\malpha[1] } }
		\circ
		\dif[\Bar{}{\dgcatA}]
	.\end{align*}
	Applying the projection~\( \mpr[1]\colon\Bar{}{\dgcatB}\to\dgcatB[shift=1] \),
	the second term vanishes unless~\( n = 1 \)
	(see~\cref{res:cofun_cofree}).
	This allows us to calculate the \( n \)th~component
	\[
%		\mult[n,upper=\catainftyfun[nu]]{
%			\malpha[n]\tens\cdots\tens\malpha[1]
%		}[i]
%		=
%		-
%		(-1)^{i(i-1)/2}
%		\momega
%		\circ
%		\map{\mpr[1]\circ\dif}[par] { \malpha[n]\tens\cdots\tens\malpha[1] }
%		\circ
%		%\ms[tens=n]
%		\ms[tens=i]
%		\colon
%		%\Bar[red]{}{\dgcatA}
%		\tensorcoalg[red]{\dgcatA}[d=i]
%		\longto
%		\dgcatB
		\mult[n,upper=\catainftyfun]
		=
		-
%		(-1)^{n(n-1)/2}
		\momega
		\circ
		\mpr[1]
		\circ
		\dif
		\circ
%		\ms[tens=n]
		\momega[tens=n,spar,inv]
		\colon
		\catainftyfun[nu,tens=n]
		\longto
		\catainftyfun[nu,shift=2-n]
		.
	\]
%	(note that~\( \momega[tens=n,spar,inv] = (-1)^{n(n-1)/2}\ms[tens=n] \)).
%	is given by
	Plugging in~\( \momega[tens=n,spar,inv] = (-1)^{n(n-1)/2}\ms[tens=n] \)
	and
	\[
		\ms[tens=n]{
			\malpha[n]
			\sotimes
			\cdots
			\sotimes
			\malpha[1]
		}
		=
		(-1)^{\Sum{(i-1)\dgcatdeg{\malpha[i]}}}
		\ms{\malpha[n]}
		\sotimes
		\cdots
		\sotimes
		\ms{\malpha[1]}
		=
		(-1)^{\Sum{(i-1)\dgcatdeg{\malpha[i]}}}
		\naturaltensor{\ms{\malpha[n]},...,\ms{\malpha[1]}}
	\]
	(see~\cref{res:cofun_cofree}
	for the notation),
	we obtain
	that

\begin{proposition}
		The \ainfty-operations~\(
			\mult[n,upper=\catainftyfun]
		\)
		on~\(
			\catainftyfun[nu]{\dgcatA,\dgcatB}
		\)
		are given by
		\begin{align*}
			\mult[1,upper=\catainftyfun]{\malpha[1]}
				&=
				\textstyle
					\dirsum[k\ge1,limits,operator]{
						\mult[upper=\dgcatB,k]
						\circ
						\momega[tens=k]
					}[spar=\big]
					\circ
					\naturaltensor{\ms{\malpha[1]}}
					-
					(-1)^{\dgcatdeg{\malpha[1]}}
					\malpha[1]
					\circ
					\dif[\Bar{}{\dgcatA}]
		\\
			\mult[n,upper=\catainftyfun]{ \malpha[n],...,\malpha[1] }
				&=
				\textstyle
				%\dif[\dgcatB]
				(-1)^{n(n-1)/2}
				(-1)^{\Sum{(i-1)\dgcatdeg{\malpha[i]}}}
				\dirsum[k\ge n,limits,operator]{
					\mult[upper=\dgcatB,k]
					\circ
					\momega[tens=k]
				}[spar=\big]
				\circ
				\naturaltensor{
					\ms{\malpha[n]},...,\ms{\malpha[1]}
				}
		\end{align*}
		for~\( n>1 \).
		The notation~\(
			\naturaltensor{ \ms{\malpha[n]},...,\ms{\malpha[1]} }
		\)
		is explained
		in~\cref{res:cofun_cofree}. It must be evaluated
		using the sign conventions with the tensor product of maps of complexes.
\end{proposition}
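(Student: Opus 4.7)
The proposition follows by direct computation from the identity
$$\mult[n,upper=\catainftyfun] = -\momega\circ\mpr[1]\circ\dif\circ\momega[tens=n,spar,inv]$$
recorded just above, evaluated on a homogeneous tensor $\malpha[n]\sotimes\cdots\sotimes\malpha[1]$. Since by construction the bar construction of $\catainftyfun[nu]{\dgcatA,\dgcatB}$ is a sub-dg-cocategory of the \emph{cofree} graded cocategory $\catgrcofun{\Bar{}{\dgcatA},\Bar{}{\dgcatB}}$, its codifferential is fully determined by its projection onto the cogenerators via the co-Leibniz rule; hence no content lies outside this single calculation, and the plan reduces to carrying out the substitution carefully and tracking the Koszul signs.

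First I would substitute $\momega[tens=n,spar,inv] = (-1)^{n(n-1)/2}\ms[tens=n]$ and expand $\ms[tens=n]\{\malpha[n]\sotimes\cdots\sotimes\malpha[1]\}$ using the Koszul sign rule for shifts recorded in the excerpt. This extracts the universal sign $(-1)^{n(n-1)/2+\Sum{(i-1)\dgcatdeg{\malpha[i]}}}$ and rewrites the input as the natural transformation $\naturaltensor{\ms{\malpha[n]},\ldots,\ms{\malpha[1]}}$ described in \cref{res:cofun_cofree}. Next, the Leibniz rule splits $\dif$ acting on this composition into a $\dif[\Bar{}{\dgcatB}]$-term and a $\dif[\Bar{}{\dgcatA}]$-term. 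By \cref{res:cofun_cofree}(iii), the values of $\naturaltensor{\ms{\malpha[n]},\ldots,\ms{\malpha[1]}}$ lie in tensor powers $\dgcatB[tens=N]$ with $N = n + \Sum{r_i}\ge n$; for $n>1$ this avoids $\dgcatB[shift=1]$ entirely, so the $\dif[\Bar{}{\dgcatA}]$-term is killed by $\mpr[1]$ and only contributes to the $n=1$ formula. For the surviving $\dif[\Bar{}{\dgcatB}]$-term, the projection of $\dif[\Bar{}{\dgcatB}]$ onto $\dgcatB[shift=1]$ is, by the very encoding of the \ainfty-operations on a cofree bar cocategory via the bijection~\eqref{eq:coleibniz}, the sum $\dirsum[k\ge 1]{\mult[k,upper=\dgcatB]\circ\momega[tens=k]}$ (up to an overall sign $-\ms$ that cancels against the outer $-\momega$, using $\momega\circ\ms = \id$). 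The restriction of the source to tensor powers of length $\ge n$ then explains why the summation range contracts from $k\ge 1$ to $k\ge n$ in the $n>1$ formula.

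The main obstacle is sign bookkeeping. In particular, in the $n=1$ case the outer minus sign in $-\momega\circ\mpr[1]\circ\dif\circ\ms$, the Koszul sign $-(-1)^{\dgcatdeg{\ms\circ\malpha[1]}}$ produced by the Leibniz rule, the identity $\dgcatdeg{\ms\circ\malpha[1]} = \dgcatdeg{\malpha[1]}-1$, and the relation $\momega\circ\ms = \id$ must all combine to leave exactly $-(-1)^{\dgcatdeg{\malpha[1]}}\malpha[1]\circ\dif[\Bar{}{\dgcatA}]$ for the second summand. Similarly, for $n>1$ one must check that the interplay of $(-1)^{n(n-1)/2}$ from $\momega[tens=n,spar,inv]$, the Koszul sign from $\ms[tens=n]$ on the homogeneous input, and the sign $-\ms$ in $\mult[k]' = -\ms\circ\mult[k]\circ\momega[tens=k]$ collapses to the clean prefactor stated. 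Once these signs are verified, the two asserted formulas fall out with no further manipulation required beyond the explicit form of $\naturaltensor{\cdots}$ from \cref{res:cofun_cofree}.
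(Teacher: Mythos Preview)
Your proposal is correct and follows essentially the same route as the paper: the paper's argument is precisely the computation recorded in the paragraphs immediately preceding the proposition, starting from $\mult[n,upper=\catainftyfun] = -\momega\circ\mpr[1]\circ\dif\circ\momega[tens=n,spar,inv]$, substituting $\momega[tens=n,spar,inv] = (-1)^{n(n-1)/2}\ms[tens=n]$ and the Koszul expansion of $\ms[tens=n]$, and observing via \cref{res:cofun_cofree} that the $\dif[\Bar{}{\dgcatA}]$-term vanishes under $\mpr[1]$ unless $n=1$. Your identification of the sign cancellations (in particular $-\momega$ against the $-\ms$ in the bijection~\eqref{eq:coleibniz}, and the Leibniz sign in the $n=1$ case) matches the paper's intent.
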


\begin{corollarybreak}
	\begin{corollarylist}
		\item
		The \ainfty-category \( \catainftyfun[nu]{\dgcatA,\dgcatB} \)~is
		unital if \( \dgcatB \)~is.
		Indeed, the unit
		at~\( \mf \in \catainftyfun[nu]{\dgcatA,\dgcatB} \)
		is the
		composition
		\(
			\Bar{}{\dgcatA}
			\to
			\ringk[\catob{\dgcatA}]
			\xto{
				%\map{\mf\times\mf}[spar,qpull]{\meta[\dgcatB]}
				\mf[qpull]{\meta[\dgcatB]}
			}
			%\map{\mf\times\mf}
			\mf[qpull]{\dgcatB}
		\).
		\item
		If \( \dgcatB \)~is a dg-category, so
		is~\( \catainftyfun[nu]{\dgcatA,\dgcatB} \).
		Indeed, if~\( \mult[n,upper=\dgcatB] = 0 \) for all~\( n>2 \),
		\( \mult[n,upper=\smash{\catainftyfun}] \)~vanishes, too.
		%This would imply that \( \catainftyfun[nu] { \dgcatA , \dgcatB } \) was a dg-category as well.
		
		Furthermore, if \( \mf \in \catainftyfun[nu]{\dgcatA,\dgcatB} \)~is
		a non-unital \ainfty-functor, we may regard
		it as a non-unital \ainfty-transformation~\( \mf \) of degree~\( 1 \),
		see~\cref{ex:functors_as_transformations}.
		In this case, the equation~\(\smash{
			\dif[\Bar[red]{}{\dgcatB}]
			\circ
			%\momega\circ
			\mf
			-
			%\momega\circ
			\mf
			\circ
			\dif[\Bar[red]{}{\dgcatA}]
			=
			0
		}\)
		is equivalent to
		\[
			\mult[1,upper=\catainftyfun]{\mf}
			=
			\mult[2,upper=\catainftyfun]{
				%\map{\momega\circ\mf}[spar]
				%\tens
				%\map{\momega\circ\mf}[spar]
				\mf,
				\mf
			}
		.\]
	\end{corollarylist}
\end{corollarybreak}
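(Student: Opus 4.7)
My plan is to prove each of the three claims in the corollary by direct calculation, applying the explicit formulas for the \ainfty-operations on \( \catainftyfun[nu]{\dgcatA,\dgcatB} \) recorded in the preceding proposition. For (i), I would take as candidate unit at \( \mf \in \catainftyfun[nu]{\dgcatA,\dgcatB} \) the natural transformation indicated in the statement, obtained by pulling back the unit \( \meta[\dgcatB] \) of \( \dgcatB \). It then remains to check the unit axioms of an \ainfty-category: that \( \mult[2,upper=\catainftyfun] \) with this element on either side returns the other argument and that \( \mult[n,upper=\catainftyfun] \) vanishes for \( n\neq 2 \) whenever one of its arguments is the unit. Upon plugging the candidate into the formulas, both axioms reduce to the corresponding unit axioms for \( \mult[k,upper=\dgcatB] \), the tensor \( \naturaltensor{\ldots} \) of natural transformations picking out precisely those summands of \( \mult[k,upper=\dgcatB] \) which see the unit in one of the slots.

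The first claim of (ii) is immediate from inspection of the formula: since \( \mult[n,upper=\catainftyfun] \) for \( n > 1 \) is a sum over indices \( k \geq n \) involving \( \mult[k,upper=\dgcatB] \), and the latter vanishes for \( k > 2 \) when \( \dgcatB \) is a dg-category, only \( \mult[1,upper=\catainftyfun] \) and \( \mult[2,upper=\catainftyfun] \) survive, and \( \catainftyfun[nu]{\dgcatA,\dgcatB} \) is thus a dg-category.

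For the second claim of (ii) I would expand both sides of the proposed equivalence in the dg setting. The formula for \( \mult[1,upper=\catainftyfun]{\mf} \) then retains only the \( k = 1 \) and \( k = 2 \) summands of its first term, whereas \( \mult[2,upper=\catainftyfun]{\mf,\mf} \) retains only \( k = 2 \). The essential step is to identify the difference \( \mult[1,upper=\catainftyfun]{\mf} - \mult[2,upper=\catainftyfun]{\mf,\mf} \) with the projection of \( \dif[\Bar[red]{}{\dgcatB}] \circ \mf - \mf \circ \dif[\Bar[red]{}{\dgcatA}] \) onto \( \dgcatB[shift=1] \) along the canonical projection \( \mpr[1] \colon \Bar[red]{}{\dgcatB} \to \dgcatB[shift=1] \). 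For this one invokes the universal property of the cofree graded cocategory \( \Bar[red]{}{\dgcatB} \): its differential is the unique coderivation whose projection to \( \dgcatB[shift=1] \) is \( \mult[1,upper=\dgcatB] \circ \momega + \mult[2,upper=\dgcatB] \circ \momega[tens=2] \), and its action on a length-\( N \) tensor therefore produces exactly the shuffle-type expressions that appear in \( \naturaltensor{\ms{\mf}} \) (from the single-insertion summands) and in \( \naturaltensor{\ms{\mf},\ms{\mf}} \) (from the two-fold insertion). Since any map of graded quivers into the cofree cocategory \( \Bar[red]{}{\dgcatB} \) is determined by its projection to \( \dgcatB[shift=1] \), the equivalence then follows.

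The principal technical obstacle throughout will be the bookkeeping of signs: the prefactors \( (-1)^{n(n-1)/2} \) and \( (-1)^{\Sum{(i-1)\dgcatdeg{\malpha[i]}}} \) from the proposition, the Koszul signs introduced in the conversion \eqref{eq:coleibniz}, and the sign \( -(-1)^{\dgcatdeg{\mf}} \) governing the commutator on the left-hand side of the proposed equivalence must all conspire --- with the odd degree \( \dgcatdeg{\mf} = 1 \) of \( \mf \) viewed as a transformation playing a crucial role --- in order for the two sides of the equality to match term by term.
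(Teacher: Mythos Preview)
The paper gives no separate proof of this corollary; the only justification is the two sentences beginning ``Indeed,\ldots'' embedded in the statement itself, which already record the key observations (the candidate unit, and the vanishing of \( \mult[n,upper=\catainftyfun] \) for \( n>2 \) when \( \mult[k,upper=\dgcatB]=0 \) for \( k>2 \)). Your plan is exactly the natural elaboration of those hints and is correct: each claim does reduce, via the formulas in the preceding proposition, to a direct inspection, and for the final equivalence your identification of \( \mult[1,upper=\catainftyfun]{\mf}-\mult[2,upper=\catainftyfun]{\mf,\mf} \) with the \( \mpr[1] \)-projection of \( \dif[\Bar[red]{}{\dgcatB}]\circ\mf-\mf\circ\dif[\Bar[red]{}{\dgcatA}] \) (together with the fact that a map into a cofree cocategory is determined by that projection) is precisely the right argument.
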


	Suppose now that both \( \dgcatA \) and \( \dgcatB \)
	are \emph{unital} \ainfty-categories.
	The \textdef{\ainfty-category of (unital) \ainfty-functors}
	is the non-full subcategory
	\[
		\catainftyfun{ \dgcatA , \dgcatB }
		\subset
		\catainftyfun[nu]{ \dgcatA , \dgcatB }
	\]
	with objects the unital \ainfty-functors.
	The morphism space~\( \catainftyfun { \dgcatA , \dgcatB }{ \mf , \mg } \)
	consists of the morphisms~\(\smash{
		\malpha
		\in
%		\coder { \mf , \mg }[shift=-1]
%		=
		\Hom[\catquiv[gr] { \catob { \dgcatA } }]{
			\Bar{}{\dgcatA} ,
			{ \map{ \mf \times \mg}[spar,qpull]{{ \dgcatB }} }
		}
	}\)
	satisfying the condition~\(\smash{
		\malpha {{ \id[tens=i] \tens \unitmap \tens \id[tens=j] }}
		=
		0
	}\)
	for all~\( i , j \ge 0 \). Equivalently, we have
	%,
	%if~\( \mf \neq \mg \),
	\begin{equation}\label{eq:catainftyfun_transformations}
		\catainftyfun{\dgcatA , \dgcatB }{ \mf , \mg }
		=
		\Hom[\catquiv[gr]{ \catob { \dgcatA } }]{
			\tensorcoalg{{\dgcatA[red,shift=1]}} ,
			{ \map { \mf \times \mg}[spar,qpull] {{ \dgcatB }} }
		}
	\end{equation}
	where \( \dgcatA[red] \)~is the cokernel of the
	unit map~\( \unitmap\colon\ringk[\catob{\dgcatA}]\to\dgcatA \).
%	If~\( \mf = \mg \), one uses instead the augmented tensor cocategory~\( \tensorcoalg \).\fxfatal{WRONG as stated.}
	
	\begin{example}\label{ex:A_infty_fun_k_n_to_B}
		Suppose that \( \dgcatB \)~is a unital dg-category,
		and consider the unital dg-category~\( \ordset{n}[linearize=\ringk] \), the \( \ringk \)-linearization of the poset~\( \ordset{n} \), regarded as a category. We
		wish to
		calculate~\(
			\catainftyfun { { \ordset{n}[linearize=\ringk] } , \dgcatB }
		\).
		The object set is
		\begin{align*}
			\MoveEqLeft[0.5]
			\Hom[\catainftycat]{
				{ \ordset{n}[linearize=\ringk] } ,
				\dgcatB 
			}
			=
			\Hom[\catainftycat[nu]]{
				{ \ordset{n}[linearize=\ringk,reduce] } ,
				\dgcatB 
			}
			=
			\Hom[\catcocat[dg,ncu]]{
				\Bar[red]{}{ { \ordset{n}[linearize=\ringk,reduce] } } ,
				\Bar[red]{}{\dgcatB}
			}
		\\
			&=
%			\Hom[\catdgcat[nu]] {
%				\Cob[red]{}{
%					\Bar[red]{}{ { \ordset{n}[linearize=\ringk,reduce] } }
%				} ,
%				\dgcatB
%			}
			\Hom[\catdgcocat[coaug]]{
				\Bar[plus]{}{ { \ordset{n}[linearize=\ringk] } } ,
				\Bar[plus]{}{\dgcatB}
			}
			=
			\Hom[\catdgcat]{
				\Cob[plus]{}{ \Bar[plus]{}{{ \ordset{n}[linearize=\ringk] }} } ,
				\dgcatB
			}
			.
		\end{align*}
%		(here “\categoryformat{nu}” means “non-unital”
%		and “\categoryformat{ncu}” means “non-counital”).
		In other words, an object~\( \mf \) consists of
		a collection~\( \vb[0] = \mf{0} , \ldots , \vb[n] = \mf{n} \)
		of objects in~\( \dgcatB \)
		together with a
		collection of 
		morphism~\( \mf[i_0 i_1 \cdots i_k] \colon \vb[i_0] \to \vb[i_k] \)
		of degree~\( 1 - k \)
		for all sequences~\( 0\le i_0 < i_1 < \cdots < i_k \le n\),~\( k>0 \),
		and with differential
		\[
			\dif[par] { \mf[i_0 i_1 \ldots i_k] }
			=
			\Sum[from={j=1},to={k-1}] {{
				(-1)^{k-j}
				\bigl(
					\mf[i_0 \ldots \hat{\imath}_j \ldots i_k]
					-
					\mf[i_j \ldots i_k] \circ \mf[i_0 \ldots i_j]
				\bigr)
			}}
		\]
		with the convention that \( \mf[j_0 j_1 \cdots l_l] = 0 \)
		if the index contains repetitions and~\( l>1 \), and that
		\( \mf[j_0 j_0] = \id \).
		If \( \mf , \mg \colon \Cob[plus]{}{ \Bar[plus]{}{{\ordset{n}[linearize=\ringk]}}} \to \dgcatB \)~are
		two objects,
		\eqref{eq:catainftyfun_transformations}~shows that
		%the space of unital \ainfty-transformations~\( \mf \to \mg \)
		%consist of maps of graded
		an \ainfty-transformation~\( \malpha \colon \mf \to \mg \)
		of degree~\( \dgcatdeg{\malpha} \)
		is a degree~\( \dgcatdeg{\malpha} \) map of graded quivers
		\[
			\malpha
			\colon
			\tensorcoalg {{
				\ordset{n}[linearize=\ringk,reduce,return,shift=1]
			}}
			\to
			\map{\mf\times\mg}[spar,qpull] { \dgcatB }
		,\]
		which is the data
		of a
		map
		\[
			\malpha[i_0 i_1 \cdots i_k]
			=
%			(-1)^k
			\malpha{ \tup{ i_0,...,i_k } }
			\in
			\Hom[\dgcatB,d=\dgcatdeg{\malpha}-k]{ \mf{ i_0 } , \mg{ i_k } }
		\]
		for each sequence~\( 0\le i_0 < i_1 < \cdots < i_k\le n \),~\(
			k\ge0
		\).
		Also, we obtain that the composition
		map~\(
			\map{ \mbeta\circ\malpha }
			:=
			\mult[2]{{ \mbeta \tens \malpha }}
		\)
		is
		\begin{align*}
			\map { \mbeta\circ\malpha }[spar,i_0 i_1 \cdots i_k]
			&=
			-(-1)^{\dgcatdeg{\mbeta}}
			( \mult[upper=\dgcatB,2] \circ \momega[tens=2] )
			\circ
			%\map{ \ms{\mbeta}\tens\ms{\malpha} }[spar,i_0,\ldots,i_k]
			\naturaltensor{\ms{\mbeta},\ms{\malpha}}[i_0,\ldots,i_k]
		\\
			&=
			-(-1)^{\dgcatdeg{\mbeta}}
			\map{ { \mult[2,upper=\dgcatB] \circ \momega[tens=2] } }[spar,par=\big]{{
				\Sum[from={j=0},to={k}]{
					(-1)^{ \dgcatdeg{\ms{\malpha}}(k-j) }
					\ms{\mbeta}[i_j,...,i_k]
					\tens
					\ms{\malpha}[i_0,...,i_j]
				}
			}}
		\\
			&=
			-(-1)^{\dgcatdeg{\mbeta}}
			\Sum[from={j=0},to={k}] {
				(-1)^{ \dgcatdeg{\ms{\malpha}} (k-j) }
				(-1)^{ \dgcatdeg{ { \ms{\mbeta}[i_j,...,i_k] } } (-1) }
				\mbeta[i_j \cdots i_k]
				\circ
				\malpha[i_0 \cdots i_j]
			}
		\\
			&=
			-(-1)^{\dgcatdeg{\mbeta}}
			\Sum[from={j=0},to={k}] {
				(-1)^{ (\dgcatdeg{\malpha} - 1) (k-j) }
				(-1)^{ ( \dgcatdeg{ \mbeta } - (k-j) - 1 ) (-1) }
				\mbeta[i_j \cdots i_k]
				\circ
				\malpha[i_0 \cdots i_j]
			}
		\\
			&=
			\Sum[from={j=0},to={k}] {
				(-1)^{(k-j)\dgcatdeg{\malpha}}
				\mbeta[i_j \cdots i_k]
				\circ
				\malpha[i_0 \cdots i_j]
			}
			.
		\end{align*}
		%and that the differential~\( \dif{\malpha} = \mult[1]{\malpha} \) is
		A similar calculation shows that
		the differential~\( \dif{\malpha} = \mult[1]{\malpha} \) is
		\begin{align*}
			\dif { \malpha }[spar,i_0 \cdots i_k]
			&=
			\dif[\dgcatB,par] { \malpha[i_0 \cdots i_k] }
			-(-1)^{ \dgcatdeg{\malpha} }
			\malpha[par=\big]{
				\dif[par,\Bar{}{{\ordset{n}[linearize=\ringk,return,red]}}]{
					\tup{ i_0,...,i_k }
				}
			}
		\\
			&
					\qquad{}+
					( \mult[2,upper=\dgcatB]\circ\momega[tens=k] )
					\bigl(
						\map{ \ms{\mg}\tens\ms{\malpha} }[spar,i_0\cdots i_k]
						+
						\map{ \ms{\malpha}\tens\ms{\mf} }[spar,i_0\cdots i_k]
					\bigr)
		\\
			&=
			\dif[\dgcatB,par] { \malpha[i_0 \cdots i_k] }
			+
			\Sum[from={j=1},to={k-1}]
				\bigl(
					(-1)^{\dgcatdeg{\malpha}}
					(-1)^{k-j}
					\malpha[i_0 \cdots \hat{\imath}_j\cdots i_k]
		\\
			&
					\qquad{}+
					(-1)^{(k-j)\dgcatdeg{\malpha}}
					\mg[i_j\cdots i_k]\circ\malpha[i_0\cdots i_j]
					-
					(-1)^{\dgcatdeg{\malpha}}
					(-1)^{k-j}
					\malpha[i_j\cdots i_k]\circ\mf[i_0\cdots i_j]
				\bigr)
		.
		\end{align*}
		All higher \ainfty-operations vanish, so that we have an honest dg-category.
		We may simplify the last formula by regarding \( \mf \) and~\( \mg \) as morphisms \( \mf \to \mf \) resp.~\( \mg \to \mg \) of degree~\( 1 \). Then the composition operation defined above makes sense for \( \mf \) and~\( \mg \) as well,
		and we obtain
		\begin{align*}
			\dif{\malpha}
			&=
			\dif[\dgcatB]\circ\malpha
			-
			(-1)^{\dgcatdeg{\malpha}}
			\malpha\circ\dif[\Bar{}{{\ordset{n}[linearize=\ringk,red]}}]
			+
			\mg\circ\malpha
			-
			(-1)^{\dgcatdeg{\malpha}}
			\malpha\circ\mf
		\\
			0
			&=
			\dif[\dgcatB]
			\circ
			\mf
			+ \mf \circ \dif[\Bar[red]{}{{\ordset{n}[linearize=\ringk,red]}}]
			+\mf\circ\mf
			.
		\end{align*}
		In this case, as \( \mf \)~factors
		through~\( \Bar{}{{\ordset{n}[linearize=\ringk,red]}} \onto \Bar[red]{}{{\ordset{n}[linearize=\ringk,red]}} \),
		expressions like~\( \mf[i_0] \)
		with a single index must be interpreted as zero.
	\end{example}
%\end{parabreak}

\endgroup

\begingroup

\chapter{Homotopy limits in dg-categories}\label{chap:holim_dgcat}

%\section{Framings in dg-categories}
%
%A framing on the category~\( \catdgcat \) is due to
%\textcite{holstein}: If \( \dgcatA \in \catdgcat \) and \( \ssetK\in\catsset \), then let
%\(
%	\dgcatA[powering=\ssetK]
%	=
%	\catloc { \ssetK, \dgcatA }
%	=
%	\catdgfun[infty,o] { \lurie{\ssetK} , \dgcatA }
%\).

%\begin{parabreak}[Tabuada's model structure on~\( \catdgcat \)]
	Given a dg-category~\( \dgcatA \), we denote by~\( \dgcatA[z0] \) the dg-category with the same objects as~\( \dgcatA \), but with hom complexes given by
	\(
		%\Hom[ \dgcatA[z0] ] { \vx, \vy } =
		%\cocy[nopar]{0} { \Hom[\dgcatA, * ] { \vx, \vy } }
		\dgcatA[z0,spar]{ \vx , \vy } = \cocy{0}{ \dgcatA{ \vx , \vy } }
	\),
	the set of closed maps~\( \vx \to \vy \) in~\( \dgcatA \). We define~\( \dgcatA[h0] \) analogously.
	A map~\( \mf \in \dgcatA[d=0]{ \vx , \vy } \) in~\( \dgcatA \) is called a \textdef{homotopy equivalence} if its image in~\( \dgcatA[h0] \) is an isomorphism. As in homological algebra, this amounts the existence of a map \( \mg \in \dgcatA[d=0]{ \vy , \vx } \) in the other direction along with correcting morphisms \( \mr[\vx] \in \dgcatA[d=-1]{ \vx , \vx } \) and \( \mr[\vy] \in \dgcatA[d=-1]{ \vy , \vy } \) such that
	\[
		\dif{ \mr[\vx] } = \mg \mf - \id[\vx]
		\qquad\text{and}\qquad
		\dif{ \mr[\vy] } = \mf \mg - \id[\vy].
	\]

	We recall from
	\textcite{tabuada,tabuada_dgcats_vs_simplicial_cats}
	the existence of a model structure on the \( 1 \)-category~\( \catdgcat \) with
	\begin{itemize}
		\item weak equivalences given by \textdef{quasi-equivalences}, i.e.
		functors~\( \mF \colon \dgcatA \to \dgcatB \) such that
		\begin{enumerate}[(i)]
		\item the induced functor~\( \mF[parent=alg,h0] \colon \dgcatA[h0] \to \dgcatB[h0] \) is an equivalence of categories in the classical, non-enriched sense;
		\item for all~\( \vx, \vy \in \dgcatA \),
		\( \mF[\vx, \vy] \colon
		\dgcatA { \vx , \vy } \to \dgcatB { \mF { \vx } , \mF { \vy } } \)
		is a quasi-isomorphism of chain complexes;
		\end{enumerate}
		\item fibrations given by dg-functors~\( \mF \colon \dgcatA \to \dgcatB \) such that
		\begin{enumerate}[(i)]
			\item the induced map
			\(
				\mF \colon
				\dgcatA[*]{ \vx, \vy }
				\to
				\dgcatB[*]{ \mF{ \vx } , \mF{ \vy } }
			\)
			is surjective for all objects~\( \vx, \vy \in \dgcatA \), and
			\item any map \( \mf \colon \mF { \vx } \to \vy \) in~\( \dgcatB \)
			that becomes an isomorphism in~\( \Hdgcat { \dgcatB } \) lifts as \( \mf = \mF { \mg } \) for some morphism \( \mg\colon \vx \to \vx[prime] \) in~\( \dgcatA \)
			that becomes an isomorphism in~\( \Hdgcat { \dgcatA } \).
		\end{enumerate}
	\end{itemize}
%	It is cofibrantly generated,
%	with
%	\begin{itemize}
%		\item the class~\( \setgencof \) consisting of the morphism \( \varnothing \into \ringk \)
%		\item the class~\( \setgentrivcof \)
%	\end{itemize}
	This model structure is combinatorial, see
	\textcite[Proposition~1.3.1.19]{ha}.
%\end{parabreak}

We shall use the tools developed in the preceding chapters to develop a notion of homotopy descent of quasi-coherent sheaves on affine dg-schemes.
%We write \( \face[min=n] \) and~\( \face[max=n] \) for the two unique morphisms~\( \ordset{0} \into \ordset{n} \) in~\( \catdelta \) given by
%\( \face[min=n] { 0 } = 0 \) and~\( \face[max=n] { 0 } = n \).
%In other words, \( \face[min=n] = \face[d=0] \face[d=0] \cdots \face[d=0] \)
%and~\( \face[max=n] = \face[d=n] \face[d=n-1] \cdots \face[d=1] \).
Much of the inspiration is from \textcite{explicit};
however, using the tools developed earlier
(specifically~\cref{ex:A_infty_fun_k_n_to_B}),
we are able to solve their Conjecture~1 and prove their
results in complete generality.

%\begin{theorem}
%	Suppose that \( \ssetX{*} \)~is a simplicial affine scheme with each~\( \ssetX{n} \) being regular and each \( \ssetX{n}\to \ssetX{m} \) flat surjective\fxfatal{This is supposed to be the same as faithfully flat, check this.}.
%	Then we have an equivalence of categories
%	\(
%		\invholim[\catdelta] { \catqcoh{\ssetX{n}} }
%		\cong
%		\catainftycomod { \catqcoh { \ssetX{0} } }
%	\).
%\end{theorem}

Recall from \cref{res:fat_tot} that
the homotopy limit of a \( \catdelta \)-diagram in the model category~\( \catdgcat \) is given by
\[\textstyle
	\invholim[\catdelta] { \cosimpdgcatA{n} }
	=
	\invholim[\catdelta[plus]] { \cosimpdgcatA{n} }
	=
	\End[\ordset{n}\in\catdelta[plus]] { \fibrep[deltaplus]{ \cosimpdgcatA{n} }{n} }
\]
where \( \fibrep[deltaplus] \colon \catdgcat\to\catdgcat[diag={\catdelta[plus,op]},inj,smash] \) is a fibrant replacement functor, taking a dg-category~\( \dgcatB \) to an injectively fibrant replacement of the constant \( \catdelta[plus] \)-diagram at~\( \dgcatB \).
We obtain such a functor from Holstein, Poliakova and Arkhipov:

\begin{theorembreak}[{Theorem (\cite[section~3]{holstein} and \cite{holstein_note})}]\label{res:holstein_resolution}
	If \( \dgcatB \)~is a dg-category, then
	\( \simpdgcatB{*} \in \catdgcat[diag={\catdelta[plus,op]}] \)
	given by
	\[
		\simpdgcatB{n}
%		= \catloc[infty] { \simp{n} , \dgcatA }
%		= \catdgfun[infty,o] { \simp[dg]{n} , \dgcatA }
		= \catainftyfun[o]{ {\ordset{n}[linearize=\ringk]} , \dgcatB }
	\]
	is an injectively fibrant replacement of the constant \( \catdelta[plus] \)-diagram at~\( \dgcatB \). Here,
	the~“\( \mathbf{\circ} \)”
%	means an object is an
%	\ainfty-functor~\( \mf \) such that
%	\(
%		\mf[1]
%		\colon
%		\ordset{n}[linearize=\ringk,arg={i,j}]
%		\to
%		\dgcatB { \mf { i } , \mf { j } }
%	\)
%	sends non-zero maps to isomorp
	means that the objects are \ainfty-functors~\( \mf \) such
	that~\( \co[nopar]{0} { \mf } \colon \ordset{n}[lin=\ringk]\to\dgcatB[h0] \)
	sends non-zero maps to isomorphisms.
	Equivalently, in the notation of~\cref{ex:A_infty_fun_k_n_to_B},
	\( \co[nopar]{0}{\mf[ij]} \)~is an isomorphism for all~\( i<j \).
\end{theorembreak}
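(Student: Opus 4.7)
The plan is to verify the two defining requirements of an injectively fibrant replacement: that the natural map from the constant diagram at~$\dgcatB$ to~$\simpdgcatB{*}$ is a componentwise quasi-equivalence, and that~$\simpdgcatB{*}$ is injectively fibrant in~$\catdgcat[diag={\catdelta[plus,op]},inj,smash]$. Since the construction and its homotopical properties are the main content of~\textcite{holstein} and~\textcite{holstein_note}, the aim here is to outline the two verifications and defer the technical core to those references.

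For componentwise weak equivalence, the natural inclusion sends $\vb\in\dgcatB$ to the constant $\ainfty$-functor at~$\vb$, producing a dg-functor $\dgcatB\to\simpdgcatB{n}$ for every~$n\ge 0$. Essential surjectivity on the homotopy category follows from the ``$\circ$'' restriction: each $\mf\in\simpdgcatB{n}$ has $\co[nopar]{0}{\mf[0i]}\colon\mf{0}\to\mf{i}$ an isomorphism in~$\dgcatB[h0]$, and one assembles these into an isomorphism between~$\mf$ and the constant functor at~$\mf{0}$ by choosing compatible homotopy inverses and packaging the coherence data into the higher $\malpha[i_0\cdots i_k]$. To see that hom complexes are quasi-isomorphic, fix $\vx,\vy\in\dgcatB$ and expand the hom complex between two constant functors using~\cref{ex:A_infty_fun_k_n_to_B}: its elements are tuples $\tup{\malpha[i_0\cdots i_k]}$ with $\malpha[i_0\cdots i_k]\in\dgcatB[d=\dgcatdeg{\malpha}-k]{\vx,\vy}$, and the explicit differential formula identifies the complex with a cosimplicial totalization built from copies of~$\dgcatB[*]{\vx,\vy}$ that admits an explicit contraction onto its $0$-cosimplices.

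For injective fibrancy, the approach is Reedy: since $\catdelta[plus,op]$ is a Reedy category, it suffices to check that the matching map $\simpdgcatB{n}\to M_n\simpdgcatB{*}$ is a fibration in the Tabuada model structure for each $n\ge 0$. The matching object is naturally identified with the dg-category of $\ainfty$-functors out of the boundary $\partial\ordset{n}[linearize=\ringk]$, understood as the colimit of the face dg-categories $\ordset{n-1}[linearize=\ringk]$ glued along their own faces, and the matching map is restriction along the boundary inclusion. Surjectivity on hom complexes is immediate from the freeness of the top cell in the cobar description of~$\ordset{n}[linearize=\ringk]$: extending a tuple of boundary data requires only the choice of an unconstrained top datum~$\malpha[0\,1\,\cdots\,n]$. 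The main obstacle is the second fibration condition, namely lifting of homotopy isomorphisms: given boundary-compatible data and a prescribed homotopy isomorphism on the top cell, one must coherently extend the $\ainfty$-functor to all of~$\ordset{n}[linearize=\ringk]$ so that the isomorphism is realized. This is the content of the explicit construction in~\textcite[section~3]{holstein}, achieved by induction on the cellular skeleton of the nerve together with the standard contracting homotopies of cofibrant $\ainfty$-resolutions; injective (as opposed to merely Reedy) fibrancy over $\catdelta[plus,op]$ then follows from the matching criterion and combinatoriality of~$\catdgcat$.
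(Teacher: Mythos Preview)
The paper does not supply its own proof of this theorem: it is quoted as a result of \textcite{holstein} and \textcite{holstein_note}, and the paper immediately proceeds to use it as a black box in the computation of the homotopy limit. There is therefore no argument in the paper to compare your proposal against.

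Your outline is broadly the right shape for how such a statement is established, and it correctly exploits the key structural fact that $\catdelta[plus,op]$ is an inverse Reedy category, so that injective fibrancy reduces to checking matching maps. One point to be careful about: you phrase the final step as ``injective (as opposed to merely Reedy) fibrancy over $\catdelta[plus,op]$ then follows from the matching criterion and combinatoriality of~$\catdgcat$'', but combinatoriality is not what is doing the work here---for an inverse category the Reedy and injective model structures coincide outright, so Reedy fibrancy \emph{is} injective fibrancy. Beyond that, your sketch is at the level of a plausibility argument that defers the actual content to the cited sources, which is exactly what the paper itself does.
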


%As a consequence, we deduce that

%\begin{corollary}
	In other words,
	if \( \cosimpdgcatA{*} \)~is a cosimplicial system of dg-categories,
	then its homotopy limit is given by
	\[\textstyle
		\invholim[\catdelta] { \cosimpdgcatA{n} }
		=
		\End[\ordset{n}\in\catdelta[plus]] {{
			\catainftyfun[o]{ {\ordset{n}[linearize=\ringk]}  , \cosimpdgcatA{n} }
		}}
		.
	\]
%\end{corollary}
We wish to evaluate this expression more explicitly:

\begin{proposition}\label{res:homotopy_limit_dgcat}
	Suppose~\( \cosimpdgcatA{*} \) is a cosimplicial system of dg-categories. Then the homotopy limit
	\(
		\invholim[\catdelta] { \cosimpdgcatA{*} }
	\)
	is the dg-category with objects~\( \tup { \algM , \mtheta } \)
	where~\(
		\mtheta = \tup { {\mtheta[n]} }[n\ge 1]
	\)
	is a collection of morphisms~%
	\(
		\mtheta[n]
		\in
		\Hom[\cosimpdgcatA{n} , d=1-n]{
			{ \coface[max=n,nopar] { \algM } } ,
			{ \coface[min=n,nopar] { \algM } }
		}
	\),
	satisfying
	%~\( \dif[par]{\mtheta[1]} = 0 \)
	%and
	\begin{equation}\label{eq:theta_n_equation}
		\dif[par] { {\mtheta[n]} }
		=
		-
		\map{\mtheta\circ\mtheta}[spar,n]
		+
		\Sum[from={i=1},to={n-1}] {{
			(-1)^{n-i}
%			\bigl(
				\coface[d=i] {{ \mtheta[n-1] }}
%				-
%				\coface[min=i] {{ \mtheta[n-i] }}
%				\circ
%				\coface[max=n-i] {{ \mtheta[i] }}
%			\bigr)
		}}
%		,
%		\qquad n\ge2,
	\end{equation}
	and with \( \mtheta[1] \)~invertible in~\( \co{0} { \cosimpdgcatA{1} } \).
	The composition~\( \mtheta\circ\mtheta \)
	is evaluated via~\eqref{res:morphisms_in_tot_composition}
	below with the conventions~\( \dgcatdeg{\mtheta} = 1 \)
	and~\( \mtheta[0] = 0 \).
	A morphism~\(
		\malpha
		\colon
		\tup{\algM,\mtheta}
		\to
		\tup{\algN,\meta}
	\)
	of degree~\( \dgcatdeg{\malpha} \)
	is a collection~\(
		\malpha = \tup { \malpha[n] }[n\ge0]
	\)
	of morphisms~\(
		\malpha[n]
		\in
		\Hom[\cosimpdgcatA{n},d=\dgcatdeg{\malpha}-n]{
			{ \coface[max=n,nopar] { \algM } } ,
			{ \coface[min=n,nopar] { \algN } }
		}[smash]
	\).
	The composition of two morphisms is
	\begin{equation}\label{res:morphisms_in_tot_composition}
		\map{\mbeta\circ\malpha}[spar,n]
		=
		\Sum[from={i=0},to={n}] {
			(-1)^{\dgcatdeg{\malpha}(n-i)}
			\coface[min=i,nopar] { \mbeta[n-i] }
			\circ
			\coface[max=n-i,nopar] { \malpha[i] }
		}
	.\end{equation}
	The differential on a morphism is
	\begin{equation}\label{res:morphisms_in_tot_differential}
%	\begin{split}
			\dif[par]{\malpha}[n]
%			&
			=
			\dif[par]{\malpha[n]}
			+
			\map{\meta\circ\malpha}[spar,n]
			-
			(-1)^{\dgcatdeg{\malpha}}
			\map{\malpha\circ\mtheta}[spar,n]
			+
			(-1)^{\dgcatdeg{\malpha}}
			\Sum[from={j=1},to={n-1}]
%				\bigl(
					(-1)^{n-j}
					\coface[d=j,nopar] { \malpha[n-1] }
					.
%		\\
%					&{}
%					\qquad
%					+			
%					(-1)^{\dgcatdeg{\malpha}(n-j)}
%					\coface[min=j,nopar] { \meta[n-j] }
%					\circ
%					\coface[max=n-j,nopar] { \malpha[j] }
%		\\
%					&{}
%					\qquad
%					-
%					(-1)^{\dgcatdeg{\malpha}}(-1)^{n-j}
%					\coface[min=j,nopar] { \malpha[n-j] }
%					\circ
%					\coface[max=n-j,nopar] { \mtheta[j] }
%					
%				\bigr)	
%		.\end{split}
	\end{equation}
%	where the compositions are evaluated
%	via~\eqref{res:morphisms_in_tot_composition}
%	with the convention~\(
%		\dgcatdeg{\mtheta}
%		=
%		\dgcatdeg{\meta}
%		=
%		1
%	\).
\end{proposition}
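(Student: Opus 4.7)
The plan is to combine \cref{res:holstein_resolution} with \cref{res:fat_tot} to reduce the homotopy limit to the end
\[
	\End[\ordset{n}\in\catdelta[plus]]{
		\catainftyfun[o]{\ordset{n}[linearize=\ringk], \cosimpdgcatA{n}}
	},
\]
and then unwind this end using the completely explicit description of $\catainftyfun{\ordset{n}[linearize=\ringk], \dgcatB}$ from \cref{ex:A_infty_fun_k_n_to_B}. Recall that an object of the latter is a tuple of objects $\vb[i]$ and morphisms $\mf[i_0\cdots i_k]$ of degree $1-k$, and morphisms between such are multi-indexed families with prescribed composition and differential.

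First I would analyze objects of the end: these are dinatural families $\tup{\mf[upper=(n)]}[n\ge0]$ with $\mf[upper=(n)]\in\catainftyfun[o]{\ordset{n}[linearize=\ringk], \cosimpdgcatA{n}}$, the dinaturality condition being that for every $\mvarphi\colon\ordset{m}\to\ordset{n}$ in~$\catdelta[plus]$, the restriction of $\mf[upper=(n)]$ along $\mvarphi$ agrees with the image of $\mf[upper=(m)]$ under $\mvarphi[push]\colon\cosimpdgcatA{m}\to\cosimpdgcatA{n}$. Applying this to the coface maps and using the cosimplicial identities, a direct induction forces $\vb[i,upper=(n)] = \coface[min=i]{\coface[max=n-i]{\algM}}$ for a single underlying object $\algM = \vb[0,upper=(0)]\in\cosimpdgcatA{0}$, and reduces each $\mf[upper=(n),i_0\cdots i_k]$ with $k<n$ to the coface-image of $\mtheta[k] := \mf[upper=(k),01\cdots k]$ given by applying the face operator corresponding to the complementary indices. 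The invertibility of $\mtheta[1]$ in $\co{0}{\cosimpdgcatA{1}}$ is then exactly the ``$\boldsymbol{\circ}$'' condition from \cref{res:holstein_resolution}.

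Next, I would translate the dg-functor equation from \cref{ex:A_infty_fun_k_n_to_B} applied to the sequence $01\cdots n$ into~\eqref{eq:theta_n_equation}. That equation splits the right-hand side of $\dif{\mtheta[n]}$ into (a)~a sum of ``inner omission'' terms $(-1)^{n-j}\mf[01\cdots\hat\jmath\cdots n]$ for $1\le j\le n-1$, which under the reduction become $(-1)^{n-j}\coface[d=j]{\mtheta[n-1]}$, and (b)~a sum of ``cut'' terms $-(-1)^{n-j}\mf[j\cdots n]\circ\mf[01\cdots j]$, which under the reduction assemble into $-\map{\mtheta\circ\mtheta}[spar,n]$ after comparing with the composition formula~\eqref{res:morphisms_in_tot_composition} and the convention $\mtheta[0]=0$. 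The boundary omissions $j=0,n$ do not appear in~\eqref{eq:theta_n_equation} because the corresponding sequences $1\cdots n$ and $01\cdots(n-1)$ reduce, by dinaturality, to $\coface[d=0]{\mtheta[n-1]}$ and $\coface[d=n]{\mtheta[n-1]}$, which are already absorbed into the two endpoint summands of $\mtheta\circ\mtheta$ at~\eqref{res:morphisms_in_tot_composition}.

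Morphisms are handled identically: a morphism $\malpha$ of degree $\dgcatdeg{\malpha}$ is a dinatural family $\tup{\malpha[upper=(n)]}$ in $\catainftyfun$, which by the same reduction is determined by its top components $\malpha[n]:=\malpha[upper=(n),01\cdots n]$ of degree $\dgcatdeg{\malpha}-n$. The composition and differential formulas~\eqref{res:morphisms_in_tot_composition} and~\eqref{res:morphisms_in_tot_differential} then follow by plugging the sequence $01\cdots n$ into the corresponding explicit formulas of \cref{ex:A_infty_fun_k_n_to_B} and using dinaturality to rewrite $\mbeta[upper=(n),j\cdots n]=\coface[min=j]{\mbeta[n-j]}$ and $\malpha[upper=(n),01\cdots j]=\coface[max=n-j]{\malpha[j]}$; the cross terms with $\mf$ and~$\mg$ in the differential of \cref{ex:A_infty_fun_k_n_to_B} become exactly the terms with $\meta$ and~$\mtheta$ in~\eqref{res:morphisms_in_tot_differential}. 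The main obstacle, and the only non-routine part, is sign bookkeeping: matching the signs in the composition and differential of \cref{ex:A_infty_fun_k_n_to_B} (which use the $\ms,\momega$ conventions of~\eqref{eq:coleibniz}) with those in \eqref{eq:theta_n_equation}--\eqref{res:morphisms_in_tot_differential} after passing to the top-degree data.
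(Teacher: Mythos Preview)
Your approach is essentially the same as the paper's: both invoke \cref{res:holstein_resolution} and \cref{res:fat_tot} to reduce to the end over~$\catdelta[plus]$, then use naturality (what you call dinaturality) with the inclusions $\ordset{l}\hookrightarrow\ordset{n}$ to show that an element of the end is determined by the single object $\algM=\mF[d=0]{0}$ together with the non-degenerate data $\mtheta[n]=\mF[d=n]{\mf[01\cdots n]}$ (and analogously for morphisms), after which the formulas of \cref{ex:A_infty_fun_k_n_to_B} specialise directly to \eqref{eq:theta_n_equation}--\eqref{res:morphisms_in_tot_differential}.

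One small point: your paragraph about the ``boundary omissions $j=0,n$'' being ``absorbed into the two endpoint summands of $\mtheta\circ\mtheta$'' is unnecessary and slightly misleading. The differential formula in \cref{ex:A_infty_fun_k_n_to_B} already sums only over $1\le j\le k-1$, so there are no $j=0$ or $j=n$ terms to account for; and with the convention $\mtheta[0]=0$, the $i=0$ and $i=n$ summands of $(\mtheta\circ\mtheta)_n$ in \eqref{res:morphisms_in_tot_composition} simply vanish. The two index ranges match on the nose, with nothing to cancel.
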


This proves Conjecture~1 in \textcite{explicit}.

\begin{proof}[Proof of \cref{res:homotopy_limit_dgcat}]
	Using \citeauthor{holstein}'s theorem, we obtain the formula
	\begin{align*}
		\MoveEqLeft
		{\textstyle \invholim[\catdelta] { \cosimpdgcatA{n} } }
		=
		\End[\ordset{n}\in\catdelta[plus]] {
			\catainftyfun[o]{
				{\ordset{n}[linearize=\ringk]} ,
				\cosimpdgcatA{n}
			}
		}
	\\
			& = \Eq[par=\Big]{{
				\Prod[\ordset{n}\in\catdelta[plus]] {
					\catainftyfun[o]{
						{\ordset{n}[linearize=\ringk]} ,
						\cosimpdgcatA{n}
					}
				}
				\rightrightarrows
				\Prod[\ordset{n}\into\ordset{m}] {
					\catainftyfun[o]{
						{\ordset{n}[linearize=\ringk]} ,
						\cosimpdgcatA{m}
					}
				}
			}}.
	\end{align*}
	%A
	%functor~\( \mF \in \catdgfun[infty,o] { \simp[dg]{n} , \cosimpdgcatA{n} } \)
	%is determined by the
	%objects~\( \mF { 0 } , \mF { 1 } , \ldots , \mF { n } \in \cosimpdgcatA{n} \)
	%and a collection of morphisms~\(
	%	\mF { \mf[i_0 i_1 \cdots i_n] }
	%\)
	We initially note that the set of objects
	of~\( \catainftyfun[o]{ {\ordset{n}[linearize=\ringk]} , \cosimpdgcatA{n} } \)
	is the subset of the hom set~\( \Hom[\catdgcat]{ \Cob[plus]{}{\Bar[plus]{}{{\ordset{n}[linearize=\ringk]}}} , \cosimpdgcatA{n} } \)
	of functors satisfying the homotopy invertibility condition.
	Thus an object of this equalizer is a
	collection~\( \mF = \tup { {\mF[d=n]} }[\ordset{n}\in\catdelta[plus]] \)
	of dg-functors~\(
		\mF[d=n]
		\colon
		\Cob[plus]{}{\Bar[plus]{}{{\ordset{n}[linearize=\ringk]}}}
		\to
		\cosimpdgcatA{n}
	\)
	satisfying~\(
		\mvarphi[push] \circ \mF[d=n] = \mF[d=m] \circ \mvarphi[push]
	\)
	for all~\( \mvarphi\colon\ordset{n}\into\ordset{m} \)
	in~\( \catdelta[plus] \) (in an enriched sense, one may say that it consists of natural transformations between the \( \catdelta[plus] \)-diagrams~\( \Cob[plus]{}{\Bar[plus]{}{{\ordset{\smallbullet}[linearize=\ringk]}}}\to\cosimpdgcatA{*} \)).
	Now if~\( 0 \le i \le n \), we may consider
	the map \( \mvarphi \colon \ordset{0}\into\ordset{n} \) taking \( 0 \) to~\( i \). Then
	\(
		\mF[d=n]{ i }
		= \mF[d=n]{ \mvarphi[push]{ 0 } }
		= \mvarphi[push]{ {\mF[d=0]{ 0 }} }.
	\)
	Thus all the functors~\( \mF[d=n] \) are determined on the object level by the one object~\( \algM := \mF[d=0]{ 0 } \in \cosimpdgcatA{0} \).
	Similarly, on the morphism level, \( \Cob[plus]{}{\Bar[plus]{}{{\ordset{n}[linearize=\ringk]}}} \)~is freely generated by the morphisms~\( \mf[i_0 i_1 \cdots i_l] \colon i_0 \to i_l \)
	with~\( 0 \le i_0 < i_1 < \cdots < i_l \le n \). But if we
	consider the map~\( \mvarphi \colon \ordset{l} \into \ordset{n} \) given
	by~\( \mvarphi{j} = i_j \), we have
	\( \mf[i_0 i_1 \cdots i_l,smash] = \mvarphi[push]{ \mf[01\cdots l] } \).
	In other words, \( \mF \)~is determined on the object level by what it does to the non-degenerate morphism~\( \mf[01\cdots l] \in \Cob[plus]{}{\Bar[plus]{}{{\ordset{l}[linearize=\ringk]}}} \).
	Furthermore, these non-degenerate morphisms may be chosen freely.
	%; this follows, for instance, from the projective cofibrancy of~\( \simp[dg]{n} \) over~\( \catdelta[plus] \).
	Summarizing, the objects of this equalizer consist of the data of an element~\( \algM \in \cosimpdgcatA{0} \) and for each~\( n \) a morphism%\fxfatal{To see that these generate freely, one sees that \( \Delta^n \) is a pushout of stuff each of which has these free generators, so these generators lift freely (after applying~\( \lurie \)).}
	\[
		\mtheta[n]
		:=
		\mF[d=n]{ \mf[01\cdots n] }
		\colon
		\face[max=n,shpush]{ \algM }
		\longto
		\face[min=n,shpush]{ \algM }
		.
	\]
	The differential is as stated since this is the image of the differential on~\( \mf[01\cdots n] \).
	
	%where \( \face[min=n] \) and \( \face[max=n] \) are the unique
	%morphisms~\( \ordset{0} \into \ordset{n} \)
	%given by \( \face[min=n] { 0 } = 0 \) and \( \face[max=n] { 0 } = n \).
	%In other words,
	%\( \face[min=n] = \face[d=0] \face[d=0] \cdots \face[d=0] \)
	%and \( \face[max=n] =  \face[d=n] \face[d=n-1] \cdots \face[d=1] \).
	%We shall denote such pairs by~\( \tup { \algM , {\mtheta[ibullet]} } \).
	
	%On the morphism level, we claim the equalizer in question is in fact the full subcategory
	%of~\(
	%\Prod{\catainftyfun[o]{ {\ordset{n}[linearize=\ringk]} , \cosimpdgcatA{n}}}
	%\)
	%consisting of these objects
	
	On the morphism level, suppose
	%\( \tup { \algM , {\mtheta[ibullet]} } \)
	%and~\( \tup { \algN , {\meta[ibullet]} } \)
	\( \mF[d=n] \) and~\( \mG[d=n] \)
	are objects of the equalizer.
	%A morphism
	%\(
	%	\tup { \algM , {\mtheta[ibullet]} }
	%	\to
	%	\tup { \algN , {\meta[ibullet]} }
	%\)
	%consists of the
	%morphisms~
	A morphism~\( \mF[*] \to \mG[*] \)
	of degree~\( d \)
	consists of a
	collection~\(
		\malpha[*] = \tup { {\malpha[d=n]} }[\ordset{n}\in\catdelta[plus]]
	\)
	of morphisms \( \malpha[d=n] \colon \mF[d=n] \to \mG[d=n] \)
	in~\( \catainftyfun[o]{ {\ordset{n}[linearize=\ringk]},\cosimpdgcatA{n} } \) that simultaneously lie in the equalizer.
	%The space of morphisms~\( \mF[d=n] \to \mG[d=n] \) is\fxfatal{Yes, if \( \catainftyfun[o] \)~is a full subcategory of~\( \catainftyfun \).}
%	\[
%		\catainftyfun[o]{
%			{\ordset{n}[linearize=\ringk]},
%			\cosimpdgcatA{n}
%		} { {\mF[d=n]} , {\mG[d=n]} }
%		=
%		\coch[hoch]{*} {
%			{ \ordset{n}[linearize=\ringk,reduce] } ,
%			{ \cosimpdgcatA{n}[ symbolputleft=\!\!,leftrestrict=\mF[n],rightrestrict=\mG[n]] }
%		}
%	\]
%	and thus the morphism space of transformations~\( \mF[*]\to\mG[*] \) is
	From \cref{ex:A_infty_fun_k_n_to_B}, we therefore
	get that the space of non-identity transformations~\( \mF[*]\to\mG[*] \) is
	\begin{align*}
		\MoveEqLeft
		\End{
%			\coch[hoch,par=\big]{*} {
%				{ \ordset{n}[linearize=\ringk,reduce] } ,
%				{ \cosimpdgcatA{n}[symbolputleft=\!\!,leftrestrict=\mF[n],rightrestrict=\mG[n]] }
%			}
			\Hom[*,par=\big]{
				\tensorcoalg{{
					\ordset{n}[linearize=\ringk,reduce,return,shift=-1]
				}},
				\map{\mF[d=n]\times\mG[d=n]}[spar,qpull] {
					\cosimpdgcatA{n}
				}
			}
		}
%	\\
		=
		\End{
			\Hom[*,par=\big,
			%{\catcom{k}[diag=\ordset{n}\times\ordset{n}]}
			]{
				%\tensorcoalg{ {\ordset{n}[linearize=\ringk,reduce]} }
				\textstyle\dirsum[limits,l\ge0,return,operator] { {\ordset{n}[linearize=\ringk,reduce,return,tens=l,shift=-l]} }
				,
				{
%					\cosimpdgcatA{n}[
%						symbolputleft=\!\!,
%						leftrestrict=\mF[n],
%						rightrestrict=\mG[n]
%					]
					\map{\mF[d=n]\times\mG[d=n]}[spar,qpull] {
						\cosimpdgcatA{n}
					}
				}
			}
		}
	%\\
	%	&=
	%	\End[ \ordset{n}\in\catdelta[plus] ] {
	%		\Hom[\catdgcat] { \simp[dg]{n} ,  }
	%	}
	\\
		&=
		\End{
			\Prod[l\ge0] {
				\Prod[0\le i_0<\cdots<i_l\le n] {
					\Hom[*,\ringk,par=\big%,\catcom{k}
					]{
						\ringk
						\mf[i_{l-1} i_l]
						%\sotimes
						%\mf[i_{n-2} i_{n-1} ]
						\sotimes
						\cdots
						\sotimes
						\mf[ i_0 i_1 ]
						,
						{\cosimpdgcatA{n}[arg={
							{\mF[d=n] { i_0 } } ,
							{\mG[d=n] { i_l } }
						}]}
					 }
				}
			}
		}
		.
	\end{align*}
%	(note that
%	%\( \ordset{n}\to \cosimpdgcatA{n}[symbolputleft=\!\!,leftrestrict=\mF[n],rightrestrict=\mG[n]] \)
%	\(
%		\ordset{n}\mapsto
%		\map{ \mF[d=n]\times\mG[d=n] }[spar,qpull] {
%			\cosimpdgcatA{n}
%		}
%	\)~defines a semicosimplicial dg-quiver).
	%This means that \( \malpha \)~is determined freely\fxfatal{Add in details}
	%by what it does to 
	%As a cosimplicial quiver, \( \ordset{n}\mapsto \tensorcoalg { \ordset{n}[linearize=\ringk,reduce] } \) is freely generated
	Thus the transformation~\( \malpha[*] \) is freely determined
	by what it does to the
	non-degenerate elements~\( \mf[n-1,n] \sotimes \cdots \sotimes \mf[0,1] \),
	i.e.\ by the
	elements
	\[
		\malpha[d=n]{{ \mf[n-1,n] \sotimes \cdots \sotimes\mf[0,1] }}
		\in
		\cosimpdgcatA{n}[arg={
			{\mF[d=n] { i_0 } } ,
			{\mG[d=n] { i_n } }
		}]
		.
	\]
	Calling this element~\( \malpha[n] \),
	we obtain the desired description.
\end{proof}

\section{Homotopy descent of dg-schemes}

If \( \ringk \)~is a field, the category of \textdef{affine dg-schemes}
over~\( \ringk \)
is the opposite category~\( \catdgaff = \catdgalg[le0,com,spar,smash,op] \)
to the category of non-positively graded, graded commutative dg-algebras
over~\( \ringk \).
If \( \dgschemeX \)~is a dg-scheme,
the associated dg-algebra is denoted~\( \dgcatA[\dgschemeX] \).
The category of \textdef{quasi-coherent dg-sheaves}
on~\( \tup{ \dgschemeX , \dgcatA[\dgschemeX] } \)
is the category~\( \catxdgmod{\dgcatA[\dgschemeX]} \)
of \( \dgcatA[\dgschemeX] \)-dg-modules.

\begin{theorem}\label{res:A_infty_comodules}
	Suppose that
	\( \dgschemeX[1] \rightrightarrows \dgschemeX[0] \)
	is a groupoid in affine dg-schemes,
%	with
%	source and target maps~\( \face[d=0],\face[d=1] \)
%	both~flat,
	and consider the associated classifying space~\( \dgschemeX[ibullet] \) given by
	\[\textstyle
		\dgschemeX[n]
		=
		\dgschemeX[1]
		\fibre[\dgschemeX[0]]
		\dgschemeX[1]
		\fibre[\dgschemeX[0]]
		\cdots
		\fibre[\dgschemeX[0]]
		\dgschemeX[1]
		.
	\]
	Write~\( \cosimpdgcatA{n} = \dgcatA[\dgschemeX[n],smash] \)
	for the associated cosimplicial system of dg-algebras.
	Let~\( \dgcatA = \cosimpdgcatA{0} \)
	and~\( \dgcatC = \cosimpdgcatA{1} \),
	and note that \( \dgcatC \)~is a
	counital
	coalgebra in~\( \catxdgmodx{\dgcatA}{\dgcatA} \)
	via the map~\(
		\comult
		=
		\face[d=1,comor]
		\colon
		\dgcatC
		\to
		\dgcatC\tens[\dgcatA]\dgcatC
	\).
	Then we have an equivalence of dg-categories
	\[\textstyle
		\invholim[\catdelta]{
			%\der[dg,nopar]{
				\catdgqcoh{ \dgschemeX[ibullet] }
			%}
		}
		\cong
		\catxcomod{\dgcatC}[infty,hcu,formal]{\dgcatA}
		,
	\]
	where the \anrhs denotes the dg-category
	of formal, homotopy-counital \ainfty-comodules
	over~\( \dgcatC \)
	in~\( \catxdgmod{\dgcatA} \).
\end{theorem}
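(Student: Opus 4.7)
The plan is to compute the homotopy limit explicitly via \cref{res:homotopy_limit_dgcat} and then translate the resulting combinatorial data into an \ainfty-comodule structure, using the fact that the coalgebra~\( \dgcatC \) encodes, up to base change, the cosimplicial face structure on~\( \cosimpdgcatA{*} \).

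First, I would apply \cref{res:homotopy_limit_dgcat} to the cosimplicial dg-category~\( \ordset{n}\mapsto\catxdgmod{\cosimpdgcatA{n}} \). An object is then a pair~\( \tup{\algM,\mtheta} \), where~\( \algM\in\catxdgmod{\dgcatA} \) and~\( \mtheta[n] \)~is a map~\( \coface[max=n]{\algM}\to\coface[min=n]{\algM} \) of degree~\( 1-n \). Next, using \cref{res:base_change_face_maps} applied to the pullback squares~\( \dgschemeX[n]=\dgschemeX[i]\fibre[\dgschemeX[0]]\dgschemeX[j] \), together with the extension-of-scalars/restriction adjunction, each~\( \mtheta[n] \) corresponds bijectively to a map
\[
	\tilde{\mtheta}[n]
	\colon
	\algM
	\longto
	\dgcatC[tens=n]
	\tens[\dgcatA]
	\algM
\]
of~\( \dgcatA \)-modules of degree~\( 1-n \), where the tensor power of~\( \dgcatC \) uses the bimodule structure from~\( \face[d=0,comor] \) and~\( \face[d=1,comor] \). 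Setting~\( \ca[n+1]=\tilde{\mtheta}[n] \) and~\( \ca[1]=\dif[\algM] \) realizes the underlying graded datum of a formal \ainfty-comodule, the “formal” clause precisely reflecting that \cref{res:homotopy_limit_dgcat} produces a \emph{product}~\( \Prod[n]{\mtheta[n]} \) rather than a direct sum.

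The heart of the argument, and the main obstacle, is to check that equation~\eqref{eq:theta_n_equation} for~\( \mtheta \) translates, under this correspondence, to the \ainfty-comodule equation~\eqref{eq:ainfty_cocategory_equation} for~\( \tup{\ca[i]} \). The three types of terms on the right-hand side of~\eqref{eq:theta_n_equation} correspond to three types of contributions:
\begin{itemize}
	\item The quadratic term~\( \mtheta\circ\mtheta \), whose components combine~\( \mtheta[i] \) and~\( \mtheta[j] \) via the cosimplicial composition formula~\eqref{res:morphisms_in_tot_composition}, maps under adjunction to the terms~\( \map{\id[tens=i]\tens\ca[j]\tens\id}[spar]\ca[k] \) in which no internal comultiplication of~\( \dgcatC \) intervenes.
	\item The face-insertion sum~\( \Sum{(-1)^{n-i}\coface[d=i]{\mtheta[n-1]}} \) over~\( 0<i<n \) is, by \cref{res:add_comultiplication_map}, exactly the insertion of~\( \comult\colon\dgcatC\to\dgcatC\tens[\dgcatA]\dgcatC \) at the \( i \)th slot of~\( \dgcatC[tens={(n-1)}]\tens[\dgcatA]\algM \), producing the terms of~\eqref{eq:ainfty_cocategory_equation} involving~\( \comult[j] \) for~\( j\ge2 \).
	\item The internal differential~\( \dif[par]{\mtheta[n]} \) matches the terms involving~\( \comult[1]=\dif \).
\end{itemize}
The sign bookkeeping will require tracking the shift isomorphisms~\eqref{eq:coleibniz}--\eqref{eq:leibniz}; I expect this to be the most delicate calculation, but it is formal once one identifies each summand with the right slot in~\eqref{eq:ainfty_cocategory_equation}. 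The invertibility of~\( \mtheta[1] \) in~\( \co{0}{\cosimpdgcatA{1}} \) produces a homotopy inverse to~\( \map{\counit\tens\id}[spar]\ca[2] \), giving the homotopy-counital condition; conversely, any such homotopy inverse yields an invertible class in cohomology.

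Finally, I would run the same analysis on morphisms. A morphism~\( \malpha \) in the homotopy limit is, by \cref{res:homotopy_limit_dgcat}, a collection~\( \malpha[n]\colon\coface[max=n]{\algM}\to\coface[min=n]{\algN} \) of degree~\( \dgcatdeg{\malpha}-n \). Adjunction converts this to a collection~\( \mf[n+1]\colon\algM\to\dgcatC[tens=n]\tens[\dgcatA]\algN \), which is exactly the data of a morphism of formal \ainfty-comodules once one checks, by the same three-part decomposition as above, that the differential formula~\eqref{res:morphisms_in_tot_differential} and the composition formula~\eqref{res:morphisms_in_tot_composition} translate to the differential and composition in~\( \catxcomod{\dgcatC}[infty,hcu,formal]{\dgcatA} \) defined in the previous chapter. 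This last step reduces to the same combinatorial identities as in the object level, and hence produces a dg-equivalence between the two dg-categories.
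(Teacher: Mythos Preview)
Your proposal is correct and follows essentially the same route as the paper: apply \cref{res:homotopy_limit_dgcat}, pass through the adjunction using \cref{res:base_change_face_maps} and \cref{res:add_comultiplication_map}, and match the three kinds of terms in~\eqref{eq:theta_n_equation} to the \ainfty-comodule relations. Two small points where the paper is more explicit: it fixes the sign bookkeeping once and for all by the rescaling \( \malpha[tilde,n] = (-1)^{\dgcatdeg{\malpha}(n+1)}\malpha[n] \) (in particular \( \mtheta[tilde,n] = (-1)^{n+1}\mtheta[n] \), and \( \ca[1] = -\dif[\algM] \), not \( +\dif[\algM] \)), and for the homotopy-counitality it spells out that the \( n=2 \) case of~\eqref{eq:theta_n_equation} gives \( \face[d=0,shpull]{\mtheta[1]}\circ\face[d=2,shpull]{\mtheta[1]}\simeq\face[d=1,shpull]{\mtheta[1]} \), after which the argument is a homotopy version of \cref{res:cocycle_iff_degen=0} and the end of the proof of \cref{res:descent=comodules}.
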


\begin{proof}[Proof of \cref{res:A_infty_comodules}]
	\Cref{res:homotopy_limit_dgcat} gives us the general form
	of the homotopy limits of~\( \cosimpdgcatA{*} \),
	so we use the same notation.
%	By adjunction, the resulting
%	morphism~\(
%		\mtheta[n]
%		\colon
%		\face[max=n,shpull] { \algM }
%		\to
%		\face[min=n,shpull] { \algM }
%	\),
%	where \( n\ge 1 \),
%	is equivalent to a
%	map
%	\(
%		\algM
%		\to
%		\face[max=n,shpush] {{ \face[max=n,shpull] { \algM } }}
%		\xto{\mtheta[n]}
%		\face[max=n,shpush] {{ \face[max=n,shpull] { \algM } }}
%	\).
%	We add to this collection~\( \mtheta[0] = \dif[\algM] \),
%	the differential on~\( \algM \).
%	Putting~\( \ca[n] = \mtheta[n-1] \) for all~\( n\ge 1 \),
	Thus an object of the homotopy limit
	is a pair~\( \tup{\algM,\mtheta} \),
	where
	\( \mtheta = \tup{\mtheta[n]}[n\ge1] \)~is
	a tuple of maps~\(
		\mtheta[n]
		\colon
		\face[max=n,shpull] { \algM }
		\to
		\face[min=n,shpull] { \algM }
	\) of degree~\( 1-n \),
	and a morphism~\(
		\malpha
		\colon
		\tup{\algM,\mtheta}
		\to
		\tup{\algN,\meta}
	\)
	is a collection~\(
		\malpha = \tup{\malpha[n]}[n\ge0]
	\)
	of maps~\(
		\malpha[n]
		\colon
		\face[max=n,shpull]{\algM}
		\to
		\face[min=n,shpull]{\algN}
	\)
	of degree~\( \dgcatdeg{\malpha} - n \).
%	For a morphism~\(
%		\malpha
%		\colon
%		\tup{\algM,\mtheta}
%		\to
%		\tup{\algN,\meta}
%	\)
%	in the homotopy limit,
	For such a morphism,
	we scale it to define a new morphism~\( \malpha[tilde] \) by
	\(
		\malpha[tilde,n]
		=
		(-1)^{\dgcatdeg{\malpha}(n+1)}
		\malpha[n]
	\).
%	For an object~\( \tup{\algM,\mtheta} \),
%	we extend the collection~\( \mtheta = \tup{\mtheta[n]}[n\ge1] \)
%	by \( \mtheta[0] = \dif[\algM] \), the differential on~\( \algM \).
	Regarding this~\( \mtheta \) as a degree~\( 1 \)
	morphism~\( \tup{\algM,\mtheta}\to\tup{\algM,\mtheta} \),
	we use the same notation as for morphisms and
	write~\(
		\mtheta[tilde,n]
		=
		(-1)^{n+1}
		\mtheta[n]
	\) for~\( n\ge 1 \).
%	We also write~\( \dif[tilde,\algM] = - \dif[\algM] \).
	We notice that \eqref{res:morphisms_in_tot_composition} becomes
	\[
		\map{\mbeta\circ\malpha}[spar,widetilde,n]
		=
		\Sum[from={i=0},to={n}]{
			(-1)^{\dgcatdeg{\mbeta}i}
			\face[min=i,shpull]{{ \mbeta[tilde,n-i] }}
			\circ
			\face[max=(n-i),shpull]{{ \malpha[tilde,i] }}
		}.
	\]
	Thus
	\eqref{eq:theta_n_equation}~becomes
	%\[%\begin{equation}\label{eq:infinity_comod_theta_equation}
	\begin{align*}
		%\dif[par]{{\mtheta[tilde,n]}}
%		\MoveEqLeft
%		(-1)^{n}
%		(
%			\face[min=n,shpull]{{
%				%\mtheta[tilde,0]
%				\dif[tilde,\algM]
%			}}
%			-
%			\dif[i={\dgcatC[tens=n-1]}]
%			\tens
%			\id[\algM]
%		)
		(-1)^{n+1}
		\dif[ {\face[min=n,shpull]{\algM}} ]
		\circ
		\mtheta[tilde,n]
%		-
%		(-1)^{1-n}
%	\\
%		&
%		{}
		-
		\mtheta[tilde,n]
		\circ
%		(
%			\face[max=n,shpull]{{
%				%\mtheta[tilde,0]
%				\dif[tilde,\algM]
%			}}
%			-
%			\dif[i={\dgcatC[tens=n-1]}]
%			\tens
%			\id[\algM]
%		)
		\dif[ {\face[max=n,shpull]{\algM}} ]
%	\\
%		&
		=
		-
		\map{\mtheta\circ\mtheta}[widetilde,spar,n]
		+
		\Sum[from={i=1},to={n-1}]{
			%(-1)^{n-i+1}
			(-1)^{i}
			\face[d=i,shpull]{{ \mtheta[tilde,n-1] }}
		}
		.
	%\end{equation}
	%\]
	\end{align*}
	Since~\(
		\dif[par]{\malpha}[widetilde,n]
		=
		(-1)^{(\dgcatdeg{\malpha}+1)(n+1)}
		\dif[par]{\malpha}[n]
	\),
	\eqref{res:morphisms_in_tot_differential}~becomes
	\begin{align*}
			\dif[par]{\malpha}[widetilde,n]
			&
			=
			(-1)^{n+1}
			%\face[min=n,shpull]{{ \dif[tilde,\algN] }}
			\dif[ {\face[min=n,shpull]{\algN}} ]
			\circ
			\malpha[tilde,n]
%			-
			+
			(-1)^{\dgcatdeg{\malpha}}
			\malpha[tilde,n]
			\circ
			%\face[max=n,shpull]{{ \dif[tilde,\algM] }}
			\dif[ {\face[max=n,shpull]{\algM}} ]
%	\\
%			&
%			\qquad
%			{}
			+
			\map{\meta\circ\malpha}[widetilde,spar,n]
	\\
			&
			\qquad
			{}
			-
			(-1)^{\dgcatdeg{\malpha}}
			\map{\malpha\circ\mtheta}[widetilde,spar,n]
%	\\
%			&
%			\qquad
%			{}
			+
			\Sum[from={j=1},to={n-1}]
				(-1)^{j-1}
				\face[d=j,shpull] {{ \malpha[tilde,n-1] }}
	\\
			&=
%			(-1)^n
%			\face[min=n,shpull]{{ \dif[tilde,\algN] }}
			(-1)^{n+1}
			\dif[ {\face[min=n,shpull]{\algN}} ]
			\circ
			\malpha[tilde,n]
			+
			\Sum[from={i=0},to={n-1}]{
				(-1)^{i}
				\face[min=i,shpull]{{
					\meta[tilde,n-i]
				}}
				\circ
				\face[max=(n-i),shpull]{{
					\malpha[tilde,i]
				}}
			}
	\\
			&\qquad
			{}
			+
			(-1)^{\dgcatdeg{\malpha}}
			\malpha[tilde,n]
			\circ
			%\dif[tilde,\algM]
			\dif[ {\face[max=n,shpull]{\algM}} ]
			-
			\Sum[from={i=1},to=n]{
				(-1)^{\dgcatdeg{\malpha}(i+1)}
				\face[min=i,shpull]{{ \malpha[tilde,n-i] }}
				\circ
				\face[max=(n-i),shpull]{{ \mtheta[tilde,i] }}
			}
	\\
			&\qquad
			{}
			+
			\Sum[from={j=1},to={n-1}]
				(-1)^{j-1}
				\face[d=j,shpull] {{ \malpha[tilde,n-1] }}
	,\end{align*}

	Now for an object~\( \tup{\algM,\mtheta} \),
	we add to the collection~\( \tup{\mtheta[n]}[n\ge1] \)
	the element~\( \mtheta[0] = -\dif[\algM] \).
	We then
%	where~\(
%		\mtheta[n]
%		\colon
%		\face[max=n,shpull] { \algM }
%		\to
%		\face[min=n,shpull] { \algM }
%	\) is a map of degree~\( 1-n \).
%	and add to the collection~\( \mtheta = \tup{\mtheta[n]}[n\ge1] \)
%	%add to this
%	the
%	collection~
%	and write~\( \mtheta[0] = \dif[\algM] \),
%	the differential on~\( \algM \).
	define the comodule
	operations~\( \ca[n] \colon \algM \to \dgcatC[tens=(n-1)]\tens\algM \)
	of degree~\( 2 - n \)
	by
	\[\begin{tikzcd}%[sep=scriptsize]
		\ca[n]
		\colon
		\algM
		\ar[r]
		&
		\face[max=n-1,shpush] {{ \face[max=(n-1),shpull] { \algM } }}
		\ar[r,"{\mtheta[tilde,n-1]}"]
		&%[3em]
		\face[max=n-1,shpush] {{ \face[min=(n-1),shpull] { \algM } }}
	\end{tikzcd}\]
	where the first map is the unit of adjunction.
	In particular, \( \ca[1] = -\dif[\algM] \).	
	Similarly, if
	\( \malpha \colon \tup{\algM,\mtheta} \to \tup{\algN,\meta} \)
	is a map of degree~\( \dgcatdeg{\malpha} \),
%	where
%	\(
%		\malpha[n]
%		\in
%		\Hom[\cosimpdgcatA{n},d=\dgcatdeg{\malpha}-n]{
%			{\face[max=n,shpull]{\algM}},
%			{\face[min=n,shpull]{\algM}}
%		}
%	\),
	we associate to it
	the map of \ainfty-comodules~\(
		\mf \colon \algM \to \algN
	\)
	of degree~\( \dgcatdeg{\mf} = \dgcatdeg{\malpha} \),
	where
	\(
		\mf[n]
		\colon
		\algM
		\to
		\dgcatC[tens=(n-1)]\tens[\dgcatA] \algN
	\)
	is the map of degree~\(
		\dgcatdeg{\malpha} + 1 - n
		=
		\dgcatdeg{\mf} + 1 - n
	\)
	given by
	\[\begin{tikzcd}%[sep=scriptsize]
		\mf[n]
		\colon
		\algM
		\ar[r]
		&
		\face[max=n-1,shpush] {{ \face[max=(n-1),shpull] { \algM } }}
		\ar[r,"{\malpha[tilde,n-1]}"]
		&%[3em]
		\face[max=n-1,shpush] {{ \face[min=(n-1),shpull] { \algN } }}
		.
	\end{tikzcd}\]
	It is immediate that the composition map above
	agrees with the composition of maps of \ainfty-comodules.
%	Now the two lemmas show that the above two equations are exactly the
%	\ainfty-comodule equations.
	%Via the two lemmas,
	Via~\cref{res:add_comultiplication_map,res:base_change_face_maps},
	the other two equations above now become
	\begin{align*}
		\MoveEqLeft
		(-1)^{n+1}
		(
			\dif[\dgcatC\tens\cdots\tens\dgcatC]
			\tens
			\id[\algM]
			-
			\id[\dgcatC\tens\cdots\tens\dgcatC]
			\tens
			\ca[1]
		)
		\ca[n+1]
		+
		\ca[n+1]
		%\dif[\algM]
		\ca[1]
	\\
		&
		=
		-
		\Sum[from={i=1},to={n-1}]{
			(-1)^{i}
			( \id[ {\dgcatC[tens=i]} ]\tens\ca[n-i+1] )
			\ca[i+1]
		}
	\\
		&
		\qquad
		{}
		+
		\Sum[from={i=1},to={n-1}]{
			(-1)^{i}
			(
				\id[tens=(i-1)]
				\tens
				\comult[\dgcatC]
				\tens
				\id[tens=(n-i)]
			)
			\ca[n]
		}
	\\
	\shortintertext{and}
		\dif{\mf}[spar,n]
		&
		=
		(-1)^{n+1}
		(
			\dif[\dgcatC\tens\cdots\tens\dgcatC]
			\tens
			\id[N]
			-
			\id[\dgcatC\tens\cdots\tens\dgcatC]
			\tens
			\ca[1]
		)
		\mf[n+1]
		+
		\Sum[from={i=0},to={n-1}]{
			(-1)^{i}
			\ca[n-i+1] \mf[i+1]
		}
	\\
		&
		\qquad
		{}
		+
		(-1)^{\dgcatdeg{\malpha}}
		\mf[n+1]\ca[1]
		-
		\Sum[from={i=1},to={n}]{
			(-1)^{\dgcatdeg{\malpha}(i+1)}
			\mf[n-i+1] \ca[i+1]
		}
	\\
		&
		\qquad
		{}
		+
		\Sum[from={j=1},to={n-1}]{
			(-1)^{j-1}
			(
				\id[tens=(j-1)]
				\tens
				\comult[\dgcatC]
				\tens
				\id[tens=(n-j)]
			)
			\ca[n]
		},
	\end{align*}
	which are exactly the \ainfty-comodule equations from earlier.	
%	For all objects~\( \tup{\algM,\mtheta} \),
%	we now extend the collection~\( \mtheta = \tup{\mtheta}[n\ge1] \)
%	by~\( \mtheta[0] = \dif[\algM] \).
%	Following the above convention with~\( \mtheta[tilde,n] \),
%	this allows us to write~\( \mtheta[tilde,0] = - \mtheta[0] = \dif[tilde,\algM] \).
%	Then the two above equations become
%	\[
%		0
%		=
%		\Sum[from={i=0},to={n}]{
%			(-1)^{i}
%			\face[min=i,shpull]{{ \mtheta[tilde,n-i] }}
%			\circ
%			\face[max=(n-i),shpull] {{ \mtheta[tilde,i] }}
%		}
%		+
%		\Sum[from={i=1},to={n-1}]{
%			(-1)^{i-1}
%			\face[d=i,shpull]{{
%				\mtheta[tilde,n-1]
%			}}
%		}
%	\]
%	and
%	\begin{align*}
%			\dif{\malpha}[widetilde,spar,n]
%			&=
%			\Sum[from={i=0},to={n}]{
%				(-1)^{i}
%				\face[min=i,shpull]{{
%					\meta[tilde,n-i]
%				}}
%				\circ
%				\face[max=(n-i),shpull]{{
%					\malpha[tilde,i]
%				}}
%			}
%	\\
%			&
%			{}
%			-
%			\Sum[from={i=0},to=n]{
%				(-1)^{\dgcatdeg{\malpha}(i+1)}
%				\face[min=i,shpull]{{ \malpha[tilde,n-i] }}
%				\circ
%				\face[max=(n-i),shpull]{{ \mtheta[tilde,i] }}
%			}
%			+
%			\Sum[from={j=1},to={n-1}]
%				(-1)^{j-1}
%				\face[d=j,shpull] {{ \malpha[tilde,n-1] }}
%	\end{align*}

	To verify that we get exactly \emph{homotopy-counital} \ainfty-comodules,
	we notice that the
	equation~\eqref{eq:theta_n_equation}
	in the case~\( n = 2 \) yields that
	\[
		\dif[par]{\mtheta[2]}
		=
		\face[d=0,shpull]{\mtheta[1]}
		\circ
		\face[d=2,shpull]{\mtheta[1]}
		-
		\face[d=1,shpull]{\mtheta[1]}
		,
	\]
	so that we have a homotopy~\(
		\face[d=0,shpull]{\mtheta[1]}
		\circ
		\face[d=2,shpull]{\mtheta[1]}
		\simeq
		\face[d=1,shpull]{\mtheta[1]}
	\).
	Then one may adjust the proof of \cref{res:cocycle_iff_degen=0},
	replacing equalities by homotopies,
	shows that \( \mtheta[1] \)~being an isomorphism is equivalent to~\( \degen[d=0,shpull]{\mtheta[1]}\simeq\id \).
	Similarly, one adjusts the
	last part of the proof of \cref{res:descent=comodules}
	to obtain that~\(
		(\counit\tens\id[\algM])\ca[2]
		\simeq
		\id
	\),
	which means that \( \algM \)~is homotopy-counital.
\end{proof}

\endgroup

%\include{derivingends/derivingends_20}

% APPENDIX

%\appendix

%\include{category/category_6}

\setcounter{biburllcpenalty}{7000}
\setcounter{biburlucpenalty}{8000}

\printbibliography

\end{document}